\documentclass[a4paper,10pt]{article}
\usepackage[utf8]{inputenc}
\usepackage{ulem}
\usepackage{mathrsfs}

\textheight = 23.0cm
\textwidth = 16.0cm
\topmargin = -1.0cm
\oddsidemargin = 0cm

\baselineskip = 18 true pt

\usepackage{amssymb}
\usepackage{amsmath}
\usepackage{amsthm}
\usepackage{graphicx}
\usepackage[usenames]{color}
\usepackage{mathptmx}
\usepackage{amsmath,amsfonts,latexsym,graphicx,amssymb,url}
\usepackage{txfonts}
\usepackage{pifont,bbding,pxfonts,manfnt}
\usepackage{hyperref}
\setlength{\headheight}{15pt} \setlength{\topmargin}{10pt}
\setlength{\headsep}{30pt}
\numberwithin{equation}{section}

\newcommand{\dist}{\text{dist}\,} 

\newcommand{\di}{{\rm div}\,}

\newcommand{\sgn}{\text{{\rm sgn}}\,}

\newcommand{\cA}{ {\cal A}}
\newcommand{\cB}{ {\cal B}}
\newcommand{\cC}{ {\cal C}}

\newcommand{\cE}{ {\cal E}}
\newcommand{\cP}{ {\cal P}}

\newcommand{\cF}{ {\cal F}}

\newcommand{\cL}{ {\cal L}}
\newcommand{\cM}{{\cal M}}

\newcommand{\cS}{ {\cal S}}
\newcommand{\cX}{ {\cal X}}

\newcommand{\sM}{\mathscr{M}}

\newcommand{\vn}[1]{\left\Vert #1 \right\Vert}
\newcommand{\IP}[2]{\langle#1,#2\rangle}

\newcommand{\R}{{\mathbb R}} 
\newcommand{\bR}{\mathbb{R}}
\newcommand{\bC}{{\mathbb C}}
\newcommand{\N}{{\mathbb N}}
\newcommand{\bN}{{\mathbb N}}
\newcommand{\bZ}{\mathbb{Z}}
\def\bT{\mathbb{T}}

\def\Om{\Omega}

\def \d {\partial}

\def\de{\delta}

\def\Ltwod{\dot L^2}
\def\pHd{\dot H^2_{per}}
\def\pHa#1{\dot H^{#1}_{per}}
\theoremstyle{plain}
\newtheorem{theorem}{Theorem}[section]
\newtheorem*{theorem*}{Theorem}
\newtheorem{corollary}[theorem]{Corollary}
\newtheorem{lemma}[theorem]{Lemma}
\newtheorem{proposition}[theorem]{Proposition}
\newtheorem{definition}[theorem]{Definition}
\newtheorem{remark}[theorem]{Remark}
\definecolor{wineRed}{rgb}{0.7,0,0.3}
\newcommand{\PR}[1]{{\color{wineRed}#1}}

\definecolor{Teal}{rgb}{0.0,0.5,0.5}
\newcommand{\GW}[1]{{#1}}
\def\cC{\mathcal{C}}
\newcommand{\SE}{\mathcal{E}}

\title{Convergence of solutions to a convective Cahn-Hilliard type equation of the sixth order in case of small deposition rates}
\author{Piotr Rybka\thanks{Institute of Applied
Mathematics and Mechanics, the University of Warsaw, ul. Banacha 2,
02-097 Warsaw, PL {\tt rybka@mimuw.edu.pl}}, Glen Wheeler\thanks{School of Mathematics and Applied Statistics, University of Wollongong, Northfields Ave, 2522 NSW, Australia
{\tt glenw@uow.edu.au}}}

\begin{document}

\maketitle

\begin{abstract}
We show stabilisation of solutions to the sixth-order convective Cahn-Hilliard equation. {The problem} has the structure of a gradient flow perturbed by a quadratic destabilising term with coefficient $\delta>0$. Through application of
an abstract result by Carvalho-Langa-Robinson
we show that for small $\delta$
the equation has the structure of gradient flow in a weak sense. 
On the way we prove a kind of Liouville theorem for eternal solutions to parabolic problems. Finally, the desired stabilisation follows from a powerful theorem due to Hale-Raugel.
\end{abstract}

\bigskip\noindent
{\bf Key words:} stablization of solutions, gradient-type systems, convective Cahn-Hilliard type equation.

\bigskip\noindent
{\bf 2020 Mathematics Subject Classification.} Primary: 35B40, Secondary: 35K55

\section{Introduction}
In \cite{savina} Savina {\it et al.}\ 
a surface-diffusion based process describing the formation of quantum dots (and their faceting) on a growing crystalline hypersurface was considered.
By taking a thin-film approximation, the following model was proposed:
\begin{equation}\label{rna}
 \begin{array}{ll}
  h_t = \frac{\delta}{2}| \nabla h|^2 + \Delta (\Delta^2 h - 
 \di D_F W(\nabla h))& \hbox{in }\Omega \times\bR_+,\\
h(x, 0) = h_0(x)& \hbox{for } x \in \Om.
 \end{array}
\end{equation}
Here, $\Omega = \bT^d$, $d=1,2$ is a flat $d$-dimensional torus, i.e. $\bT^d = (\bR /L \bZ)^d$, where $L>0$.  
Geometrically, this imposes a periodic structure on the hypersurface.
We assume that  $W$ is a multi-well potential with the explicit form:
\begin{equation}\label{defW}
 W(v) = \frac 14(v^2-1)^2\quad\hbox{for }d=1,\qquad W(F_1,F_2)=  \frac{\alpha}{12}(F_1^4 +
F_2^4) + \frac{\beta}{2} F_1^2 F_2^2 -
\frac{1}{2}(F_1^2 + F_2^2),\quad\hbox{for }d=2,
\end{equation}
where $\alpha, \beta >0$ are anisotropy coefficients.

The evolution equation \eqref{rna} for $\delta=0$
is the $H^{-1}$-gradient flow for an analytic functional,
$\int_\Omega (\frac 12|\Delta h|^2 + W(\nabla h))\,dx$, and as such enjoys automatically a Lyapunov functional, with a number of powerful estimates as consequences.

The presence of the term $\frac{\delta}{2}| \nabla h|^2$ is known to be responsible for the introduction of spatio-temporal chaos called the Kardar-Parisi-Zhang instability, see \cite{KPZ}.
Its presence removes the gradient flow structure of the evolution equation \eqref{rna}, and as such, is the primary source of difficulty in the analysis of solutions.

There has been a concerted effort to study the parabolic problem (\ref{rna}) in the literature.
The steady states (in the case $d=1$) were studied in \cite{korzec-evans}.
In \cite{kory12,konary12} existence and uniqueness of weak solutions and a-priori estimates for $d=1$ and $d=2$ were established.
Existence of a global attractor in $H^2$ was shown in \cite{konary15} for the evolution of the derivative $u = \nabla h$, see (\ref{eqn:hcch}) and (\ref{u2D}) below, in the case of $d=1$ and $d=2$.
The existence of attractors for a similar problem was established in \cite{wangliu}.
Furthermore, in one space dimension the existence of time-periodic solutions and stability of traveling waves was studied in \cite{chhc-periodic} and \cite{TW-chhc}, respectively. More recent studies on higher-order Cahn-Hilliard equations include \cite{chl,miranville,paza,peng}.

In this paper we complete, for the first time, global analysis of solutions to \eqref{rna} for $\delta$ larger than zero.
We begin by observing that the average of any solution is a monotone function of time:
\begin{equation}
\label{rn-sr}
\frac{d}{dt}\int_\Omega h(x,t)\,dx
= \frac\de2 \int_\Omega |\nabla h(x,t)|^2\,dx \ge 0
\,.
\end{equation}
Since equality is achieved in \eqref{rn-sr} only when $h$ is constant, but we do not expect constants to be in any sense stable solutions of \eqref{rna}, this strongly suggests that the average of a solution can not be controlled.
Indeed, we expect that solutions converge to states with $|\nabla h|$ almost everywhere constant,
which in light of \eqref{rn-sr} indicates that asymptotically the average of
solutions should grow linearly in time.

This is why we factor out the influence of the average in our study of the asymptotic behavior of solutions.
To achieve this we consider not \eqref{rna} as it stands, but the spatial derivative of this system.
For $d=1$, solutions to the differentiated system enjoy an additional conservation law $\int_\Omega u\,dx =0$ resulting from the periodicity
of $h$. 
If we apply the gradient operator to both sides of \eqref{rna} and substitute $u = \nabla h$, then we arrive at
\begin{equation} \label{eqn:hcch}
\begin{array}{ll}
 u_t = \frac{\delta}{2}\nabla (u^2) + \Delta^2(\Delta u - W'(u)), & \hbox{in }
\bT^1\times \bR_+,\\
u(x,0) = u_0(x)& \hbox{for }x\in \bT^1,
\end{array}
\end{equation}
for $d=1$ while in the case where $d=2$ we obtain a system of equations:
\begin{equation} \label{u2D}
\begin{array}{ll}
 u_t = \frac{\delta}{2} \nabla |u|^2 + \Delta^3 u - \nabla\Delta\di D_u W(u_1,
u_2) & \hbox{in } \bT^2\times \bR_+\\
u(x,0) = u_0(x)& \hbox{for }x\in \bT^2.
\end{array}
\end{equation}
In \cite{konary15} the existence of a compact in $H^2$-attractor, $\cA_\de$, was
shown for both \eqref{eqn:hcch} and \eqref{u2D}.

The long time dynamics of $u$ appears to be quite sensitive to the magnitude of $\delta$.
Numerical simulations reported in \cite{konary15} 
suggest that if $\delta$ is small, then all solutions of \eqref{eqn:hcch} converge to a steady state as $t\rightarrow \infty$.
On the other hand, if $\delta$ is large, then computations show chaotic behavior of trajectories.

Our main result 
partially confirms that this is correct: we establish the existence of a $\GW{\delta_2}>0$ such that solutions to \eqref{eqn:hcch} for $\delta<\GW{\delta_2}$ converge to steady states as $t\rightarrow\infty$.
Remarkably, we are able to establish this for domains of any length outside a negligible set.
We are also able to deal with $L^2$ data, rougher than the natural energy space for the flow.
The formal statement is as follows:

\begin{theorem}\label{th-main}
There is a set $E\subset \mathbb{R}_+$ with zero Lebesgue measure and the following property. If $L\in \bR_+\setminus E$, then there exists a
positive ${\delta_2}$ 
such that for all $\delta\in(-\delta_2,\delta_2)$ and
any $u_0\in \Ltwod$ the unique solution to \eqref{eqn:hcch} with initial
data $u_0$ 
converges in $\pHa{k}$, for any $k\in \bN$, to a steady state $\bar u$ as $t\to \infty$. 
\end{theorem}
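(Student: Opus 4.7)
\medskip

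The plan is to treat \eqref{eqn:hcch} as a small non-gradient perturbation of the $\de=0$ flow, which is the genuine $H^{-1}$-gradient flow of the energy
\[
\SE(u) = \int_{\bT^1}\Big(\tfrac12|\nabla u|^2 + W(u)\Big)\,dx,
\]
and then to combine the abstract results of Carvalho--Langa--Robinson and Hale--Raugel. The passage from $\Ltwod$ data to the natural energy space is handled first: parabolic smoothing and the well-posedness results of \cite{konary12,konary15} place any $\Ltwod$ solution in $\pHa{k}$ for every $k$ after arbitrarily short positive time, and the orbit is eventually absorbed by the compact global attractor $\cA_\de\subset\pHd$. It therefore suffices to prove convergence for trajectories that already lie on $\cA_\de$.

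\medskip

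The next step is a structural analysis of the equilibrium set of the $\de=0$ problem. A steady state satisfies $\Delta u - W'(u)=\text{const}$ on $\bT^1$ of length $L$, a second-order autonomous ODE which is integrable by Hamiltonian phase-plane methods and which on the torus gives a finite family of translation-orbits (circles) of non-constant equilibria, branching off the trivial solution at a discrete sequence of critical lengths $L_n$. The exceptional set $E$ of the theorem is this sequence of bifurcation values together with any higher-order resonance values at which normal hyperbolicity of the equilibrium circles fails; it is at most countable, hence of zero Lebesgue measure. For $L\notin E$, the critical set of $\SE$ in $\pHd$ is a finite disjoint union of circles, each normally hyperbolic relative to the linearized $H^{-1}$-gradient flow.

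\medskip

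I would then invoke the abstract perturbation theorem of Carvalho--Langa--Robinson, whose output is that a small perturbation of a gradient flow having a Morse structure on its attractor remains \emph{gradient-like} in a weak sense: every complete bounded trajectory is a connecting orbit between persisted invariant sets, even when no global Lyapunov functional exists for $\de\neq 0$. The hypotheses to be verified are upper semi-continuity of the attractors $\cA_\de$ at $\de=0$, which follows from the a priori estimates of \cite{konary15}, and the absence of exotic complete bounded solutions of \eqref{eqn:hcch} lying outside the union of local unstable manifolds of the persisted equilibria. This second condition is the Liouville-type theorem announced in the abstract, and I expect it to be \emph{the} principal obstacle: since no monotone energy is available for $\de\neq 0$, one must replace the usual gradient argument by an estimate that combines the strong smoothing of the sixth-order operator with a backward-in-time Gronwall estimate, played off against the finite hyperbolic structure from the previous step, to rule out any hypothetical new eternal orbit.

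\medskip

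Once gradient-likeness on $\cA_\de$ with a finite family of normally hyperbolic equilibrium circles is established, the Hale--Raugel convergence theorem applies and forces every $\omega$-limit set to collapse to a single point on one of the persisted circles, giving convergence to a steady state $\bar u$ in $\pHd$. Upgrading to convergence in $\pHa{k}$ for every $k\in\bN$ then follows by combining $\pHd$-convergence with the uniform Sobolev bounds on the forward orbit supplied by parabolic smoothing and interpolation.
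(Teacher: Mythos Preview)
Your overall architecture matches the paper's: parabolic smoothing into the attractor, finite hyperbolic equilibrium structure at $\delta=0$, persistence of gradient-likeness via Carvalho--Langa--Robinson, convergence via Hale--Raugel, and interpolation for higher norms. Two substantive points differ from the paper and are worth flagging.

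First, your construction of $E$ is not the paper's. You describe $E$ as a countable set of bifurcation lengths; the paper instead parametrises periodic solutions of the second-order steady-state ODE by two integration constants $(c_1,c_2)$ ranging over an open bounded set $\cC$, shows that the period function $g_1$ and the mean-value function $g_2$ are \emph{analytic} on $\cC$, and defines $E$ to be the set of critical values of $g_1$ together with $\{2k\pi\}_{k\ge1}$. Sard's theorem gives $|E|=0$; countability is never claimed. For $L\notin E$ the level set $g_1^{-1}(L/2)$ is a compact one-dimensional analytic manifold, and the analytic function $g_2$ restricted to it has only finitely many zeros, yielding the finite family of equilibrium circles. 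This is a different and rather sharper route than a bifurcation count.

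Second, your sketch of the Liouville theorem (``smoothing plus backward Gronwall against hyperbolic structure'') misses the paper's actual mechanism. The paper works in a Hale--Raugel coordinate neighbourhood $U_{\rho,\eta}(u_0)$ of a point on an equilibrium circle, with the tube radius chosen as $\eta=\rho^\beta$, $\beta>1$. A \emph{waiting-time} estimate (Proposition~\ref{PNwait}) shows that any eternal solution within distance $\eta$ of the circle has $\|\xi_t\|_{\pHa{-4}}\le c\eta$, so it needs time $\Delta t\ge c\rho^{1-\beta}\to\infty$ to cross $U_{\rho,\eta}$. On this long time interval one applies the variation-of-constants formula forward to the stable component $\tilde y_1$ and backward (which is legitimate because $Y_2$ is finite-dimensional) to the unstable component $\tilde y_2$; both contract, forcing $\|\tilde y\|_{\pHa2}\le\tfrac12\rho^\beta$ on a central sub-interval, contradicting any assumed lower bound $\tfrac34\rho^\beta$. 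Your backward-Gronwall intuition covers only the $\tilde y_2$ half; the waiting-time input is what makes the forward contraction of $\tilde y_1$ complete before the trajectory exits the neighbourhood, and this interplay is the heart of the argument.
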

Here 
\[
\Ltwod = \bigg\{u\in L^2(0,L):\ \int_0^L u(s)\,ds =0\bigg\}\,,
\]
and
\[
\dot H^k_{per} = \bigg\{ u \in H^k(0,L)\cap \Ltwod\ :\ u(0) = u(L)\bigg\}\,,\quad \text{where $k\in\N$}\,.
\]

\bigskip

The $\delta<0$ case is stabilising and not physically relevant. However, the method we use yields the result for negative $\delta$ as well as positive $\delta$ with no additional effort, and so we have included both cases in our statement of Theorem \ref{th-main}.

For $\delta=0$, the parabolic PDE \eqref{eqn:hcch} is a gradient flow and convergence as above is expected (we also prove this in the present paper).
Clearly, for $\delta>0$ this property is lost.
However it remains intuitively tempting to consider for small $\delta$ the PDE \eqref{eqn:hcch} as being `almost' a gradient flow.
This intuition is behind our proof of Theorem \ref{th-main}.

Hale-Raugel's \cite{hale-raugel} powerful convergence result is also remarkably general, applying not only to gradient flows, but also to {\it gradient-like systems}.
To our great advantage here, Hale-Raugel \cite{hale-raugel} use a very weak notion of ``gradient-like system''.
Namely, they essentially require only that the $\omega$-limit set consists entirely of equilibria.
To show this, we 
apply a theorem of Carvalho-Langa-Robinson, cf.  \cite[Theorem 5.26]{robinson}, that is itself a structural-stability type result.
Hale-Raugel's theorem also requires that the kernel of the linearised operator at a steady state is one-dimensional. This rules out immediately the two-dimensional case, because then the kernel of the linearisation is at least two-dimensional. Moreover, when $d=1$ the
periods of the form $L=2\pi k$, $k\in \bN$ {are excluded,} because for such periods the kernel of the linearised operator is two-dimensional. Apart from this we do not know the structure of $E$, which is a result of an application of Sard's Theorem.

Now, let us give some more details on our method.
As mentioned above, for $\de=0$ the evolution equations \eqref{rna} and \eqref{eqn:hcch} do have a gradient flow structure, which we shall now explain.
Let us consider $\cF:V\rightarrow[0,\infty)$ an analytic functional over $V = \pHa3$, defined via
\begin{equation}
\label{eqncF}
\cF[u] = \frac12 \int_\bT |\nabla u|^2\,dx + \int_\bT W(u)\,dx\,.
\end{equation}
Let $H = (\pHa{2}(\bT))^*$ be equipped with the inner product
\[
(u,v)_{H} = ((-\Delta)^{-2}u,v)_{L^2(\bT)
}\,.
\]
Here we have used $(-\Delta)^{-1}$ to denote the inverse of the negative
Laplace operator, after fixing the mean to be zero, see \eqref{eqInvLap}. 
We note that $D\cF:H^3(\bT)\rightarrow H^{-3}(\bT)$.

The system \eqref{rna} for $\delta = 0$, that is
\begin{equation}\label{rnade0}
 \begin{array}{ll}
  u_t = \Delta^2 (\Delta u - W'(u))& \hbox{in }\bT
  \times\bR_+,\\
u(x, 0) = u_0(x)& \hbox{for } x \in \bT
\end{array}
\end{equation}
is equivalent to the $H$-gradient flow for the functional $\cF$. 
That is, a solution $z:[0,\infty)\rightarrow V$ to
\begin{equation}
\dot{z}(t) + \cF'(z(t)) = 0
\label{gradflow}
\end{equation}
generates a (weak) solution to \eqref{rnade0} by setting $u(x,t) = z(t)(x)$.
We note that the above setup implies that the mean of $u$ is preserved, and
that we could deduce 
(and we will do so in Corollary \ref{co-4.6}) existence of the
limit of $u$ as $t\to\infty$.

In Theorem \ref{t-count} and in  Corollary \ref{c-count} we prove, via a careful study of the problem and Sard's theorem, that there exists $L_0>0$ 
such that for $L\in[L_0,\infty)\setminus E$, 
where the exceptional set $E$ is as postulated in Theorem \ref{th-main},
we have finitely many 
non-trivial steady states of \eqref{rnade0} for $\de =0$.
Furthermore, we show that there exists at least one non-trivial steady state.
For $L\in(0,L_0)$ the only  steady state is the trivial one $v(x) = 0$.

While the trivial steady state is a singular isolated point in state space, the other equilibria are more interesting.
For a non-trivial steady state $v$, the shifted function $\tau_sv(x) = v(x+s)$ is in general not the same as $v$, yet remains a steady sate.
By periodicity in the domain, the action of $\tau$ on a non-trivial steady state generates a loop of steady states in the state space.
We define the family  $\cS$ 
to be:
\[
\cS =\{ \cE^i:\ i=0, 1,\ldots, n\}\,,
\qquad\text{where}\qquad
\cE^i = \{ \tau_s v^i\}_{s\in[0,L)},\qquad i=1,\ldots, n\,,
\]
and $v^i$,  $i=1,\ldots, n$ are the steady states assembled in Corollary \ref{c-count}.
In addition, we use $\cE^0 =\{0\}$ to represent the trivial steady state.

We show in Theorem \ref{t-1dke} that the kernel of the linearisation of
the right-hand-side of \eqref{rnade0} at a non-trivial steady state is one-dimensional. The only exception is the trivial steady state $v=0$.
There the kernel of the linearisation may be bigger or smaller depending
on $L$.
More precisely, if $L$ is not a multiple of $2\pi$, then there are positive and negative eigenvalues; if $L< L_0$ then there are only negative eigenvalues (and we have stability).
If $L$ is a multiple of $2\pi$ then the eigenspace is two-dimensional and  
our approach does not apply.
Furthermore, it is worth mentioning that if the dimension of the domain $d\ge2$, the eigenspace is again at least two-dimensional. These facts are the main reasons for our focus here on the one-dimensional flow \eqref{rnade0}.

For the singularly perturbed problem with $\delta>0$ we deduce (via Theorem \ref{t-sm-dep}) a similar picture: that for small $\de>0$ the steady state solutions to \eqref{eqn:hcch} form the same number of families:
$$
\cS^\de =\{ \cE^{\de,i}:\ i=0, 1,\ldots, n\},
$$
where
$$
\cE^{\de,i} = \{ \tau_s v^{\de,i}\}_{s\in[0,L)},\qquad i=1,\ldots, n,\qquad \cE^{\de,0}=\{0\},
$$
and $v^{\de,i}$,  $i=1,\ldots, n$ are the steady states. 
Moreover, we have that $\| v^{\de, i}- v^{0,i}\|_{H^6} \le C \de$. 
In proving this, we face a major difficulty here related to the shift invariance of the families $\cE^{\delta, j}$. This makes the linearisation of the  RHS of (\ref{rnade0}) singular. Thus, a direct use of 
the implicit function theorem is  impossible. We overcome this issue by freezing  the value of the steady states at one point.
This approach is made more complicated by the fact that the linearised operator is not self-adjoint. However, it turns out that it is 
spectrally equivalent to a self-adjoint operator; this last fact is crucial for our method.

Next, we may deduce by using the classical perturbation theory of Kato \cite{kato}
that for small $\de$ the kernel of the linearisation of the RHS of \eqref{eqn:hcch}
is one-dimensional for $v\ne0$, when $L\in [L_0,\infty)\setminus E$ or trivial in the case of $v=0$. 
This fact enables us to prove that zero is the only steady state of (\ref{eqn:hcch}) which is close to the zero equilibrium of (\ref{rnade0}) (c.f. Lemma \ref{co-dim0}).

As alluded to above, in order to deduce that equation \eqref{eqn:hcch} for $\de>0$, which is a perturbation of a gradient system \eqref{rnade0}, is a {\it gradient-like system} we use \cite[Theorem 5.26]{robinson} of Carvalho-Langa-Robinson.
For this purpose we introduce the machinery of analytic semigroup theory \cite{henry}.
We devote Section \ref{s-bare} to defining the fractional powers $H^\alpha$ of $L^2(\bT)$, $\alpha\in [0,1]$.
This task is particularly easy for flat tori. 
We also check there that the nonlinearity conforms to the requirements of the theory in \cite{henry} (see Lemma \ref{lem33}).

Since the original statement of \cite[Theorem 5.26]{robinson} is extensive, we devote most of Section \ref{s-co} to checking if \cite[Theorem 5.26]{robinson} is indeed applicable.  This task requires the existence of compact attractors $\cA_\de$ for $\de>0$ and a range of other properties of the semigroups for $\de>0$.
Once this task is complete, we may give the proof of Theorem \ref{th-main}, which is the content of Section \ref{S52}.

The main technical difficulty we must resolve here may be termed a Liouville-type or rigidity theorem for eternal solutions. It is the content of Theorem \ref{TMgap}. Roughly speaking, it says that there is a non-existence gap, that is, no eternal solution may be too close to any connected component of $\cS$. The proof involves an analysis of the time nearby trajectories take to move within a neighbourhood of an equilibrium, and with this, shows a definite reduction in the velocity of the eternal trajectory. This observation and the method of proof may be of independent interest.

\paragraph{A note on notation.} We study a one dimensional  spatially periodic problem. The partial derivatives of $u$ with respect to the variable $x$ from $\bT$ are denoted by  $\nabla u$, $u_x$, $u_{xx}$, $u_{xxx}$. However, in order to avoid overly complicated notation we write $\Delta^2 u$ for the fourth derivative  and  $\Delta^3 u$ for the sixth derivative. We additionally use the notation $\Delta_D$ for the operator $\Delta_D: D(\Delta_D) \to \dot L^2(\bT),$ where $D(\Delta_D)= H^2_{per}\cap H^1(0,L)$ and $\Delta_D u = u_{xx}$.

\section{Steady state solutions to \texorpdfstring{\eqref{eqn:hcch}}{(1.4)} for \texorpdfstring{$\de\ge0$}{delta greater than or equal to zero}}
\label{s-sss}

In this section our objective is two-fold: (1) to count the families of
equilibria and (2) to study the spectral properties of the linearisation about
each member of these families.
We first achieve this goal for $\de=0$, then we establish similar results for
small $\de>0$.
The first of the constraints on $\de$ comes from the implicit function theorem that 
we use to show that families of equilibria may be continuously
parametrised by $\de$.
In order to achieve this we need to account for the invariance of solutions
with respect to translation in the domain.
This is done by 
fixing one point (see Theorem \ref{t-sm-dep}).

\subsection{Analysis of equilibria for \texorpdfstring{$\delta=0$}{delta equal to zero}}\label{s-ss0}

We defined in \eqref{eqncF} the functional $\cF$. 
It is straightforward to check that $\cF$ is lower semicontinuous, coercive and
bounded from below.
Then, by the direct method of the calculus of variations, we deduce the
existence of at least one minimiser.
These are weak solutions to the Euler-Lagrange equation computed with respect
to the $H^{-2}$ topology, that is equation \eqref{rd0ss}.
We will revisit this later, when we discuss the gradient flow structure of \eqref{rnade0}.
Here, we will use ODE techniques, because they are more precise than the direct method of the calculus of variations, to study \eqref{rd0ss}.
In particular, we are interested in counting families of non-trivial solutions to \eqref{rd0ss}.

We proceed by reducing our sixth order equation to a more familiar second order
problem. Once we succeed we will be able to draw the phase portrait, inferring
bounds on solutions. We shall count the number of one-parameter families of solutions
and prove that zero is a simple eigenvalue for the linearised operator.

If we set $\de=0$, then the steady state equation for \eqref{eqn:hcch} in the
one-dimensional case becomes
\begin{equation}\label{rd0ss}
	\Delta^2( \Delta u - (W'(u))) =0\,,\qquad x\in[0,L),
\end{equation}
where $u$ is periodic and has zero average.

Eq. (\ref{rd0ss}) implies that there exist $a,b,c,c_1\in\R$ such that
$$
\Delta u - (W'(u)) = ax^3+bx^2+cx+c_1
\,.
$$
Since the solution is smooth, so is the LHS of the above equation.
This implies $a=b=c=0$, as the only smooth periodic polynomials are constant.
We summarise this in the following corollary.
\begin{corollary}
Any solution $u$ to \eqref{rd0ss} satisfies the following equation:
\begin{equation}\label{rd3}
 \Delta u = W'(u) + c_1\,,\qquad x\in[0,L)\,,
\end{equation}
for some $c_1\in\R$.  $\qed$
\end{corollary}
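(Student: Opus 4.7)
The plan is to integrate the sixth-order ODE directly. Setting $f(x) := \Delta u - W'(u)$, the equation \eqref{rd0ss} reads $\Delta^2 f = f'''' = 0$ on $[0,L)$, so the first step is simply to integrate four times in $x$, yielding $f(x) = ax^3 + bx^2 + cx + c_1$ for some real constants $a,b,c,c_1$.

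Next, I would argue that $f$ must be $L$-periodic. Since $u$ is a solution of \eqref{rd0ss} in the appropriate periodic class (elliptic regularity upgrades any weak solution in $\dot H^k_{per}$ to a smooth function on $\bT$), both $\Delta u = u_{xx}$ and $W'(u)$ are smooth periodic functions of $x$ with period $L$, and hence so is their difference $f$.

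The third step is the core of the argument: a polynomial of degree at most $3$ that is $L$-periodic must be constant. Indeed, $f(x+L)-f(x)$ is identically zero, but expanding gives a polynomial in $x$ whose leading terms force $a = b = c = 0$ in turn (one could, for instance, equate coefficients of $x^2$, $x^1$, $x^0$ in $f(x+L)-f(x)$, each vanishing identity forcing one of the coefficients to be zero). Hence $f \equiv c_1$, which is exactly \eqref{rd3}.

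There is essentially no obstacle here; the only small point to be careful about is the regularity/periodicity justification in step two, which relies implicitly on the fact that solutions of \eqref{rd0ss} are classical (smooth) periodic functions. Given this, the argument is a short and clean ODE integration. The constant $c_1$ will of course in general be nonzero; it plays the role of a Lagrange multiplier associated with the zero-average constraint, and will reappear in the subsequent phase-plane analysis.
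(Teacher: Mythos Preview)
Your proposal is correct and follows essentially the same approach as the paper: integrate $\Delta^2(\Delta u - W'(u)) = 0$ four times to obtain a cubic polynomial, then use periodicity (and smoothness) of $\Delta u - W'(u)$ to conclude the polynomial is constant. Your argument is slightly more explicit in justifying why a periodic polynomial must be constant, but the idea is identical.
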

Multiplication of \eqref{rd3} by $u_x$ and integration yields
\begin{equation}\label{rd4}
 \frac{1}{2} u_x^2 = W(u) + c_1 u + c_2\,,\qquad x\in[0,L)\,.
\end{equation}
We see that $c_2$ is a constant of motion. Any solution to \eqref{rd3} is
contained in a level set of
\begin{equation}
\label{Fdefn}
	F(u,p) = -\frac 12 p^2 + W(u) + c_1 u
	       = -\frac 12 p^2 + \frac14(u^2-1)^2 + c_1 u
\,.
\end{equation}
Since we require that our solutions are periodic, we examine only those level
sets of $F(u,p)$ which are closed curves. We must determine the set $\cC$ of parameters $c=(c_1,c_2)$ for which there exists a periodic solution to (\ref{rd4}). We make first a couple of simple observations.
\begin{figure}[t]
\begin{centering}
\includegraphics[width=8cm]{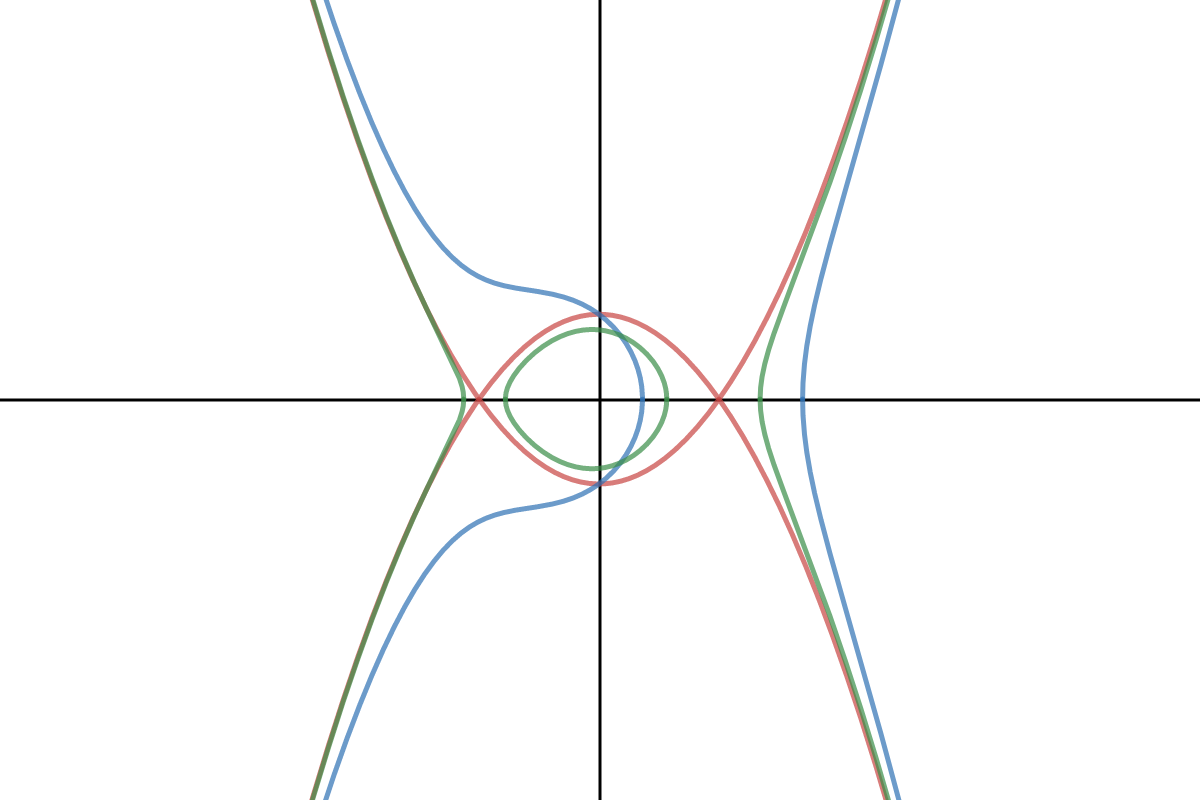}
\caption{In this figure several level curves for $F(u,p)$ (see \eqref{Fdefn}) are depicted: The parabolae for
$c_1=c_2=0$ ({\color{red}red}); the non-admissible open curves for $c_1=-1$,
$c_2=0$ ({\color{blue}blue}); and the level curves (two non-admissible
components, one admissible) for $c_1 = -\frac19$, $c_2 = -\frac16$
({\color{green}green}).}
\label{figlevelcurves}
\end{centering}
\end{figure}

We are interested in nontrivial, i.e. non-constant,  solutions $u(\cdot)$ to (\ref{rd4}), with zero mean over $[0,L)$. Thus, $u_m := \min u < 0  <\max u=: u_M$. Here is the first observation.

\begin{proposition}\label{pr-sym}
Let us suppose that $u$ is a solution to \eqref{rd4}, considered on $\Big[-\frac L2, \frac L2\Big]$ by periodicity and such that $u(0)= u_i$, where $i=m$ or $i=M$.
Then $u(-x) = u(x)$. 
\end{proposition}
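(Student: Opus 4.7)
The plan is to reduce this to a uniqueness statement for a second-order ODE. The key observation is that because $W$ is smooth, the periodic solution $u$ is smooth, so equation \eqref{rd3}, namely $u_{xx} = W'(u) + c_1$, is a genuine second-order ODE with a locally Lipschitz (in fact smooth) right-hand side, and hence standard Picard--Lindel\"of uniqueness applies once one prescribes $u(0)$ and $u_x(0)$.

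First I would verify that $u_x(0) = 0$. By assumption, $u(0) = u_m$ or $u(0) = u_M$, i.e.\ $u$ attains a (global) extremum at the interior point $x=0$ of $[-L/2, L/2]$; since $u$ is smooth, Fermat's theorem gives $u_x(0) = 0$. (One could alternatively read this directly from \eqref{rd4}, since at $x=0$ the energy identity forces $\frac12 u_x(0)^2 = W(u_i) + c_1 u_i + c_2$, and the assumption $u(0)$ is a turning point of $u$ on $[-L/2,L/2]$ fixes this quantity to be zero.)

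Next I would introduce $v(x) := u(-x)$ and check directly that $v$ also solves the second-order ODE \eqref{rd3}: indeed $v_{xx}(x) = u_{xx}(-x) = W'(u(-x)) + c_1 = W'(v(x)) + c_1$. Moreover $v(0) = u(0) = u_i$ and $v_x(0) = -u_x(0) = 0 = u_x(0)$, so $u$ and $v$ solve the same second-order ODE with identical Cauchy data at $x=0$. The uniqueness theorem for ODEs then yields $v \equiv u$ on the maximal interval of existence, which contains $[-L/2, L/2]$ since $u$ itself lives there, giving $u(-x) = u(x)$ as required.

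There is no serious obstacle. The only subtlety worth flagging is why we work with the second-order equation \eqref{rd3} rather than the first-order conservation law \eqref{rd4}: the latter has the form $u_x = \pm\sqrt{2(W(u) + c_1 u + c_2)}$, and uniqueness can fail precisely at the turning points $u = u_m, u_M$ where the radicand vanishes (this is exactly where the $\pm$ branch changes). Passing to the second-order formulation sidesteps this ambiguity cleanly, since the underlying vector field is smooth and the initial velocity $u_x(0) = 0$ is unambiguous.
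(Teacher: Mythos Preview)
Your proof is correct and follows essentially the same approach as the paper: both reduce to uniqueness for the second-order ODE \eqref{rd3} with initial data $u(0)=u_i$, $u_x(0)=0$, after observing that $v(x)=u(-x)$ solves the same equation with the same Cauchy data. Your version is in fact slightly cleaner, since the paper first rewrites \eqref{rd4} as a signed first-order equation before ultimately appealing to uniqueness for \eqref{rd3} anyway; you go directly to the second-order formulation and explicitly flag why this avoids the branch ambiguity at the turning points.
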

\begin{proof}
We will consider only the case $i=m$, the other one is the same. Since $u$ changes sign,  
there are
$x^-<0< x^+$ such that $u(x^-)= u(x^+)=0$.

In order to shorten the notation, for a parameter $c=(c_1,c_2)$  we shall write
\begin{equation}
\label{P_cdef}
P_c(u) =  2(W(u) + c_1 u + c_2)\,.
\end{equation}
Then, for $x\in ( x^-, x^+)$ equation \eqref{rd4} takes the form 
\begin{equation}\label{rn-rev}
\frac {du}{dx} = \sgn x \sqrt{P_c(u(x))},\quad u(0) = u_m\,.
\end{equation}
Let us consider
$u(-x)$. If we take its derivative, then we have,
$$
\frac {d}{dx}u(-x) = - \frac {du}{dy}(y) = - \sgn x \sqrt{P_c(u(y))} = \sgn(-x) \sqrt{P_c(u(-x))}\,.
$$
Since $u$ is a  unique solution to \eqref{rd3} with initial conditions $u(0)=
u_m$, $u'(0)=0$, and $u(-x)$ solves the same equation, we have $u(x) = u(-x)$ as desired. 
\end{proof}

\begin{remark}\label{rmk1}
Proposition \ref{pr-sym} implies that if we know the values of $u$ on $(x_m,
x_M)$ where $u(x_m)= u_m$, $u(x_M) = u_M$, then we can extend $u$ by reflection
across the endpoints of $(x_m, x_M)$.
\end{remark}

Our goal is to provide an estimate on the number of solutions, more precisely, since each solution to \eqref{rd0ss} remains a solutions after composition with the shift operator $\tau_s$, we bound the number of {\it one parameter families} of solutions. We shall see that they are defined solely in terms of $c_1$ and $c_2$.

\begin{proposition}\label{p-shift}
Let us suppose that $u_1$ and $u_2$ are two solutions to \eqref{rd4} with zero average
and periodic with principal period $L$, corresponding to the
same values of $c_1$ and $c_2$.
Then, there exists an $s\in[0,L)$ such that $u_1(x) = \tau_su_2(x)$ for all $x\in[0,L)$.
\end{proposition}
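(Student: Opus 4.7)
The strategy is to reduce everything to uniqueness for the initial-value problem associated with the second-order ODE \eqref{rd3}, $u_{xx} = W'(u) + c_1$, which both $u_1$ and $u_2$ satisfy with the \emph{same} constant $c_1$. Since $W'$ is smooth, this equation has a locally Lipschitz right-hand side, so two $C^2$ solutions that agree in value and first derivative at one point coincide identically. It therefore suffices to exhibit a point $x_0$ and a shift $s \in [0,L)$ for which $u_1(x_0) = u_2(x_0+s)$ and $u_{1,x}(x_0) = u_{2,x}(x_0+s)$: uniqueness will then force $u_1 \equiv \tau_s u_2$.

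The natural candidate for the matching point comes from the zero-mean hypothesis. Because each $u_j$ is non-constant with mean zero, continuity and periodicity yield some $x_j^0 \in [0,L)$ at which $u_j$ crosses zero with positive slope. Evaluating the first integral \eqref{rd4} at such a point gives
\[
    u_{j,x}(x_j^0)^2 \;=\; 2W(0) + 2c_2 \;=\; \tfrac{1}{2} + 2c_2,
\]
so $u_{j,x}(x_j^0) = +\sqrt{\tfrac{1}{2}+2c_2}$, a quantity depending only on $c_2$ and not on $j$. Hence the phase-plane states $(u_j,u_{j,x})(x_j^0)$ for $j=1,2$ are identical.

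Setting $s := (x_2^0 - x_1^0)\,\bmod\, L$ and $v(x) := \tau_s u_2(x) = u_2(x+s)$, the function $v$ solves \eqref{rd3} with the same $c_1$ and has initial data at $x_1^0$ identical to that of $u_1$. Uniqueness then delivers $u_1 \equiv v = \tau_s u_2$, as required. The single edge case worth flagging is the non-generic value $c_2 = -\tfrac{1}{4}$, where the right-hand side of the display above vanishes and increasing zeros degenerate; in that case I would instead match at minima, $u_j(x_j^m)=\min u_j$ with $u_{j,x}(x_j^m)=0$, and one must additionally verify that $\min u_1 = \min u_2$. This last point is the main obstacle, and is a short phase-plane argument: two closed orbits of \eqref{rd3} with the same $c_1$, $c_2$, principal period $L$, and zero mean must trace the same connected component of the level set $\{F=-c_2\}$, since different components would have distinct amplitudes and, generically, distinct periods, with zero mean singling out one component.
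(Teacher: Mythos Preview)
Your argument is correct and follows the same core strategy as the paper's proof: locate a point where the phase-plane states $(u_j,u_{j,x})$ coincide, then invoke uniqueness for the second-order ODE \eqref{rd3}. The paper phrases this slightly differently---it observes that equal $c_1,c_2$ force equal maxima and minima, then argues that among the points where $u_1$ and $u_2$ share a common value one can find matching derivatives from the phase portrait---but the substance is identical. Your choice of an increasing zero as the matching point is in fact more concrete and cleaner than the paper's presentation.

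One remark on the edge case you flag: it is vacuous. If $c_2=-\tfrac14$ then $P_c(0)=2(W(0)+c_2)=0$, so $0$ is a root of $P_c$ and hence an endpoint of the interval $[u_m,u_M]$ on which the closed orbit lives. A nontrivial periodic solution would then satisfy $u\ge 0$ or $u\le 0$ throughout, contradicting zero mean. Thus any periodic solution of \eqref{rd4} with zero average and principal period $L$ automatically has $c_2>-\tfrac14$, and your main argument covers all cases without the hand-wavy appendix about matching minima on distinct components.
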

\begin{proof}
When values of $c_1$ and $c_2$ for both solutions are the same, \eqref{rd4} implies
\[
\max \big\{x\in [0,L):\ u_1(x)\big\} = \max \big\{x\in [0,L):\ u_2(x)\big\}\,, \qquad
\min \big\{x\in [0,L):\ u_1(x)\big\} = \min \big\{x\in [0,L):\ u_2(x)\big\}\,.
\]
We can find points, where $u_1(x_j) = u_2(y_j)$, $j=1,\ldots n$, where $n\ge
2$. The values of derivatives $u_1'(x_j)$, $u_2'(y_j)$ are specified by the phase portrait of \eqref{rd4}. Due to periodicity, derivatives of $u_1$ and $u_2$ take on both positive and
negative values, thus we can find a point where $u'_1(x_k) = u'_2(y_k)$.

We take $s = y_k-x_k$. Since $u_1(\cdot)$ and $u_2(\cdot+s) = \tau_su_2(\cdot)$ satisfy the
same equation with the same initial conditions, by uniqueness for ODEs they must coincide.
\end{proof}

We wish to deduce some information about the behavior of $P_c$ on the orbits. We begin with a simple observation, whose easy proof based on the phase portrait is left to the reader.

\begin{lemma}\label{leposi}
If parameters $c=(c_1, c_2)$ are such that $P_c(u)\ge 0$ for all $u\in \bR$, then eq. (\ref{rd4}) has no periodic solutions. $\qed$
\end{lemma}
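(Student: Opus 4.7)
The strategy is to use that equation \eqref{rd4} is a first integral of the second order ODE \eqref{rd3}, together with uniqueness for second order ODEs. Concretely, I would argue by contradiction: suppose $u$ is a non-constant periodic solution to \eqref{rd4}, and derive that $u$ must in fact be constant, contradicting the assumption.

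First, I would extract what the sign condition $P_c\ge 0$ forces at an extremum. A non-constant periodic solution $u$ over $[0,L)$ attains a minimum $u_m=\min u$ at some $x_m$, and at this point $u_x(x_m)=0$. Substituting into \eqref{rd4} gives $P_c(u_m)=0$. But since $P_c(v)\ge 0$ for all $v\in\bR$ by hypothesis, $u_m$ is a global minimum of $P_c$ on $\bR$, and hence a critical point: $P_c'(u_m)=0$. Computing the derivative from \eqref{P_cdef} gives $P_c'(v)=2(W'(v)+c_1)$, so $W'(u_m)+c_1=0$.

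Now I would use \eqref{rd3}, which is the underlying second order ODE from which \eqref{rd4} was obtained by multiplication by $u_x$ and integration. The condition $W'(u_m)+c_1=0$ says precisely that the constant function $v(x)\equiv u_m$ is a solution of \eqref{rd3}. Moreover, $v$ and $u$ agree at $x_m$ together with their first derivatives: $v(x_m)=u_m=u(x_m)$ and $v_x(x_m)=0=u_x(x_m)$. By uniqueness of solutions to the second order ODE \eqref{rd3} (a smooth autonomous equation), $u\equiv u_m$ on $[0,L)$, contradicting the assumption that $u$ is non-constant.

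The argument is essentially routine once the right invocations of \eqref{rd4}, \eqref{rd3}, and ODE uniqueness are assembled; the main conceptual point — and arguably the only subtle step — is the observation that the sign hypothesis $P_c\ge 0$ promotes the simple zero of $P_c$ forced at an extremum by \eqref{rd4} into a double zero (critical point) of $P_c$, which is exactly what makes $(u_m,0)$ an equilibrium of the phase portrait rather than a turning point of a closed orbit. I do not anticipate any obstacle, and I would keep the proof to a short paragraph mirroring the phase-plane intuition already discussed in the paper preceding Lemma \ref{leposi}.
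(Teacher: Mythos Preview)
Your argument is correct and is precisely a rigorous rendering of the phase-portrait observation the paper alludes to (and leaves to the reader): the hypothesis $P_c\ge 0$ forces the zero of $P_c$ at an extremum of $u$ to be a double zero, so $(u_m,0)$ is an equilibrium of the planar system rather than a turning point of a closed orbit, and uniqueness for the second-order ODE \eqref{rd3} then forces $u$ to be constant.
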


Of course,  the derivative of $u$ vanishes at the extremal points, thus $P_c(u_m) = 0 = P_c(u_M).$ At the same time Lemma \ref{leposi} asserts that $P_c$ must attain negative values for $c$ corresponding to periodic solutions. However, if $u(\cdot)$ is a periodic solution to (\ref{rd4}), then $P_c(u(x))$ must be non-negative for all $x\in [0,L)$. In this way we deduce that eq. (\ref{rd4}) has a periodic solution, provided that the parameter $c$ is such that the polynomial $P_c$ has exactly four distinct real roots,
\begin{equation}\label{roots}
u_1 < u_m<0< u_M < u_2.
\end{equation} 
Moreover, $P_c(u) > 0$ for $u\in (u_1, u_m) \cup (u_M, u_2).$

We now prove that natural restrictions on the range of $c_1$ and $c_2$ arise.
\begin{lemma}\label{cy-admiss} There exists a monotone decreasing function 
$
r: [-\frac14, 0]\to [0,\frac 13 \sqrt{\frac23}]
$
such that if $u$ is a nontrivial periodic solution to (\ref{rd4}) 
with zero mean, then 
$(c_1,c_2)\in \cC=\{(c_1,c_2):\ |c_1| < r(c_2), c_2\in (-\frac14, 0)\}$.
In other words, for each $c$ from 
$\cC$, the polynomial $P_c$ 
has four distinct zeros.  If $c\in \d \cC$, then the  polynomial $P_c$ 
has a double root.

\end{lemma}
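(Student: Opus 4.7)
The plan is to characterize $\cC$ through the root structure of the quartic $P_c$, and to identify the boundary via the condition that $P_c$ develops a double root.

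First, I would exploit the symmetry $u \mapsto -u$, which maps $P_{(c_1,c_2)}$ to $P_{(-c_1,c_2)}$; this makes the admissible region symmetric about $c_1 = 0$, so it suffices to describe the $c_1 \ge 0$ boundary. Restricting to the slice $c_1 = 0$, $P_c(u) = \tfrac12(u^2-1)^2 + 2c_2$ has local minima of value $2c_2$ at $u = \pm 1$ and a local maximum of value $\tfrac12 + 2c_2$ at $u = 0$, hence has four distinct real roots with the middle two straddling $0$ iff $c_2 \in (-\tfrac14, 0)$. This pins down the $c_2$-range.

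The key step is to parametrize the double-root portion of the boundary. Imposing $P_c(u_0) = P_c'(u_0) = 0$ simultaneously and solving for $(c_1,c_2)$ in terms of $u_0$ gives
\[
c_1(u_0) = u_0 - u_0^3, \qquad c_2(u_0) = \tfrac34 u_0^4 - \tfrac12 u_0^2 - \tfrac14.
\]
For $c_1 > 0$ and $c_2 \in (-\tfrac14, 0)$ the relevant branch — the one on which the double root appears at the shallower (right-hand) local minimum of $P_c$ as $c_1$ is increased from zero — turns out to be $u_0 \in (\sqrt{2/3}, 1)$, with endpoints mapping to $(\tfrac13\sqrt{2/3}, -\tfrac14)$ at $u_0 = \sqrt{2/3}$ and $(0,0)$ at $u_0 = 1$. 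Differentiating on this interval gives $c_1'(u_0) = 1 - 3u_0^2 < 0$ and $c_2'(u_0) = u_0(3u_0^2 - 1) > 0$, so both $c_1$ and $c_2$ are strictly monotone in $u_0$, with opposite signs; inverting yields the desired strictly decreasing continuous $r:[-\tfrac14, 0] \to [0, \tfrac13\sqrt{2/3}]$ so that the positive-$c_1$ boundary is exactly $c_1 = r(c_2)$.

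The main obstacle is to confirm that the admissible region is really $\{|c_1|<r(c_2),\ c_2\in(-\tfrac14,0)\}$ and nothing smaller. I would argue by continuity and connectedness: the set on which $P_c$ has four distinct real roots with the middle pair straddling zero is open, contains the segment $\{0\}\times(-\tfrac14,0)$ by the first paragraph, and this structure can only be lost through either (a) a double root — which traces the curve $|c_1|=r(c_2)$ just parametrized — or (b) a middle root reaching $0$, which forces $P_c(0)=\tfrac12+2c_2=0$, i.e., $c_2=-\tfrac14$. Neither event occurs in the interior of the proposed region, so that region coincides with $\cC$, and on its (active) boundary curve $P_c$ has a double root by the very construction of $r$.
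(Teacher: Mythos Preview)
Your proof is correct and takes a genuinely different route from the paper's. The paper defines $r(c_2)$ implicitly as the supremum of $c_1>0$ for which the four roots $u_1<u_m<u_M<u_2$ remain distinct, and then argues monotonicity of $r$ by a qualitative perturbation argument on the roots (showing that increasing $c_2$ destroys the four-root structure at a smaller $c_1$). Only at the very end does the paper invoke the simultaneous conditions $P_c(u_0)=P_c'(u_0)=0$ to compute the single value $r(-\tfrac14)=\tfrac13\sqrt{2/3}$. You instead parametrize the entire discriminant branch explicitly by $u_0\in(\sqrt{2/3},1)$, obtaining $c_1(u_0)=u_0-u_0^3$ and $c_2(u_0)=\tfrac34u_0^4-\tfrac12u_0^2-\tfrac14$, from which monotonicity of $r$ follows by one-line derivative computations. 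Your approach is cleaner and more explicit, yielding the whole boundary curve and both endpoint values at once; the paper's argument is more qualitative and would transfer more readily to potentials $W$ other than the specific quartic. One small point worth making explicit in your write-up: you should note that within the strip $c_2\in(-\tfrac14,0)$ no other branch of the discriminant locus appears (the branches with $|u_0|<\sqrt{2/3}$ all have $c_2\le -\tfrac14$), so the curve $|c_1|=r(c_2)$ really is the only double-root obstruction inside the strip.
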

\begin{proof}
Let us consider first $c_1>0$ and take $u$ to be a nontrivial solution, then 
$u_M=\max_{x\in[0,L)}u(x) = u(x_M)>0$. Since $u_x(x_M) =0$, we deduce $c_2<0$ from \eqref{rd4}.
A similar argument applies for $c_1<0$. 
 
Now, if $c_2<-\frac 14$, then $W(u) + c_2<0$ on $(-1,1)$ and for no
$c_1\in \bR$ is there an interval $(a,b)$ containing zero such that $P_c(u)$ is
positive on it. Therefore we have $c_2\in [-\frac14,0)$.
 
In order
for solutions of \eqref{rd4} to form a closed curve (recall that $u$ has zero
average) $P_c(u)$ must have one positive and one negative root. 
We notice that for all $c_2\in (-\frac14, 0)$ the polynomial $W(u) + c_2 =P_{(0,c_2)}(u)$ has exactly four distinct real roots, see (\ref{roots}).
The roots (\ref{roots}) are continuous functions of the coefficients of polynomial $P_c(u)$. 
Moreover, we claim that they 
are  monotone functions of $c_2$ as long as $u_1< u_m <u_M<u_2$ and $c_1$ is fixed. We restrict our attention to the case $u_m \le 0\le u_M$ and $c_1>0$. Let us us pick $c_2$ and roots $u_i(c_2)$, $i=m,N,1,2$ of 
\begin{equation}\label{2gw}
    W(u_i) + c_1 u_i+ c_2 = 0 .
\end{equation}
We take $\bar c_2>c_2$, but close to $c_2$. Of course,
$$
W(u_i(c_2)) + c_1 u_i(c_2) + \bar c_2 =\bar c_2 - c_2>0,\qquad i=m,M,1,2.
$$
In addition,
$$
\frac{dW}{du}(u_i(c_2)) + c_1 >0,\quad\hbox{for } i=m,2
\qquad
\frac{dW}{du}(u_i(c_2)) + c_1 <0,\quad\hbox{for } i=M,1.
$$
This implies that
$$
W(u_i(c_2)) + c_1 u_i(c_2) + c_2 <0 \quad\hbox{ for }\quad\left\{
\begin{array}{l}
  u\in (u_i(c_2) - \epsilon, u_i(c_2)),\quad i=m,2; \\
  u\in (u_i(c_2) , u_i(c_2) + \epsilon),\quad i= M,1
\end{array}\right.
$$
for a small $\varepsilon>0.$ Hence, for $\bar c_2>c_2$ close to $c_2$ we have
$$
u_i(\bar c_2) \in (u_i(c_2) - \varepsilon, u_i(c_2)),\ i=m,2, \qquad
u_i(\bar c_2) \in (u_i(c_2), u_i(c_2) + \varepsilon),\ i=M,1.
$$
Hence, $u_m,$ $u_2$ are decreasing and $u_M$, $u_1$ are increasing functions of $c_2$.

For a given $c_2\in (-\frac14, 0)$, there exists $r(c_2)>0$ such that for $|c_1| > r(c_2)$ equation \eqref{2gw} has only two real roots. We define $r(c_2)$ by
$$
r(c_2) = \sup\{c_1>0: u_M(c_1,c_2) < u_2(c_1, c_2)\}\,. 
$$
Continuity of the roots imply that $u_M(r(c_2), c_2) = u_2(r(c_2), c_2)$.

Thus, for $|c_1| < r(c_2)$ the equation (\ref{2gw}) has exactly four distinct real roots $u_1 < u_m<0< u_M < u_2$ and for $|c_1| = r(c_2)$ equation (\ref{2gw}) has four real roots and one of them is a double root, i.e. $u_1 = u_m$ or $u_M = u_2$.

We notice that the function $r(\cdot)$ is decreasing. Indeed, let us take $\bar c_2>c_2$. Since $u_M(r(c_2), c_2) = u_2(r(c_2), c_2)$, then we see that for all positive $u$ we have
$$
W(u) + r(c_2) u + \bar c_2 \ge W(u_2(r(c_2), c_2)) + r(c_2)u_2(r(c_2), c_2) + c_2 +
(\bar c_2 - c_2) = \bar c_2 - c_2>0.
$$
Hence, $r(\bar c_2)$ must be smaller than $r(c_2)$.
In particular, this implies that $r(\cdot)$ is invertible.

We may compute values of $r(c_2)$ for $c_2=0$ and $c_2 = -\frac14$. In this way we will obtain an estimate on the set $\cC$. It is clear that since $W$ has two double roots at $u = -1$ and $u=1$, for any $c_1>0$ equation (\ref{2gw}) will have only negative roots, so $r(0) = 0$. 

Now, we compute $r(-\frac14)$. We look for a double root of 
$P_{(c_1,-1/4)}(u)$. After straightforward computations, we see that the conditions 
$P_c(u_0) = 0 = P_c'(u_0)$ yield $u_0 = \sqrt{\frac23}$ and $c=r(-\frac14) =\frac13\sqrt{\frac23}$.
\end{proof}

The existence of at least one closed orbit is clear for $c_1=0$. It will be formally proved in Theorem \ref{t-count} below.
However, the existence of a closed level curve for $F$ defined in (\ref{Fdefn}) does not immediately imply
existence of solutions to \eqref{rd4} with zero average and period $L$.

We note that for any $x_0\in [0,L)$ and $u(x_0)= u_0$ such that $P_c(u_0)\neq
0$ 
there is a unique
solution to \eqref{rd4} in a neighborhood of $x_0$ and satisfying
$u(x_0)= u_0$. 
As the equation is invariant under the action of the shift operator, we may
without loss of generality assume that $x_0=0$.
Let us suppose now that we are looking for solutions with
principal period $L>0$. 
Such a $u$ attains each of its values (except for maximum and minimum) exactly
twice. In other words if we set $u_0 = u_m$, then
$u(\frac12 L) = u_M$. Since we can use reflection to construct the solution on $(\frac12 L, L]$, we conclude that  $u(L) = u_m$.

As a result equation \eqref{rn-rev} implies that
\begin{equation}\label{rdL}
 \int_{u_m}^{u_M}\frac{du}{\sqrt{P_c(u)}} = \frac L2\,.
\end{equation}
Using equation  \eqref{rd4} we see that the zero average condition implies
\begin{equation}\label{rdm}
 \int_{u_m}^{u_M}\frac{u\,du}{\sqrt{P_c(u)}} = 0\,.
\end{equation}
The results we presented above may be summarized in the following statement, explaining the 
restrictions imposed on solutions to \eqref{rd4} by equations (\ref{rn-rev}) and (\ref{rdm}).
\begin{proposition}\label{c-perd}
A solution to (\ref{rd4}), $u$, corresponding to a parameter $c=(c_1,c_2)\in \cC$ 
is periodic with principal period $L(c)$ and zero mean if and only if (\ref{rdm}) holds. Moreover, the period $L(c)$ is given by (\ref{rdL}). 
\end{proposition}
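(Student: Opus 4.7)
The proposition is essentially a collection of consequences of the phase-plane analysis already set up in Lemma \ref{cy-admiss} and Proposition \ref{pr-sym}, so my plan is to reduce everything to two separation-of-variables computations on a half period. Given $c=(c_1,c_2)\in\cC$, Lemma \ref{cy-admiss} supplies four distinct roots $u_1<u_m<0<u_M<u_2$ with $P_c>0$ on $(u_m,u_M)$ and $P_c(u_m)=P_c(u_M)=0$. Hence the level set $\{F=0\}$ of \eqref{Fdefn} restricted to $u\in[u_m,u_M]$ is a closed, smooth, convex-like orbit in the $(u,p)$-plane, and any solution of \eqref{rd4} lying on this orbit is automatically periodic.

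Next I would exploit the symmetry. Given a nontrivial periodic solution with zero mean corresponding to parameter $c$, Proposition \ref{pr-sym} and Remark \ref{rmk1} allow me, after a translation (which does not affect either the period or the mean), to assume $u(0)=u_m$ and that $u$ is even with respect to both $0$ and $L/2$, attaining $u_M$ at $L/2$. On the half period $[0,L/2]$ the sign of $u_x$ is fixed and, as in \eqref{rn-rev}, $u_x=\sqrt{P_c(u(x))}$. Hence I can change variables from $x$ to $u$: $dx=du/\sqrt{P_c(u)}$. Integrating $dx$ from $x=0$ to $x=L/2$ gives
\[
\frac{L}{2}=\int_{u_m}^{u_M}\frac{du}{\sqrt{P_c(u)}},
\]
which is exactly \eqref{rdL}. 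This identifies the principal period as $L(c)$ in terms of $c$ alone.

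For the mean, I would write
\[
\int_0^L u(x)\,dx=\int_0^{L/2}u(x)\,dx+\int_{L/2}^L u(x)\,dx,
\]
apply the change of variable on $[0,L/2]$ (where $u_x=+\sqrt{P_c(u)}$) and on $[L/2,L]$ (where $u_x=-\sqrt{P_c(u)}$ by the symmetry around $L/2$), after which both halves produce $\int_{u_m}^{u_M}u/\sqrt{P_c(u)}\,du$. Thus
\[
\int_0^L u(x)\,dx=2\int_{u_m}^{u_M}\frac{u\,du}{\sqrt{P_c(u)}},
\]
so the zero-mean condition is equivalent to \eqref{rdm}. This establishes the forward implication. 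For the converse, given $c\in\cC$ and \eqref{rdm}, I would start the ODE \eqref{rn-rev} at $u_m$, use the existence of the closed orbit and ODE uniqueness to produce a periodic solution with principal period equal to $L(c)$ given by \eqref{rdL}, and then read the zero-mean property from the same change-of-variables identity.

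The only delicate point, and the one I would check most carefully, is the change of variables near the turning points $u_m,u_M$: the integrand $1/\sqrt{P_c(u)}$ is only integrably singular there because the zeros are simple (guaranteed by $c\in\cC$ via Lemma \ref{cy-admiss}, which identifies the boundary of $\cC$ with the locus where a root becomes double). This integrability is exactly what makes both \eqref{rdL} and \eqref{rdm} well defined and is the key structural consequence of staying strictly inside $\cC$; apart from that, the argument is a routine bookkeeping of signs and symmetry.
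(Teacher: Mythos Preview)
Your proposal is correct and follows essentially the same approach as the paper. In fact, the paper does not give a separate formal proof of this proposition at all: it presents the statement as a summary of the discussion immediately preceding it, which consists precisely of the half-period reduction via Proposition~\ref{pr-sym}/Remark~\ref{rmk1} and the separation-of-variables change $dx=du/\sqrt{P_c(u)}$ leading to \eqref{rdL} and \eqref{rdm}; your write-up simply spells these steps out more carefully, including the converse and the integrability at the simple turning points.
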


We next show that periodic solutions to (\ref{rd4}) exist, subject to a condition on their period.
\begin{theorem}\label{t-count}
There exist a positive $L_0$ and $E\subset[L_0, \infty)$ a set of Lebesgue measure zero such that:\\
(a) if $0<L<L_0$, then there is no solution to 
\eqref{rd4} satisfying (\ref{rdm}) with principal period $L$;\\
(b) if $L\in [L_0,\infty)\setminus E$, then the set
$$
\cP_{L}=\{(c_1,c_2)\in\cC
\,:\,\text{there is  a solution to 
\eqref{rd4} with principal period $L$, satisfying (\ref{rdm})
}\}
$$ 
is non-empty and finite. 
For $c\in \cP_L$ the principal period $L$ satisfies  (\ref{rdL}). 
\end{theorem}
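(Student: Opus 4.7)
The plan is to treat $L(c)=2\int_{u_m(c)}^{u_M(c)}du/\sqrt{P_c(u)}$ and the zero-mean integral $M(c):=\int_{u_m(c)}^{u_M(c)}u\,du/\sqrt{P_c(u)}$ as smooth functions on $\cC$ and to apply Sard's theorem to $L$ restricted to the one-dimensional locus $\cM:=M^{-1}(0)\cap\cC$. Lemma~\ref{cy-admiss} guarantees that on $\cC$ the roots $u_1<u_m<u_M<u_2$ are simple and depend analytically on $c$; after the standard substitution $u=\tfrac{u_m+u_M}{2}+\tfrac{u_M-u_m}{2}\sin\theta$ the integrands become smooth in $c$, so $L$ and $M$ are smooth on $\cC$. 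The reflection symmetry $(c_1,u)\mapsto(-c_1,-u)$ makes $M(0,c_2)=0$, so $\{c_1=0\}\cap\cC\subset\cM$; a direct computation using this symmetry shows that $\partial_{c_1}M(0,c_2)\neq 0$, so the implicit function theorem yields a smooth one-dimensional structure on $\cM$ near this segment.

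Next I would establish the boundary behaviour of $L$ on $\cM$. Whenever a point of $\cM$ approaches $\d\cC$ with an inner root meeting an outer root (i.e.\ $|c_1|\to r(c_2)$) or with all four roots pairing up at $\pm 1$ ($c_2\to 0^-$), $P_c$ acquires a double root and $L(c)$ diverges logarithmically, the orbit approaching a homoclinic/heteroclinic trajectory. The only other boundary approach available in $\cM$ is $c_1=0$, $c_2\to -\tfrac14^+$, where $u_m,u_M\to 0$; linearising \eqref{rd3} at $u=0$ gives $u_{xx}=-u$, whose period is $2\pi$, so $L(c)\to 2\pi$ in this degeneration. Setting $L_0:=\inf_\cM L$ we therefore get $L_0>0$ (in fact $L_0\ge 2\pi$), and $L|_\cM$ is proper from $L^{-1}([L_0+\eta,\infty))$ to $[L_0+\eta,\infty)$ for every $\eta>0$.

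Then I would apply Sard's theorem to $L|_\cM:\cM\to\bR_+$ to obtain a set $E$ of critical values with Lebesgue measure zero. For any $L^*\in[L_0,\infty)\setminus E$ the preimage $(L|_\cM)^{-1}(L^*)$ is a zero-dimensional submanifold of $\cM$; together with properness this gives compactness, hence finiteness. By Proposition~\ref{c-perd} this preimage coincides with $\cP_{L^*}$, so $\cP_{L^*}$ is finite. Non-emptiness follows from the intermediate value theorem applied to $L$ on the connected branch $\{c_1=0\}\cap\cM$, along which $L$ varies continuously from $2\pi$ (as $c_2\to-\tfrac14^+$) to $+\infty$ (as $c_2\to 0^-$); hence every $L^*>L_0$ lies in its image. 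Part~(a) follows immediately from the definition of $L_0$: no $c\in\cC$ can yield a periodic zero-mean orbit of period less than $L_0$, so by Proposition~\ref{c-perd} no solution to \eqref{rd4} with principal period $L<L_0$ exists.

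The main technical obstacle lies in the boundary asymptotics of the period integral, specifically the logarithmic divergence of $L$ at each double-root degeneration and the limit $L\to 2\pi$ in the small-amplitude case; both require a careful splitting of the integration interval and comparison with explicitly integrable model problems. A secondary issue is the potential presence of components of $\cM$ other than $\{c_1=0\}$; any such components would only contribute additional regular solutions, so they do not disrupt finiteness (Sard still applies) or non-emptiness (which is already guaranteed by the symmetric branch).
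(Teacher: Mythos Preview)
Your overall strategy is close in spirit to the paper's but reverses the order of the two reductions, and this reversal introduces a genuine gap. The paper applies Sard's theorem to $g_1(c)=L(c)/2$ as a map on the full two–dimensional open set $\cC$; for $L$ outside the resulting null set the level curve $g_1^{-1}(L/2)$ is a one–dimensional manifold, and then the \emph{analyticity} of $g_2=M$ restricted to this curve forces $g_2$ to have only finitely many zeros on each component (identically zero components being ruled out by linear independence of $g_1,g_2$). You instead fix the zero–mean locus $\cM=M^{-1}(0)$ first and then apply Sard to $L|_{\cM}$.

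The difficulty is that Sard's theorem needs the domain to be a smooth manifold, and you have not shown that $0$ is a regular value of $M$. Your argument that $\partial_{c_1}M(0,c_2)\ne 0$ covers only a neighbourhood of the symmetric segment $\{c_1=0\}$; the oddness of $M$ in $c_1$ gives $M(0,c_2)=0$ but says nothing about the nonvanishing of $\partial_{c_1}M$ there, and in any case this leaves other potential components of $\cM$ completely unaddressed. Your closing remark that such components ``do not disrupt finiteness (Sard still applies)'' is exactly where the argument breaks: if $\nabla M$ vanishes somewhere on $M^{-1}(0)$, that piece of $\cM$ need not be a manifold, and you cannot invoke Sard on $L|_{\cM}$ there. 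The paper's ordering sidesteps this entirely, since $\cC$ is always a manifold; the price it pays is the need for analyticity (not mere smoothness) of $g_2$ to force isolated zeros on the level curves of $g_1$. If you wish to keep your ordering, you would need either to prove that $0$ is a regular value of $M$ on all of $\cC$, or to upgrade to analyticity and argue via the structure of real–analytic varieties—neither of which is in your outline.

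A smaller point: your properness discussion covers the double–root degenerations and the small–amplitude limit $c_1=0,\ c_2\to-\tfrac14^{+}$, but says nothing about approach to the boundary segment $c_2=-\tfrac14$ with $c_1\ne 0$; the paper likewise has to allow level–curve endpoints there, and handles it by noting that only this piece of $\partial\cC$ can meet $g_1^{-1}(L/2)$.
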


Before we prove this result we make some remarks. First, we note that for any $L$ the zero function is a (trivial) solution to (\ref{rd0ss}) hence \eqref{rd4}.
But $L$ is not the (principal) period of any constant function. 
Second, this theorem implies the existence of multiple families of solutions if $L = kL_0$, where $k\in\{2,3,\ldots\}$.

\begin{corollary}\label{c-count} Let us suppose that $L\in [ L_0,\infty)\setminus E$, where $L_0$ and $E$ are defined in the previous theorem.
Set $\mathscr{S}  = \{L/k\,:\, k\in\bN\,,\ L/k\text{ satisfies Theorem \ref{t-count} and}\ L/k\ge L_0\}$.
Then, the number of one-dimensional families of solutions to \eqref{rd4} with zero mean is bounded by the finite cardinality of 
$$
\bigcup_{\{L/k\ \in\ \mathscr S\}} \cP_{L/k}
\,.
$$
\end{corollary}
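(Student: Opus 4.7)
The plan is a direct counting argument keyed to the unique principal period carried by each non-trivial periodic solution. Any solution $u$ to \eqref{rd4} of period $L$ with zero mean has some principal period $\tilde L>0$, and periodicity forces $L = k\tilde L$ for some $k\in\bN$, so $\tilde L = L/k$. Part (a) of Theorem \ref{t-count} then excludes $\tilde L < L_0$, restricting $k$ to the finite range $1 \le k \le L/L_0$.

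For each admissible $k$ with $L/k\in\mathscr{S}$, Proposition \ref{p-shift} shows that two solutions with principal period $L/k$ sharing the same $(c_1,c_2)$ differ only by a translation $\tau_s$; hence the one-parameter families with principal period $L/k$ are in bijection with $\cP_{L/k}$. By Theorem \ref{t-count}(b), each $\cP_{L/k}$ is finite. Moreover the sets $\cP_{L/k}$ for distinct $k$ are automatically disjoint, since $(c_1,c_2)$ together with the quadrature \eqref{rdL} determines the principal period. Summing over the finitely many admissible $k$ with $L/k\in\mathscr{S}$ yields the claimed bound
\[
\#(\text{families}) \;\le\; \Big|\bigcup_{L/k \in \mathscr{S}} \cP_{L/k}\Big| \;=\; \sum_{L/k\in\mathscr{S}} |\cP_{L/k}| \;<\; \infty.
\]

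The main (mild) obstacle is that the hypothesis $L\notin E$ does not by itself guarantee $L/k\notin E$ for each admissible $k$, so Theorem \ref{t-count}(b) might fail to apply for some $k$ and the corresponding principal period would a priori escape the bound. I would circumvent this by the standard enlargement $\tilde E := \bigcup_{k\in\bN} kE$, which remains of zero Lebesgue measure as a countable union of null sets. Absorbing $\tilde E$ into the original exceptional set produced by Sard's theorem ensures that whenever $L \notin \tilde E$, every admissible ratio $L/k$ automatically satisfies $L/k \notin E$ and so belongs to $\mathscr{S}$, closing the argument without losing any solution.
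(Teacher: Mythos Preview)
Your argument is essentially the paper's own proof, just spelled out in more detail: reduce to the principal period $L/k$, use part (a) of Theorem~\ref{t-count} to bound $k$, and part (b) to bound each $\cP_{L/k}$. The one substantive addition is your last paragraph, and it is well taken: the paper's proof silently assumes that Theorem~\ref{t-count}(b) applies to every relevant $L/k$, which is not guaranteed by $L\notin E$ alone. Your fix of replacing $E$ by $\tilde E=\bigcup_{k\in\bN}kE$ (still null) is exactly the right repair and makes the statement clean; with that absorption every admissible $L/k$ lands in $\mathscr{S}$ and the counting goes through as written.
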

\begin{proof}
If for $c\in \cC$ a function $u$ is a periodic solution to \eqref{rd4} with zero mean and period $L$, then its principal period is of the form $L/k$ for a natural number $k$. Hence, $c \in \cP_{L/k}$. By Theorem \ref{t-count} the cardinality of the set $\cP_{L/k}$
is finite and 
the number $L/k$ must be at least $L_0>0$.
\end{proof}

The statement of Theorem \ref{t-count} implies that if $L\in [L_0,\infty)\setminus E$, then all solutions to (\ref{rd3}) satisfying (\ref{rdm}) with period $L$ form a family $\cS$, where
$$
\cS=\{ \cE^{0,i}: i=0,\ldots, n \}
$$
where 
$n= n({L}),$ 
$\cE^{0,0}=\{0\}$ and
$$
\cE^{0,i} = \{\tau_x u^i\}_{s\in [0,L)},\qquad i=1,\ldots, n,
$$
where $u^i \equiv u^{c_i}$, and $c_i \in \bigcup_{\{L/k\ \in\ \mathscr S\}} \cP_{L/k}$.

We prove our theorem up to a negligible set $E$ of periods. On the one hand this is a deficiency of our method arising from the application of Sard's Theorem. On the other hand we explicitly exclude from our considerations the integer multiples of $2\pi$, because of the poor behavior of the linearised operator. As a result, we conclude that the negligible set $E$ is not empty, but at the moment we do not know any good estimate of its size.

The proof of Theorem \ref{t-count} is based on a series of lemmata. The first one establishes the character of dependence of the period on parameters $c=(c_1,c_2)\in \cC$.
\begin{lemma}\label{analiticity}
The functions in formulas (\ref{rdL}) and (\ref{rdm}) are analytic in  $\cC$, i.e. $g_1$ and $g_2$ defined below, are analytic. Moreover, $L_0 = \min_{c\in \cC} g_1(c)>0.$
\end{lemma}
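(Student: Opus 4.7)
The plan is to remove the $1/\sqrt{\,\cdot\,}$ endpoint singularities in (\ref{rdL}) and (\ref{rdm}) by a trigonometric substitution which fixes the interval of integration. This will reduce both integrals to integrals of integrands that are jointly analytic in parameter and integration variable on a compact set, whence analyticity of $g_1$ and $g_2$ follows by differentiation under the integral sign. The positive lower bound for $L_0$ then follows from a direct comparison, without any refined boundary analysis.

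First I would establish the analytic dependence of the four roots of $P_c$ on the parameter. By Lemma \ref{cy-admiss}, for $c=(c_1,c_2)\in\cC$ the quartic $P_c$ has four distinct simple real roots $u_1(c) < u_m(c) < u_M(c) < u_2(c)$. Since the coefficients of $P_c$ are polynomial (hence analytic) in $c$ and each root is simple, the implicit function theorem applied to $P_c(u)=0$ at each root yields the analyticity of the maps $c\mapsto u_j(c)$ on $\cC$. Then factor
\[
P_c(u)=(u-u_m)(u_M-u)\,Q_c(u),\qquad Q_c(u)=\tfrac{1}{2}(u-u_1(c))(u_2(c)-u),
\]
where $Q_c$ is strictly positive on $[u_m(c),u_M(c)]$ and jointly analytic in $(c,u)$.

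Next, apply the substitution
\[
u(\theta,c)=\frac{u_m(c)+u_M(c)}{2}+\frac{u_M(c)-u_m(c)}{2}\sin\theta,\qquad \theta\in[-\pi/2,\pi/2],
\]
so that $(u-u_m)(u_M-u)=\bigl(\tfrac{1}{2}(u_M-u_m)\bigr)^2\cos^2\theta$ and $du=\tfrac{1}{2}(u_M-u_m)\cos\theta\,d\theta$. The singular factors cancel exactly, leaving
\[
g_1(c)=2\int_{-\pi/2}^{\pi/2}\frac{d\theta}{\sqrt{Q_c(u(\theta,c))}},\qquad g_2(c)=\int_{-\pi/2}^{\pi/2}\frac{u(\theta,c)\,d\theta}{\sqrt{Q_c(u(\theta,c))}}.
\]
Both integrands are jointly analytic in $(c,\theta)$ on $\cC\times[-\pi/2,\pi/2]$ and, on each compact subset of $\cC$, bounded uniformly in $\theta$. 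Analyticity of $g_1$ and $g_2$ on $\cC$ now follows by differentiation under the integral sign (equivalently, by a Morera argument on complex extensions).

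For the bound $L_0>0$, use the crude pointwise estimate $(u-u_1)(u_2-u)\le (u_2-u_1)^2$ valid for $u\in[u_m,u_M]\subset[u_1,u_2]$, which gives $P_c(u)\le \tfrac{1}{2}(u_2-u_1)^2(u-u_m)(u_M-u)$. Hence
\[
g_1(c)\ge \frac{\sqrt{2}}{u_2(c)-u_1(c)}\cdot 2\int_{u_m}^{u_M}\frac{du}{\sqrt{(u-u_m)(u_M-u)}}=\frac{2\sqrt{2}\,\pi}{u_2(c)-u_1(c)}.
\]
By Lemma \ref{cy-admiss} the set $\cC$ is bounded, hence the coefficients of $P_c$ are uniformly bounded on $\cC$, and therefore so are the extreme roots: comparing with the leading term $\tfrac12 u^4$ one finds a constant $K$ (in fact $K=4$ suffices) with $u_2(c)-u_1(c)\le K$ for all $c\in\cC$. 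This gives $L_0\ge 2\sqrt{2}\,\pi/K>0$.

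The main obstacle I expect is ensuring that the change of variables eliminates the endpoint singularities in a manner preserving joint analyticity; this relies on the simplicity of the inner roots $u_m(c),u_M(c)$ throughout $\cC$, which is precisely what Lemma \ref{cy-admiss} provides. Once this is secured, both analyticity and the positive lower bound follow cleanly, with no need for a refined analysis of the behavior of $g_1$ near $\partial\cC$.
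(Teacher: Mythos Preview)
Your proof is correct and follows the same overall strategy as the paper: desingularise the endpoints by a change of variable, then invoke analyticity of the simple roots $u_1,u_m,u_M,u_2$ as functions of $c$ to conclude analyticity of the transformed integrand, and finally bound the residual quadratic factor $Q_c$ to obtain $L_0>0$. The difference is in the choice of substitution: the paper uses the Euler substitution $y=\sqrt{(u-u_m)/(u_M-u)}$, which maps $[u_m,u_M]$ to $[0,\infty)$, whereas you use the trigonometric substitution onto the compact interval $[-\pi/2,\pi/2]$. Your choice is arguably cleaner, since compactness of the new domain makes the differentiation-under-the-integral-sign step immediate without any separate integrability check at infinity; the paper handles that via an explicit dominated-convergence argument. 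For the lower bound, the paper bounds $\max Q$ by showing $u_2<2$, while you bound $u_2-u_1$ uniformly; these are equivalent maneuvers.

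One small slip: in your transformed expression for $g_1$ the factor of $2$ in front of the integral should not be there (the Jacobian cancels exactly against $\sqrt{(u-u_m)(u_M-u)}$, leaving $g_1(c)=\int_{-\pi/2}^{\pi/2}d\theta/\sqrt{Q_c}$). This propagates to your lower bound, but since you only need positivity it is harmless.
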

\begin{proof}
For all points $(c_1, c_2)\in \cC$
(recall Lemma \ref{cy-admiss})
we set
$$
g_1(c_1,c_2):= \int_{u_m}^{u_M}\frac{du}{\sqrt{P_c(u)}} \quad\hbox{ and }
\quad
g_2(c_1,c_2):= \int_{u_m}^{u_M}\frac{udu}{\sqrt{P_c(u)}}.
$$
We will show that both functions $g_1$ and $g_2$ are not only continuous, but also analytic.

We are now going to establish continuity of $g_1$.
In order to do this 
we will simplify the integral defining $g_1$ by a change of
variable. For this purpose we factor $P_c$. Namely,
\[
P_c(u) = \frac{\sqrt 2}4(u_M - u)(u-u_m)(u_2 - u)(u-u_1),
\]
where 
\begin{equation}\label{rsta}
u_1 < u_m<0< u_M < u_2
\end{equation}
and the products of the first two factors and the last two are positive for $u\in (u_m, u_M)$.

We apply one of the Euler substitutions:
let us set
$$
y = \sqrt{\frac{u-u_m}{u_M -u}}.
$$
Then, $dy = \frac1{2y} \frac{u_M-u_m}{(u_M-u)^2} \,du$ and
$$
\int_{u_m}^{u_M}\frac{du}{\sqrt{P_c(u)}} = 
4 \sqrt 2 \int_0^\infty \frac{dy}{(1+y^2)\sqrt{Q(u(y)))}} =: g_{1}(c),
$$
and
$$
\int_{u_m}^{u_M}\frac{u\,du}{\sqrt{P_c(u)}} = 
4 \sqrt 2 \int_0^\infty \frac{u(y)\,dy}{(1+y^2)\sqrt{Q(u(y)))}} =: g_{2}(c),
$$
where $Q(u) = (u_2 - u)(u-u_1)$ and
\begin{equation}\label{rnu}
u (y) = \frac{u_m + u_My^2}{1+y^2}\,.
\end{equation}
Thus, in order to obtain a lower bound on $g$ it is sufficient to find an upper bound on $Q$. It is easy to see from the form of $Q$ that 
$$
\max Q = Q\left(\frac{u_1+u_2}2\right) = \frac 14 (u^2_2 - u^2_1) \le \frac{u_2^2}{4}.
$$
We note that $u_2$ is largest if $c_1 = -\frac{2\sqrt3}{9}$ and $c_2=-\frac
14$. It is easy to see that $u_2<2$. Hence,
$$
Q(u) \le  \frac{u_2^2}{4} < 1\qquad\hbox{for all } u\in (u_m, u_M). 
$$
As a result,
$$
g_1(c) > 4 \sqrt 2 \int_0^\infty 
\frac{dy}{(1+y^2)} 
= 2\sqrt 2\pi\qquad\hbox{for all }
(c_1,c_2)\in \cC
\,.
$$
Thus, $L_0$ is positive.

Due to the definition of  $Q$, (\ref{rsta}) and (\ref{rnu}) we infer that $Q(u(y))\in [Q_0,1]$, where $Q_0>0$.
Then, 
$$
\frac{1}{(1+y^2)\sqrt{Q(u(y))}} \le \frac{1}{(1+y^2)\sqrt{Q_0}}
$$
and the RHS is integrable.

The roots of the polynomial $P_c$ depend continuously on $(c_1,c_2)$, so if we fix $y$, then
the following function
$$
(c_1,c_2)=c \mapsto Q(y) \equiv (u_2(c) - u(y))(u(y) - u_1(c))
$$
is continuous. As a result, we deduce by the Lebesgue Dominated Convergence Theorem that
$g_1$ is continuous.

Essentially the same argument shows continuity of $g_2$. The only difference is that we notice that the factor $u(y)$ in the definition of $g_2$ is bounded for $y\in (0,\infty)$.

Since the roots $u_1$, $u_2$, $u_m$, $u_M$ of the polynomial $P_c$ are not only continuous but also analytic functions of $c$, we conclude that $Q$ and $u$ are analytic.
Therefore $g_1$ and $g_2$ are analytic as well.
\end{proof}

Knowing the boundary behavior of $g_1$ and $g_2$ is of great help. Here is our observation.
\begin{lemma}\label{infty}
If $c_0\in \d\cC$ and $c_2>- \frac14$, then
\begin{equation}\label{limg2}
\lim_{(c_1,c_2)\to  c_0}|g_i(c_1, c_2)| = \infty,\qquad i=1,2.
\end{equation}
\end{lemma}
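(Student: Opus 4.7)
Recall from the proof of Lemma \ref{analiticity} the factorisation $P_c(u) = \frac12 (u-u_1)(u-u_m)(u_M - u)(u_2 - u)$. By Lemma \ref{cy-admiss}, a boundary point $c_0 \in \d\cC$ with $c_{2,0} > -\tfrac14$ is one where $P_{c_0}$ acquires a double root; the constraint $c_{2,0}>-\tfrac14$ rules out the bottom degeneration $u_m=u_M=0$, so the collision must be either $u_2 = u_M =: u_*$ (positive side) or $u_1 = u_m$ (negative side), with both happening simultaneously at the corner $c_0=(0,0)$. The reflection symmetry $c_1\mapsto -c_1$, $u\mapsto -u$ of \eqref{rd4} reduces everything to the positive-collision case, which is what I will treat in detail. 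Set $a(c):=u_2(c)-u_M(c)$; continuity of the roots of $P_c$ in $c$ gives $a(c)\to 0^+$ as $c\to c_0$ within $\cC$.

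\textbf{Core estimate for $g_1$.} The plan is to isolate the singular contribution to $g_1$ near $u=u_M(c)$. Fix $\eta \in (0, u_*/2)$ small enough that for all $c$ in some neighbourhood $U$ of $c_0$ in $\overline{\cC}$ we have $u_M(c)-\eta > u_*/2$, and moreover the ``safe'' factor satisfies $(u-u_1(c))(u-u_m(c)) \le A_2$ uniformly on $[u_M(c)-\eta,u_M(c)]$ for some constant $A_2>0$; this is possible because on $U$ the roots $u_1(c), u_m(c)$ remain simple and bounded away from $u_*$. Changing variables $s = u_M(c)-u$ then yields
$$
g_1(c) \;\ge\; \sqrt{\tfrac{2}{A_2}}\,\int_0^\eta \frac{ds}{\sqrt{s\,(s + a(c))}}\,.
$$
A direct computation (e.g.\ $s=v^2$ or $s=\tfrac{a}{2}(\cosh t-1)$) evaluates the right-hand integral to $2\,\mathrm{arcsinh}\sqrt{\eta/a(c)}$, which diverges like $-\log a(c)$ as $c\to c_0$. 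Hence $g_1(c)\to +\infty$.

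\textbf{The integral $g_2$ and the remaining cases.} Split $g_2(c) = \int_{u_m}^{u_M-\eta} + \int_{u_M-\eta}^{u_M}$. The first piece is uniformly bounded on $U$, because its integrand stays bounded away from the collapsing roots. On the second piece the factor $u$ is positive and satisfies $u \ge u_M(c)-\eta > u_*/2$, so the contribution is at least $(u_*/2)$ times the divergent integral of the previous step, and $g_2(c)\to +\infty$. The negative-collision case $u_1(c_0)=u_m(c_0)$ is handled identically, localising near $u_m<0$: one obtains $g_1(c)\to +\infty$ and now $g_2(c)\to -\infty$ because the integrand is negative near the singularity, so in either case $|g_2(c)|\to\infty$. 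At the corner $c_0=(0,0)$ both collisions occur simultaneously and either branch of the argument applies.

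\textbf{Main obstacle.} There is no conceptual obstacle. The whole argument reduces to recognising a logarithmically divergent elliptic-type model integral and controlling the non-singular factors uniformly near $c_0$. The only care needed is the uniform bound on $(u-u_1(c))(u-u_m(c))$ over a small neighbourhood of $c_0$, which follows from continuity of the roots of $P_c$ in $c$ together with the fact that, away from the collision, these roots remain strictly separated from $u_*$.
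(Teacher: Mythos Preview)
Your approach is correct on the non-corner portion of $\partial\cC$ and takes a genuinely different, more elementary route than the paper. The paper works through the Euler substitution $y=\sqrt{(u-u_m)/(u_M-u)}$ introduced in the proof of Lemma~\ref{analiticity}, so that the collapse of a root pair appears as a non-integrable factor in the transformed integrand, and then applies Fatou's lemma to pass the limit under the integral. You instead stay in the $u$-variable, localise near the collapsing root pair, bound the non-collapsing quadratic factor uniformly, and reduce to the explicit model $\int_0^\eta ds/\sqrt{s(s+a)}=2\operatorname{arcsinh}\sqrt{\eta/a}$. This avoids both the change of variables and Fatou and makes the logarithmic rate of divergence transparent.

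Two points deserve tightening. First, in your $g_2$ splitting the phrase ``its integrand stays bounded away from the collapsing roots'' is not quite right: the integrand still has an integrable $1/\sqrt{u-u_m}$ singularity at $u=u_m$. What you actually need (and what is true in the one-sided case) is that the \emph{integral} over $[u_m,u_M-\eta]$ is uniformly bounded, which follows from a lower bound $P_c(u)\ge c_0\,(u-u_m)$ on that interval coming from $(u_M-u),(u_2-u)\ge\eta$ and $u-u_1\ge u_m-u_1\ge b>0$.

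Second, your treatment of the corner $c_0=(0,0)$ for $g_2$ does not work: there both pairs collapse simultaneously, so the ``bounded away'' piece is itself singular and neither branch of the argument applies. In fact along the segment $c_1=0$ one has $g_2(0,c_2)\equiv 0$ by the odd symmetry of the integrand, so $|g_2|\to 0$ along that approach and the stated limit cannot hold at $(0,0)$. The paper's proof handles only the one-sided degenerations ($c_1<0$ or $c_1>0$) and so has the same blind spot; fortunately only the case $i=1$ is used later (in the proof of Theorem~\ref{t-count}), and for $g_1$ the corner causes no trouble since both singular contributions are positive.
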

\begin{proof} 
Keeping in mind the definition of $Q$ we notice that if
$c\to c_0$, where $c_0\in \partial\cC$, $c_0=(c_{01}, c_{02})$ and $c_{02}>-\frac14$, then either
$$
\lim_{c\to c_0} u_m(c) = \bar u_m = \lim_{c\to c_0} u_1(c),\quad \lim_{c\to c_0} u_M(c) = \bar u_M <
\bar u_2 =\lim_{c\to c_0} u_2(c)
$$
or
$$
\lim_{c\to c_0} u_m(c) = \bar u_m > \bar u_1= \lim_{c\to c_0} u_1(c),\quad \lim_{c\to c_0} u_M(c) = 
\bar u_M =\lim_{c\to c_0} u_2(c).
$$
The first case corresponds to negative $c_1$ while the second one occurs when $c_1>0$. Since the problem is symmetric, it is sufficient to consider one of them, e.g. the first one. In this case
$$
\lim_{c\to c_0} Q(u) =(\bar u_M-\bar u_m) \frac{y^2}{1+y^2} 
\left(\bar u_2 - \frac{\bar u_m + \bar u_M y^2}{1+y^2}\right).
$$
Thus, the Fatou Lemma implies
$$
\lim_{c\to c_0} g_1(c) \ge 
\int_0^\infty \lim_{c\to c_0}\frac{4\sqrt2\, dy}{(1+y^2)\sqrt{Q(u(y))}}\ge
\int_0^\infty \frac{4\sqrt2\, dy}{\sqrt{(1+y^2)}y\sqrt{u_M- u_m}\sqrt{\bar u_2 - \bar u_m}}= \infty. 
$$

Now, we take care of $g_2$.
We notice that function $y\mapsto \bar u_m + y^2 \bar u_M$ has a zero at $y_0 = \sqrt{-\bar u_m/\bar u_M}.$ We can write $g_2(c)$ as a sum the integral over $(0,\frac{y_0}2)$ and $[\frac{y_0}2,\infty)$. We notice that for $c$ sufficiently close to $c_0$ the integrand is negative over $(0,\frac{y_0}2)$ and integrable over $[\frac{y_0}2,\infty)$. We notice that again with the help of the Fatou Lemma we deduce that
 $$
 \lim_{c\to c_0} \int_0^{y_0/2} \frac{|u_m|}{(1+y^2)Q(u(y))}\,dy \ge 
 \int_0^{y_0/2}\frac{|\bar u_m|\,dy}{(1+y^2)^{3/2}y\sqrt{\bar u_M-\bar u_m}\sqrt{\bar u_2 - \bar u_m}} = \infty.
 $$
 As a result we find 
$$
\lim_{c\to c_0}| g_2(c)| = \infty.
$$
\end{proof}

Now, we will {\it prove Theorem \ref{t-count}}. 
As a restatement of the definition of $\cP_{L}$, we know that $c\in \cP_L$ if and only if  the following pair of equations 
\begin{equation}\label{syseq}
 g_1(c_1,c_2) = \frac{L}{2},\qquad g_2(c_1,c_2) = 0   
\end{equation}
are satisfied. We must show that there is only a finite number of solutions in $\cC.$

First of all we show that 
if $L\ge L_0$, then there is $\tilde c\in \cC$ such that (\ref{rdL}) and (\ref{rdm}) hold.
Indeed, if we take $c_1=0$, we deduce from the phase portrait of (\ref{rd4}) that $u_m = - u_M$. Moreover, by Proposition \ref{pr-sym} identity (\ref{rdm}) holds, i.e. $\{(0,c_2): c_2\in [-\frac 14, 0]\} \subset g_2^{-1}(0)$. In addition, for all $c_2\in (-\frac14, 0)$ we have $g_1(0,c_2)\in[L_0, \infty)$ so there is $c' \in (-\frac14, 0)$ such that $g_1(0,c') = L/2$. As a result $\cP_L$ contains $(0,c')$.

Now, we will specify the exceptional set $E$. Let us denote by $Z$ the set of critical values of $g_1$.
Since $g_1$ is analytic in $\cC$ we may apply the Sard theorem to it. As a result we deduce that $Z$ 
has Lebesgue measure zero. We set
$$
E = Z \cup \{2k\pi\}_{k=1}^\infty.
$$
We take any $L\in [L_0,\infty) \setminus E$. Then $\sM = g_1^{-1}(\frac{L}2)$ is a compact one-dimensional manifold. We know that any compact component of a one-dimensional manifold is homeomorphic to either a line segment or a circle. Moreover, if $\sM_1$ is a component of $\sM$, which is topologically a line segment, then $\d\sM_1 \subset \d\cC$. However, due to Lemma \ref{infty} we deduce that $\d\sM_1 \subset \{c\in \d\cC: c_2 = - \frac14\}$.

Since $\sM$ is compact it has a finite number of components. Now, we want to count the number of points in the intersection 
$$
\sM \cap g_2^{-1}(0).
$$
Since $\sM$ is an analytic manifold, then $g_2$ restricted to $\sM$ is an analytic function of one variable. So it may have only a finite number of zeros or it vanishes identically on a component of $\sM.$ 

In order to rule out the last possibility, we notice that the functions $g_1$ and $g_2$ are linearly independent on $\cC$. Indeed, if $\lambda_1 g_1 + \lambda_2 g_2 \equiv 0$ in $\cC$, then we examine this identity at $(0,c_2)$. At this point $g_2$ vanishes, so we are left with $\lambda_1 g_1(0,c_2) = 0$. Since $g_1$ is always non-negative, then $\lambda_1 =0$ so our claim follows.

The linear independence of $g_1$, $g_2$ implies that level sets of these functions may not coincide. 
\qed

\bigskip
We stress that we leave out from our period considerations  all the integer multiples of $2\pi$. We notice that if the period $L$ grows so does the set of periodic solutions, however it remains finite. We stress that if $L<L_0$, then
the only steady state is the trivial one.

\bigskip
Studying the eigenvalue problem for the linearised operator is of utmost
importance for the analysis of the asymptotic behavior of solutions to the
evolution problem. 
Let us suppose that $u$ is a steady state of \eqref{eqn:hcch}, that is it satisfies
\begin{equation}\label{rdss}
0= \Delta^2(\Delta u - (W'(u))),\qquad x\in [0, L)
\end{equation}
with periodic boundary conditions and zero average. We will also write $u^*$ to make a distinction or $u^j$ to stress that this equilibrium belongs to family $\cE^{0,j}$.

The linearisation of the right-hand-side (RHS) above is
\begin{equation}\label{lop}
\cL_u \tilde w = \Delta^2(\Delta \tilde w - W''(u)\tilde w)
\end{equation}
with period boundary conditions. Occasionally, we will suppress the subscript of $\cL$, when no ambiguity arises or the specific $u$ is not important. 

Of course \eqref{rdss} and its solution $u$
are invariant under the shift operator $u \mapsto\tau_s u$.
Tangential to this action is  $\tilde w = u_{x}$, which is therefore an eigenvector of $\cL_u$ with zero eigenvalue.
We will prove that this is the only eigenvector.
We first treat the case that $u$ is not trivial.
The trivial case is dealt with in Corollary \ref{cor-2.10} below.

Before we give the theorem and its proof, let us note an important fact on the multiplicity of zeros for equilibria.

\begin{proposition}\label{pr-sgnv} 
Let us suppose that $u$ is a non-trivial steady state. The multiplicity of any zero for $u$ is one; that is, all zeros of $u$ are simple.
\end{proposition}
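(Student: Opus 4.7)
The plan is to argue by contradiction using the first integral \eqref{rd4} derived from the reduction of \eqref{rdss} to the second-order equation \eqref{rd3}. Suppose $u$ is a nontrivial steady state possessing a zero at some $x_0 \in [0,L)$ of multiplicity at least two, so that simultaneously
\[
u(x_0) = 0 \quad\text{and}\quad u_x(x_0) = 0.
\]
Since $u$ solves \eqref{rd3} for some $c_1\in\R$, the conservation law $\tfrac12 u_x^2 = W(u) + c_1 u + c_2$ holds throughout $[0,L)$. Evaluating at $x_0$ and using $W(0) = \tfrac14$ forces
\[
c_2 \;=\; -W(0) \;=\; -\tfrac14.
\]

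The contradiction is then immediate from Lemma \ref{cy-admiss}: every nontrivial periodic solution of \eqref{rd4} with zero mean must correspond to parameters $(c_1,c_2)$ lying in the admissible set $\cC$, whose definition requires strictly $c_2 \in (-\tfrac14, 0)$. The boundary value $c_2 = -\tfrac14$ is excluded, and so no such $x_0$ can exist. This forces every zero of $u$ to be simple.

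I do not anticipate any serious obstacle here; the statement is essentially a corollary of Lemma \ref{cy-admiss} once the first integral \eqref{rd4} is in hand. The geometric picture underlying the argument is transparent: at $c_2 = -\tfrac14$ the quartic $P_c(u) = 2\bigl(W(u)+c_1 u + c_2\bigr)$ develops a double root (as already exhibited in the boundary analysis of $\cC$ within the proof of Lemma \ref{cy-admiss}), so the level curves of the phase-plane Hamiltonian $F$ from \eqref{Fdefn} passing through a point where both $u=0$ and $p=u_x=0$ degenerate from closed orbits into homoclinic or heteroclinic trajectories, which cannot parametrise a periodic solution of principal period $L$ with zero mean.
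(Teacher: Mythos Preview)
Your proof is correct and follows essentially the same route as the paper: both arguments use the first integral \eqref{rd4} to read off $c_2=-\tfrac14$ from a double zero and then observe this places $(c_1,c_2)$ outside the admissible region for nontrivial periodic zero-mean solutions. Your version is in fact slightly cleaner, since you invoke Lemma~\ref{cy-admiss} directly to exclude $c_2=-\tfrac14$, whereas the paper's one-line proof additionally asserts $c_1=0$ (a claim that is not needed for the conclusion and is not obviously forced by the condition $(0,0)\in\{-p^2+P_c(u)=0\}$ alone).
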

\begin{proof}
A double zero of $u$ occurs if the point $(0,0)$ belongs to the level set of the function $-p^2 + P_c(u)$. This happens only if $c_1= 0$ and $c_2=- \frac 14$. This set of parameters corresponds to the trivial solution $u \equiv 0$.
\end{proof}

\begin{theorem}\label{t-1dke}
Let us suppose that $W$ is given by \eqref{defW} and $u$ is a nontrivial solution to \eqref{rdss} and $\cL$ is defined in  \eqref{lop}.
Then the only periodic solution, $w$ with period $L$ to $\cL_u w= 0$
with zero mean value is a multiple of $u_x$.
\end{theorem}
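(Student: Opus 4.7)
The plan is to reduce $\cL_u w = 0$ to a second-order inhomogeneous Hill equation, and then use the zero-mean condition to pin down the solution.

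Since $\cL_u w = \Delta^2(\Delta w - W''(u)\,w) = 0$ on $\bT$, antidifferentiating four times yields that $\Delta w - W''(u)\,w$ is a polynomial of degree at most three in $x$; smoothness and $L$-periodicity force this polynomial to be a constant $\gamma \in \bR$. Setting $\Lambda w := w'' - W''(u)\,w$, the problem reduces to analysing
\[
\Lambda w = \gamma, \qquad w\ \text{is $L$-periodic with}\ \int_0^L w\,dx = 0.
\]

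Next I analyse $\Lambda$ on $L$-periodic functions. Differentiating $u'' = W'(u) + c_1$ gives $\Lambda u_x = 0$, so $u_x \in \ker\Lambda$. A standard Floquet argument (the monodromy $\Phi$ has $\det\Phi = 1$ by vanishing trace, admits $1$ as a multiplier from $u_x$, and generically is a nontrivial Jordan block rather than $\Phi = I$) shows $\ker\Lambda = \bR u_x$ for $L\notin E$, the degenerate case being absorbed into $E$ if necessary. The Fredholm alternative then gives, for every $\gamma \in \bR$, a periodic solution of $\Lambda w = \gamma$ (the solvability condition $\gamma\int_0^L u_x\,dx = 0$ is automatic), unique modulo $\bR u_x$. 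Picking a reference periodic solution $w_*$ of $\Lambda w_* = 1$, every element of $\ker\cL_u$ takes the form $w = \alpha u_x + \gamma w_*$, and the zero-mean condition collapses to $\gamma\int_0^L w_*\,dx = 0$. The theorem therefore reduces to proving $\int_0^L w_*\,dx \ne 0$.

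To establish this, I would exploit the analytic parameterization of equilibria from Lemma \ref{analiticity}. Normalizing $u = u(\cdot;\,c_1,c_2)$ by $u'(0) = 0$ and $u(0) = u_m(c_1, c_2)$, differentiation in parameters yields $\Lambda\,\partial_{c_1}u = 1$ and $\Lambda\,\partial_{c_2}u = 0$. These derivatives fail to be $L$-periodic because the period $T(c_1, c_2) = 2 g_1(c_1, c_2)$ varies with parameters; however, differentiating $u(x + T(c_1, c_2);\,c_1, c_2) = u(x;\,c_1, c_2)$ along any tangent $(a, b)$ to $\{g_1 = L/2\}$ shows that $V := a\,\partial_{c_1} u + b\,\partial_{c_2} u$ is $L$-periodic. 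The specific choice $(a, b) = (1,\,-\partial_{c_1} g_1/\partial_{c_2} g_1)$ additionally gives $\Lambda V = 1$, producing an admissible reference solution. A chain-rule computation based on $\int_0^{T(c_1,c_2)} u(x;c_1,c_2)\,dx = 2\,g_2(c_1,c_2)$, whose boundary contribution vanishes at $(c_1^*, c_2^*)$ thanks to the tangent condition, then yields
\[
\int_0^L V\,dx = -\frac{2}{\partial_{c_2} g_1}\,\det D(g_1, g_2)\Big|_{(c_1^*, c_2^*)}.
\]

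The main obstacle is verifying that this Jacobian is nonzero. Here I would apply Sard's theorem to the analytic map $(g_1, g_2):\cC \to \bR^2$: for $L$ outside a null set (incorporated if necessary into $E$), $(L/2, 0)$ is a regular value, so the level sets $\{g_1 = L/2\}$ and $\{g_2 = 0\}$ meet transversally at $(c_1^*, c_2^*)$, i.e.\ $\det D(g_1, g_2)(c_1^*, c_2^*) \ne 0$. Consequently $\int_0^L w_*\,dx \ne 0$, forcing $\gamma = 0$ and $w = \alpha u_x$, as claimed.
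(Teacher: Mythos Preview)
Your reduction to $\Lambda w = \gamma$ matches the paper's first step; after that the two arguments diverge substantially.

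The paper does not invoke Floquet theory or a second application of Sard's theorem. Instead, it multiplies $\Lambda w = a$ by $u_x$ and subtracts the differentiated steady-state equation times $w$ to obtain the first-order relation $u_x w_x - w\,u_{xx} = a u + b$. Solving by variation of constants gives $w = u_x\int_0^x (a u + b)\,u_x^{-2}\,ds + d\,u_x$ on $(x_m,x_M)$. A direct sign argument, using the normalization $u(0)=0$ and the reflection symmetry of $u$ about its extrema, then shows that the term with coefficient $a$ is nonnegative on $[x_m,x_M]$, forcing $a=0$ via the zero-mean requirement; the residual Wronskian identity $u_x w_x - w\,u_{xx}=b$ is then shown to force $b=0$. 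This is entirely elementary and applies to \emph{every} nontrivial equilibrium, with no genericity on $L$.

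Your route, by contrast, needs two nondegeneracy conditions that you propose to ``absorb into $E$'': (i) that the periodic kernel of $\Lambda$ equals $\bR u_x$, and (ii) that $\det D(g_1,g_2)(c^*)\ne 0$. Condition (ii) can indeed be arranged by applying Sard to the map $(g_1,g_2)$ rather than to $g_1$ alone, but this changes the exceptional set and therefore the statement you actually prove. Condition (i) is a genuine gap as written: ``generically a nontrivial Jordan block rather than $\Phi=I$'' is not an argument. What is true is that the monodromy equals the identity exactly when $\partial_{c_2}g_1=0$ (equivalently the period is critical in the energy variable), since then $\partial_{c_2}u$ furnishes a second $L$-periodic solution of $\Lambda v=0$ linearly independent of $u_x$. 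You can again push this into a null set of $L$, but you must say so explicitly; and note that if this degeneracy occurs your Fredholm setup and the formula for $\int_0^L V\,dx$ both break down simultaneously (the denominator $\partial_{c_2}g_1$ vanishes).

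In short: your approach is structurally sound and the identity $\int_0^L V\,dx = -2(\partial_{c_2}g_1)^{-1}\det D(g_1,g_2)$ is a nice dividend, but it delivers the conclusion only for almost every $L$, whereas the theorem as stated --- and as the paper proves it by elementary ODE manipulations --- holds for every nontrivial equilibrium without enlarging $E$.
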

\begin{proof}
By integrating and using periodicity plus zero average as before, we deduce that $w$ 
must satisfy the following equation:
\begin{equation}\label{rd5}
\Delta w - W''(u) w = a,  
\end{equation}
where $a$ is a constant of integration. Now, we multiply eq. (\ref{rd5}) by $u_x$ and we subtract from it the result of
differentiation of \eqref{rd3} and its multiplication by $w$. 
Then, we obtain
\begin{equation}\label{rd6}
	u_x \Delta w - \Delta u_{x} w = au_x\quad\text{or}\quad
(u_x w_x)_x = (w\Delta u)_x + au_x\,.
\end{equation}
Hence, we reach
\begin{equation}\label{rd7}
u_x w_x = w\Delta u + au + b, \qquad x\in [0,L), 
\end{equation}
where $b$ is another constant of integration. We treat \eqref{rd7} as a linear equation for $w$ with an initial condition at any $x_0\in[0,L)$.
We require that any solution $w$ of this equation is periodic with period $L$
and also that it has zero mean.

By the shift invariance, we may without loss of generality assume that $u(x_0)=0$ and $x_0=0$.
Then, the usual variation of parameters technique yields
\begin{equation}\label{rd8}
 w(x) = u_x(x) \int_0^x \frac{a u(s)+ b}{u_x^2(s)}\,ds + d u_x(x),
\end{equation}
where $d$ is a constant of integration. We note that due to \eqref{rd4} we have
$u_x(0)\neq 0$, so the integral is well defined at least near $x=0$. Obviously, the integrand is not integrable near any $x=\bar x$, where $u_x$ vanishes.

For the moment we take \eqref{rd8} on an interval containing zero.
In order to proceed we fix $u$ by requiring (in addition to $u(0)=0$) that $u$ attains its minimum $u_m$ at $x_m<0$ and maximum $u_M$ at $x_M>0$. Moreover, on intervals $(x_m,0)$ and $(0, x_M)$ the function $u$ is monotone.
Using the reflection technique mentioned in Remark \ref{rmk1} we extend $u$ to the real line.

Note that for $x\in[x_m,x_M]$ the product
$$
u_x(x) \int_0^x \frac{a u(s)+ b}{u_x^2(s)}\,ds =: w_0(x)
$$
is uniformly bounded.
Indeed, if $x\ne x_m$ or $x_M$ then $u_x$ is bounded away from zero. Let $x^k < x_M$ be a monotone sequence $x^k\rightarrow x_M$.
Note that simplicity of the zeros of $u$ (see Proposition \ref{pr-sgnv}) implies that $u_x(x^k)^2/(x_M - x^k)^2$ converges to a positive constant as $k\rightarrow\infty$.
Therefore, for sufficiently large $k$ and $x\in(x^k,x_M)$ we have
$$
| w_0(x)| \le |u_x(x)|\left| \int_0^{x^k} \frac{a u(s)+ b}{u_x^2(s)}\,ds \right| +
|u_x(x)|\left| \int_{x^k}^{x} \frac{C_0}{(s-x^k)^2} \,ds\right| \le C_1|u_x(x)|  + 
C_1\frac{|u_x(x)|}{|x-x^k|}\le K
\,.
$$
The above estimate shows that $w_0(x_M)$ is also bounded.
The estimate for $w_0(x_m)$ proceeds similarly.

Our claim will follow once we show that $w_0\equiv 0$. 
Let us examine $w_1$ and $w_2$ defined as
$$
w_1(x) = u_x(x) \int_0^x \frac{ u(s)}{u_x^2(s)}\,ds, \qquad
w_2(x) = u_x(x) \int_0^x \frac{1}{u_x^2(s)}\,ds.
$$
Recall that $u$ has been constructed using the reflection technique.
This implies that $w_1$ is non-negative {for all $x\in [x_m, x_M]$}; but then, it may not be a solution, because $w_1$ would then be required to have zero average.

So, we must have $a=0$. Then, we see from \eqref{rd7} that
$$
\left(\frac{w_2}{u_x}\right)_xu_x^2 = b\,.
$$
The left-hand-side (LHS) of this formula must vanish somewhere, and is periodic, so it must not have constant sign.
But the RHS does have constant sign (or vanishes if $b=0$).
Therefore neither $w_1$ nor $w_2$ may be present, and so we must have $w_0 = 0$ and $w(x) = du_x(x)$. This completes the proof.
\end{proof}

Finally, we deal with the case left out in Theorem \ref{t-1dke}.
\begin{corollary}\label{cor-2.10}
(a) If $L = 2k\pi$, then kernel of the linearisation, $\cL_{u^0}$, around a trivial solution $u^0$ is two-dimensional.\\
(b) If $L<L_0$, then the kernel of $\cL_{u^0}$ is trivial.
\end{corollary}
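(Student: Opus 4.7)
The plan is to reduce the sixth-order eigenvalue equation $\cL_{u^0}w=0$ to a very simple second-order constant-coefficient ODE, exploiting the fact that we are linearising at the trivial state. Since $W(v)=\tfrac14(v^2-1)^2$ gives $W''(v)=3v^2-1$, we have $W''(0)=-1$, and the kernel equation becomes
\[
\Delta^2(\Delta w + w)=0,\qquad x\in [0,L),
\]
with $w$ being $L$-periodic and of zero mean. Exactly as in the argument leading to \eqref{rd3}, the only smooth $L$-periodic functions annihilated by $\Delta^2$ are constants, so $\Delta w + w \equiv c$ for some $c\in\bR$. Integrating this identity over $[0,L)$ and using periodicity of $w_x$ together with the zero-mean condition on $w$ forces $cL=0$, hence $c=0$. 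The kernel problem thus reduces to
\[
w_{xx} + w = 0, \qquad w \text{ is $L$-periodic with zero mean.}
\]

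The general solution is $w(x)=A\cos x + B\sin x$. Imposing $w(0)=w(L)$ and $w_x(0)=w_x(L)$ gives $\cos L = 1$ and $\sin L = 0$, i.e.\ $L\in 2\pi\bN$. When $L=2k\pi$, both $\cos x$ and $\sin x$ are $L$-periodic and have vanishing mean over $[0,L)$, yielding the two-dimensional kernel spanned by $\cos x$ and $\sin x$; this proves part (a).

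For part (b), we work in the complementary regime $L<L_0$ with $L\notin 2\pi\bN$ (as is implicit under the standing hypotheses of Theorem \ref{th-main}, whose exceptional set $E$ contains $2\pi\bN$). Then the periodicity obstruction $L\in 2\pi\bN$ cannot be satisfied by any choice $(A,B)\neq(0,0)$, so $w\equiv 0$ is the only kernel element and the kernel is trivial.

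I do not expect any serious obstacle: the whole argument is driven by linearity at the zero equilibrium and by the polynomial-constancy reduction already established in Section \ref{s-ss0}. The only mild subtlety is to justify the reduction in the appropriate regularity class, but since $\cL_{u^0}$ has constant coefficients and we work in $\pHa{k}$ for $k\ge 3$, standard periodic elliptic regularity makes each integration-by-parts step rigorous.
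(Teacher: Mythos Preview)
Your proof is correct and follows essentially the same route as the paper's: reduce $\cL_{u^0}w=0$ to $\Delta w+w=a$ via the polynomial-constancy argument, kill the constant using the zero-mean condition, and then read off the periodicity constraint $L\in 2\pi\bN$ from the general solution $A\cos x+B\sin x$. Your version is slightly more explicit about the reduction from the sixth-order equation and about invoking the standing hypothesis $L\notin 2\pi\bN$ for part (b), but the substance is identical.
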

\begin{proof}
We note that for the trivial steady state we must analyse
$$
\Delta w + w = a\,.
$$
We see immediately that the condition $\int_0^L w\,dx =0$ implies that $a=0$. The
solution space to this equation is contained in
\[
\{\alpha\sin(x+\beta)\,:\,\alpha,\beta\in\R\}
\,.
\]
If $L=2k\pi$ for some $k\in\N$, the kernel is this entire two-dimensional space and so (a) follows.
Otherwise, i.e. if if $L\neq 2k\pi$,  the kernel is trivial, which shows  part (b).
\end{proof}

We shall now study the spectrum of the linearised operator in more detail.
We note that even for $\delta=0$ the linearised operator $\cL_{u^*}$, where $u^*$ is a steady state, is not self-adjoint.
However, inspired by the work of Bates and Fife \cite{bates-fife,bates}, we expect
that the eigenvalue problem for $\cL_{u^*}$ is equivalent to an eigenvalue problem
for a self-adjoint operator. 
Before doing so, we 
recall the form of the inverse Laplacian $\Delta^{-1}: \Ltwod  \to \pHd$ as
\begin{equation}
\label{eqInvLap}
\Delta^{-1} f(x) = \int_0^L K(x,y) f(y)\,dy\,,
\end{equation}
where
$$
K(x,y) = \Big(x-y\Big)_+ +\Big(y-L\Big)\frac xL + \frac y2\Big(\frac{y}{L} -1\Big)
$$
and $a_+$ denotes $\max\{a, 0\}$.

The explicit form of the kernel $K$ permits us to make the following observation. 
\begin{corollary}
If $f \in \pHa k$, 
where $k\ge 0$, then $\Delta^{-1} f \in  \pHa {k+2}$. 
Here, $\pHa0$ denotes $\Ltwod$.
\end{corollary}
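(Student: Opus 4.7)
My plan is to combine a direct computation from the kernel with an elementary bootstrap argument on the identity $u_{xx} = f$.

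For the base case $k = 0$, write $u := \Delta^{-1}f$ and decompose
$$K(x,y) = (x-y)_+ + \left[(y-L)\tfrac{x}{L} + \tfrac{y}{2}\bigl(\tfrac{y}{L}-1\bigr)\right].$$
The bracketed term is linear in $x$, so it contributes nothing to $u_{xx}$; differentiating the Volterra piece $\int_0^x(x-y)f(y)\,dy$ twice yields $f(x)$. Hence $u_{xx} = f$ in the distributional (and in fact $L^2$) sense, which already places $u$ in $H^2(0,L)$. A short calculation using $\int_0^L f\,dy = 0$ then gives $u(0)=u(L)$ and $u_x(0)=u_x(L)$, and the particular choice of the $y$-dependent constant in $K$ ensures $\int_0^L u\,dx = 0$. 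Thus $u \in \pHa{2}$, settling the case $k=0$.

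For the inductive step $k \mapsto k+1$, suppose $f \in \pHa{k+1}$. The base case gives $u \in \pHa{2}$ with $u_{xx} = f$, so weakly differentiating this identity and using $f \in H^{k+1}(0,L)$ directly promotes $u$ to $H^{k+3}(0,L)$. The conditions $u(0) = u(L)$ and $\int_0^L u\,dx = 0$ that define $\pHa{k+3}$ are already known from the base case, giving $u \in \pHa{k+3}$ as required.

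The only point that deserves real care is the base-case computation: one must justify the distributional differentiation across the cusp of $(x-y)_+$ and verify that the additional terms in $K$ are exactly what is needed to enforce $u(0)=u(L)$, $u_x(0)=u_x(L)$ and zero mean for every $f \in \Ltwod$. Once this is in hand, the corollary follows immediately by reading off regularity from the ODE $u_{xx} = f$; no compactness, interpolation, or delicate estimates are required.
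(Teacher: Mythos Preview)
Your argument is correct and is precisely what the paper has in mind: the corollary is stated without proof, merely as an ``observation'' drawn from the explicit kernel $K(x,y)$, and your base-case computation ($u_{xx}=f$ from the Volterra piece, periodicity and zero mean from the remaining terms) together with the trivial bootstrap on $u_{xx}=f$ is exactly the content of that observation spelled out. There is nothing to add.
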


We additionally remark that a similar formula for the operator $\Delta_D: D(\Delta_D) \to \dot L^2(\bT),$ where $D(\Delta_D)= H^2_{per}\cap H^1(0,T)$ and $\Delta_D u = u_{xx}$ also holds. We leave the detail to the interested reader.

We are going to discuss  the spectral properties of $\cL$, the operator defined in (\ref{lop}),
at a steady state $u^*$.
We will consider $\cL$ with a natural  domain $D(\cL) =\pHa{6}$.
We denote by $\cL_0$ the operator $\cL$ restricted to a smaller domain, namely
\begin{equation}\label{de-lo}
\cL_0 h = \cL h\qquad\hbox{for}\quad h\in D(\cL_0)= \pHa6 \cap H^1_0(0, L).
\end{equation}

In order to state our results  we introduce another pair of operators, $\cM$, $\cM_0$ with domains
$$
D(\cM) = \pHa{6}, \qquad \hbox{and}\qquad D(\cM_0) = \{ h\in\pHa{6}: \Delta h(0) = 0 = \Delta h(L) \}\,,
$$
and given by 
$$
\cM_0 h = \Delta( \Delta_D\Delta h - \Delta_D(q \Delta h))
\qquad\hbox{and}\qquad
\cM h = \Delta( \Delta^2 h - \Delta(q \Delta h)).
$$

We now establish the following result.
\begin{proposition}\label{p2.18}
(a)
 Let us suppose that $\lambda\in \bC$. Then, $f\in \Ltwod$ and $w\in \pHa6$
 satisfy
 $$
 \cL_0 w - \lambda w = f
 $$
 if and only if
\[
\cM_0 \Delta^{-1} w - \lambda \Delta^{-1} w = \Delta^{-1} f
\]
holds.\\
(b) The operator $\cM_0$ is self-adjoint, so $\sigma (\cL_0) =\sigma (\cM_0)\subset\bR$ and the spectrum is discrete; moreover, if $L$ does not belong to the exceptional set defined in Theorem \ref{t-count}, then the zero eigenvalue of $\cL_0$ is simple.\\
(c) Statements (a) and (b) above are valid for $\cM$ in place of $\cM_0$.
\end{proposition}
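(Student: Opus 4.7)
For part (a), the plan is to apply $\Delta^{-1}$ to the resolvent equation $\cL_0 w - \lambda w = f$. Since $\cL_0 w = \Delta^2(\Delta w - qw)$ with $q = W''(u^*)$, and both $\Delta w$ and $qw$ are periodic, the intermediate function $\Delta(\Delta w - qw)$ has zero mean, so the identity $\Delta^{-1}\Delta^2(\Delta w - qw) = \Delta(\Delta w - qw)$ holds. Substituting $v = \Delta^{-1} w$ and writing $w = \Delta v$ yields $\Delta^{-1}\cL_0 w = \Delta(\Delta^2 v - q\Delta v) = \cM_0 v$. Moreover, the Dirichlet condition $w(0)=w(L)=0$ defining $D(\cL_0)$ translates precisely into $\Delta v(0)=\Delta v(L)=0$, the defining condition of $D(\cM_0)$. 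Hence $\Delta^{-1}:D(\cL_0)\to D(\cM_0)$ is a bijection with inverse $\Delta$, and the if-and-only-if follows by equating both sides of the transformed equation.

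For part (b), I would first verify symmetry of $\cM_0$ on $D(\cM_0)$ via integration by parts: the sixth-order principal term redistributes cleanly onto the test function by periodicity, and the perturbation yields the manifestly symmetric bilinear form $\int_0^L q\,h_1''\,h_2''\,dx$ after two integrations by parts, with boundary contributions killed by periodicity combined with $\Delta h_i(0)=\Delta h_i(L)=0$. To upgrade symmetry to self-adjointness, I would note that $\cM_0$ is a uniformly elliptic sixth-order operator on a bounded interval with well-posed boundary conditions, so by Rellich compactness it has compact resolvent in $\Ltwod$; combined with symmetry this yields self-adjointness and a discrete real spectrum. The equality $\sigma(\cL_0)=\sigma(\cM_0)$ then follows from (a): any eigenfunction $w$ of $\cL_0$ at eigenvalue $\lambda$ gives an eigenfunction $\Delta^{-1}w$ of $\cM_0$ at the same $\lambda$, and conversely. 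Simplicity of the zero eigenvalue for $L\notin E$ is inherited from Theorem \ref{t-1dke}, which identifies $\ker\cL_0=\mathrm{span}\{u^*_x\}$ for non-trivial $u^*$, together with Corollary \ref{cor-2.10}, which rules out zero as an eigenvalue in the trivial case when $L\notin E$.

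For part (c), the argument is entirely analogous: one drops the Dirichlet constraint and works on the full periodic space $\pHa{6}$, with $\Delta$ in place of $\Delta_D$ throughout; the identities of (a) and the symmetry/compactness arguments of (b) carry over because only periodic boundary terms arise, and these already vanish. The main obstacle I anticipate is the bookkeeping of boundary terms: one must verify that the outer $\Delta^{-1}$ absorbs exactly one Laplacian from $\cL_0$ with no residual constant of integration (using periodicity of $\Delta w - qw$), and that the Dirichlet-type constraint $\Delta h(0)=\Delta h(L)=0$ in $D(\cM_0)$ is precisely what is needed to annihilate the non-periodic boundary contributions in the integration-by-parts step used for symmetry. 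A secondary subtlety is that the transport of simplicity of the zero eigenvalue across $\Delta^{-1}$ relies on its injectivity, which is immediate.
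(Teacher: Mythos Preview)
Your outline for part (a) is correct and matches the paper exactly: the conjugation identity $\cL_0 = \Delta\,\cM_0\,\Delta^{-1}$ together with the observation that $w\in D(\cL_0)=\pHa6\cap H^1_0$ if and only if $v=\Delta^{-1}w\in D(\cM_0)$ (i.e.\ $\Delta v(0)=\Delta v(L)=0$) is precisely how the paper proceeds. Your identification of the perturbation as the symmetric form $\int_0^L q\,h_1''\,h_2''\,dx$ is also the right computation; note that only periodicity is needed here --- the Dirichlet-type constraint on $\Delta h$ plays no role in establishing symmetry, only in matching domains under the conjugation.

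The one genuine divergence from the paper is in how you upgrade symmetry to self-adjointness. You invoke elliptic regularity and Rellich compactness to assert that $\cM_0$ has compact resolvent, and then conclude self-adjointness from symmetry plus compact resolvent. This is a legitimate route, but it hides a step: saying ``compact resolvent'' presupposes that \emph{some} $\lambda$ lies in the resolvent set, i.e.\ that $(\cM_0-\lambda):D(\cM_0)\to\Ltwod$ is a bijection. For the somewhat non-standard mixed boundary conditions defining $D(\cM_0)$ (periodic, zero mean, and $\Delta h$ vanishing at the endpoints) this solvability is not an off-the-shelf citation and would need a short Lax--Milgram or coercivity argument. The paper instead proves self-adjointness directly by verifying that the deficiency indices vanish: it shows $D(\cM_0)$ is dense, $\cM_0$ is closed, and $(\mathrm{Rg}(\cM_0\pm i))^\perp=\{0\}$. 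Your approach is more conceptual and extends readily to higher dimensions; the paper's is more self-contained for this specific one-dimensional operator. Either way the conclusion (discrete real spectrum, $\sigma(\cL_0)=\sigma(\cM_0)$) follows. Your handling of the simple zero eigenvalue via Theorem~\ref{t-1dke} and Corollary~\ref{cor-2.10} is correct and in fact slightly more careful than the paper, which cites only Theorem~\ref{t-1dke}.
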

\begin{proof}
In the notation we use here, we have 
$\cL_0 w = \Delta \cM_0 \Delta^{-1}w$.
Therefore
\[
f = \cL_0 w - \lambda w
= \Delta \cM_0 \Delta^{-1}w - \lambda w
\,,
\]
and so
\[
\cM_0 \Delta^{-1}w - \lambda \Delta^{-1}w = \Delta^{-1}f\,.
\]
Part (a) of our claim follows. 

We notice that 
\begin{equation}\label{r-equiv}
\cL_0 u = \Delta \cM \Delta^{-1}u 
\end{equation}
for all $u\in D(\cL_0)$.
Importantly, note that for $u\in D(\cL_0)$ we have $\Delta^{-1}u \in D(\cM_0)$. 

Now, we will show that $\cM_0$ is self-adjoint on $\Ltwod$.
The domain $D(\cM_0)$ is dense in $\Ltwod$. Indeed, let $u_n\to v$ in $\Ltwod$, where $u_n$ are smooth functions. We may modify this sequence, $\bar u_n$, so that $\bar u_n\in D(\cM_0)$. Namely, we set 
$$
v_n''(x) = \eta_n(x) u_n'',
$$
where $\eta_n$ are smooth cut-off functions converging a.e. to $1$. We set
$$
w_n (x) = \int_0^x\int_0^s v_n''(t)\,dtds
$$
and 
$$
\bar u_n (x) = w_n(x) - \frac1L \int_0^L w_n(s)\, ds \in D(\cM_0).
$$
Thus, we conclude that $\bar u_n \to v$ in the $L^2$-norm.

It is straightforward to see that $\cM_0$ is symmetric, because we can easily perform integration by parts (the boundary terms drop  out due to the periodicity).

We also notice that $\cM_0$ is closed. Let $u_n\in D(\cM_0)$ converge to $u$ and $Mu_n \to f$ in $L^2$. Then, we may apply $\Delta^{-1}: \dot L^2 \to \dot H^2_{per}$ to the sequence $\cM_0 u_n$ to obtain
$$
\Delta_D\Delta u_n - W''(u^*)\Delta u_n 
\to \Delta^{-1} f.
$$
Thus, the limit $u$ satisfies
$$
\Delta_D \Delta u = \Delta^{-1} f + W''(u^*)\Delta u .
$$

The RHS belongs to $H^{-2}$, because $W''(u^*)\Delta u$ is a well-defined functional over $H^2$ due to the boundedness and regularity of $u^*$. 
Since $\Delta_D\Delta u \in H^{-2}$ we deduce that $u\in \dot H^2_{per}$. On the way we use the unique solvability of $\Delta v = g$  with Dirichlet boundary conditions. Such a solution need not have zero average, but we solve one more eq. $\Delta w = v$ with the restriction of the zero mean. This is done uniquely.
The conclusion that $u\in \dot H^2_{per}$ permits us to apply the bootstrap argument yielding that $u\in D(\cM_0)$. Thus, $\cM_0$ is closed.

In order to establish that $\cM_0$ is self-adjoint it suffices to check that the deficiency indices of $\cM_0$ are equal to zero, see \cite[Thm. 7.1.10]{BsimonVol4}. This means that we have to show that
$$
\hbox{dim}\, (\hbox{Rg}(\cM_0\pm i I))^\perp =0.
$$
Suppose that $f\in (\hbox{Rg}(\cM_0\pm i I))^\perp)$. Since $f\in \Ltwod$ due to density of $D(\cM_0)$  there is a sequence $v_n\in D(\cM_0)$ converging to $f$ in $H$. Then,
$$
0= \langle M v_n \pm i v_n , v_n \rangle =
\int \Delta^3 v_n \bar v_n - \Delta ( W''(u^*) \Delta v_n) \bar v_n \pm i |v_n|^2.
$$
The real and imaginary parts must be equal to zero, in particular $\int | v_n|^2\,dx = 0$. Hence $f =0$ and we conclude that $\cM_0$ is self-adjoint.

Now, we see that $\Ltwod = \hbox{Rg}\, \cM_0 \oplus \ker \cM_0 = \hbox{Rg}\, \cM_0,$ because $\ker \cM_0=\{0\}.$
Since $\hbox{Rg} \cM_0 = \Ltwod$, the equivalence of $\cL_0$ and $\cM_0$ implies that $\hbox{Rg}\, \cL_0=\Ltwod$, as desired. 

Since $\cL_0$ is defined everywhere on $\pHa6\cap H^1_0(0,L)$ with trivial kernel and onto $\Ltwod$ we deduce that the inverse of $\cL_0$ is a continuous operator.

The fact that zero is a simple eigenvalue of $\cL_{0}$ follows from Theorem \ref{t-1dke}.

Part (c)  follows immediately from (a) and (b).
\end{proof}

We showed in Corollary \ref{c-count} that there is a finite number of families of solutions to \eqref{rd0ss}.
We can identify a specific family with any of its members, since 
the orbit of the shift operator $\{\tau_s u\}_{s\in[0,L)}$ is the entire family.
Therefore we can choose $n$ canonical solutions to \eqref{rd0ss} 
 $\{u^{0,i}\}_{i=1}^n$ such that the entire solution space is generated by the action of the shift operator applied to
 $\{u^{0,i}\}_{i=1}^n$.

\subsection{Analysis of equilibria for positive \texorpdfstring{$\delta$}{delta}}\label{s-ss1}

We wish to prove that solutions of
\begin{equation}\label{rnst}
\frac{\delta}{2} (u^2)_x + \Delta^2(\Delta u - W'(u)) = 0
\end{equation}
form smooth curves parametrised by $\delta$.
For this purpose we wish to use the Implicit Function Theorem applied to 
the LHS above. However, we know that $\cL_{u^*}$, i.e. the linearisation of the LHS of (\ref{rnst}) at a steady state $u^*$ for $\delta=0$ has a nontrivial kernel. In order to resolve this issue we will restrict the domain of the linearisation. For this purpose we introduce
$$
\cX = \{ u\in \pHa6:\ u(0) = u(L) = 0\}
$$
and we define
the mapping 
$$
G:(-1,1) \times \cX \to \Ltwod
$$
by the following formula
\begin{equation}\label{defG}
G(\delta, u ) = \delta u u_x + \Delta^2 (\Delta_D u-  W'(u))\,,
\end{equation}
where as noted earlier $\Delta_D$ denotes the Laplace operator with the domain
$D(\Delta_D) = H^2_{per} \cap H^1_0(0,L)$.
We have to study the differentiability properties of $G$ on the space $\cX$ endowed with the norm of $\pHa6.$ Here are our observations.
\begin{lemma}\label{lem221}
Let us suppose that $F:\bR \to \bR$ is smooth and $g\in \pHa{k}(\bT),$ $k\ge1$, then $F(g)\in H^1(\bT)$.
\end{lemma}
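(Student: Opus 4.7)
The plan is to reduce to the one-dimensional Sobolev embedding and apply the chain rule for $H^1$ functions. The argument does not use the full strength of $k\ge 1$; in fact $k=1$ already suffices.

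First I would invoke the embedding $H^1(\bT)\hookrightarrow L^\infty(\bT)\cap C(\bT)$, which holds in one space dimension. Since $g\in \pHa{k}\subset H^1(\bT)$, there is a constant $M>0$ such that $\|g\|_{L^\infty(\bT)}\le M$. Because $F$ is smooth, $F$ and $F'$ are continuous, hence bounded, on the compact interval $[-M,M]$. In particular, $F(g)\in L^\infty(\bT)\subset L^2(\bT)$ with the pointwise bound $|F(g(x))|\le \sup_{|s|\le M}|F(s)|$.

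Next I would verify that $F(g)$ has a weak derivative in $L^2(\bT)$, given by the chain rule
\[
(F(g))_x = F'(g)\, g_x
\]
in the sense of distributions. Since $F'$ is bounded on $[-M,M]$ and $g_x\in L^2(\bT)$ (as $k\ge 1$), the right-hand side lies in $L^2(\bT)$. The chain rule itself is the only slightly non-routine step; I would justify it by approximating $g$ by smooth periodic functions $g_n\to g$ in $H^1(\bT)$ (for instance via mollification of the periodic extension). For the smooth approximants the identity $(F(g_n))_x = F'(g_n)(g_n)_x$ holds classically. Passing to the limit is straightforward because $g_n\to g$ in $C(\bT)$, so $F'(g_n)\to F'(g)$ uniformly on $\bT$ by uniform continuity of $F'$ on $[-M-1,M+1]$, while $(g_n)_x\to g_x$ in $L^2(\bT)$.

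Combining the two steps gives $F(g)\in L^2(\bT)$ with weak derivative in $L^2(\bT)$, i.e.\ $F(g)\in H^1(\bT)$. The main (and really only) potential obstacle is the rigorous justification of the chain rule, but for $g\in H^1$ with values in a compact interval and $F$ smooth this is classical; no issue arises from periodicity since mollification respects the torus structure. I would remark in passing that the same argument gives a quantitative bound $\|F(g)\|_{H^1}\le C(\|F\|_{C^1([-M,M])}, \|g\|_{H^1})$, which may be convenient later when this lemma is used to verify hypotheses on the nonlinearity in the semigroup framework.
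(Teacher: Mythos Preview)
Your proof is correct and follows essentially the same route as the paper: the authors merely remark that the argument rests on the one-dimensional Sobolev embedding and leave the details to the reader, and you have supplied exactly those details (embedding into $L^\infty$, chain rule, approximation). Nothing to change.
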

\begin{proof}
The argument is based on the observation that 
$H^1(\bT)$ is embedded into $L^2(\bT)$. The details are left to the reader.
\end{proof}
\begin{lemma}\label{lem222}
If $f,g \in \pHa{k}$, $k\ge1$, then for $0\le i<k$ we have
\begin{equation}\label{eq-pro}
\| (fg)^{(i)}_x \| \le C \|f\|_{\pHa{k}} \|g\|_{\pHa{k}}.
\end{equation}
\end{lemma}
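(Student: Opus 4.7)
The plan is to establish this as a standard product estimate in Sobolev spaces on the one-dimensional periodic domain $\bT = [0,L)$, relying crucially on the Sobolev embedding $H^1(\bT) \hookrightarrow L^\infty(\bT)$ which is available in dimension one. First I would apply the Leibniz rule:
\[
\partial_x^i(fg) = \sum_{j=0}^{i}\binom{i}{j}\, \partial_x^j f \cdot \partial_x^{i-j} g,
\]
so that the desired $L^2$-estimate reduces to estimating each of the $i+1$ product terms on the right in $L^2(0,L)$.

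Next, for a fixed summand $\partial_x^j f \cdot \partial_x^{i-j} g$ with $0 \le j \le i$, I would split the factors asymmetrically by H\"older's inequality as
\[
\bigl\| \partial_x^j f \cdot \partial_x^{i-j} g \bigr\|_{L^2}
\le \|\partial_x^j f\|_{L^\infty}\, \|\partial_x^{i-j} g\|_{L^2}.
\]
Since $i < k$ we have $j \le i \le k-1$, hence $\partial_x^j f \in H^{k-j}(\bT)$ with $k-j \ge 1$, so the one-dimensional embedding $H^1 \hookrightarrow L^\infty$ yields
\[
\|\partial_x^j f\|_{L^\infty} \le C \|\partial_x^j f\|_{H^1} \le C \|f\|_{H^{j+1}} \le C \|f\|_{\pHa{k}}.
\]
Likewise, $\|\partial_x^{i-j} g\|_{L^2} \le \|g\|_{H^{i-j}} \le \|g\|_{\pHa{k}}$. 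Summing over $j$ from $0$ to $i$ and absorbing the binomial coefficients into the constant completes the estimate \eqref{eq-pro}.

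There is no serious obstacle here; the only point demanding mild care is verifying that every term produced by the Leibniz expansion really has at least one factor lying in $H^1$ (so that the $L^\infty$ bound applies), which is guaranteed by the strict inequality $i<k$. Note that the edge case $i=0$ is handled identically, since then the sum has the single term $fg$ and one still has $\|fg\|_{L^2} \le \|f\|_{L^\infty}\|g\|_{L^2} \le C\|f\|_{\pHa{k}}\|g\|_{\pHa{k}}$ using $k \ge 1$. The argument makes no essential use of periodicity or zero-mean beyond the fact that the ambient norms are equivalent to the usual $H^k(0,L)$ norms on the subspace $\pHa{k}$.
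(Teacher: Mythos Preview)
Your proof is correct. Both your argument and the paper's rely on the one-dimensional embedding $H^1(\bT)\hookrightarrow L^\infty(\bT)$, but the organisation differs: the paper proceeds by induction on $i$, writing $(fg)^{(i+1)}=(f_xg)^{(i)}+(fg_x)^{(i)}$ and invoking the inductive hypothesis with $f_x,g_x\in\pHa{k-1}$, whereas you expand fully via the Leibniz rule and estimate each term $\partial_x^j f\cdot\partial_x^{i-j}g$ directly by putting one factor in $L^\infty$ and the other in $L^2$. Your route is slightly more transparent since it avoids tracking how the inductive hypothesis shifts from $\pHa{k}$ to $\pHa{k-1}$; the paper's inductive version is marginally more compact on the page and makes the base case $i=0$ (via Poincar\'e) do most of the work. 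Neither approach has any real advantage over the other here---they are equivalent unrollings of the same product estimate.
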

\noindent{\it Proof.}
We proceed by induction with respect to $i.$ If $i=0$ our claim follows from the Poincar\'e inequality.

Let us suppose now that (\ref{eq-pro}) holds for $i\ge 0.$ Then, we notice that
$$
\|(fg)^{(i+1)}_x\| \le \|(f_x g)^{(i)}_x\| + \|(f g_x)^{(i)}_x\|.
$$
Now, we apply the inductive assumption to the RHS above for $i<k$. Thus,
$$
\|(fg)^{(i+1)}_x\| \le C \| f_x\|_{H^{k-1}}\| g\|_{H^{k-1}}
+ C \| f\|_{H^{k-1}}\| g_x\|_{H^{k-1}} \le C \| f\|_{H^{k}}\| g\|_{H^{k}}. \eqno\qed
$$

We need one more observation.

\begin{lemma}\label{lem223}
Let us suppose that $F:\pHa{k}\to L^2$ is given by
$$
F(u) = (u^l)^{(i)}_x,
$$
where $i<k$. Then,
$$
DF(u)h = l(u^{l-1}h)^{(i)}_x.
$$
\end{lemma}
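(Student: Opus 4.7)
The plan is to verify Fréchet differentiability by directly computing $F(u+h) - F(u)$ via the binomial theorem, identifying the linear-in-$h$ term as the candidate for $DF(u)h$, and controlling the remainder in $L^2$ using Lemma \ref{lem222}.

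First I would check that the proposed derivative really is a bounded linear map $\pHa{k} \to L^2$. Since $u \in \pHa{k}$ and we may apply Lemma \ref{lem222} iteratively to the product $u \cdot u \cdots u$ (an $(l-1)$-fold product), we get
$$
\bigl\| (u^{l-1} h)^{(i)}_x \bigr\|_{L^2} \le C \| u^{l-1} \|_{\pHa{k}} \| h \|_{\pHa{k}} \le C \| u \|_{\pHa{k}}^{l-1} \| h \|_{\pHa{k}},
$$
provided we first observe that iterated application of Lemma \ref{lem222} (with $i=0$ up to $i=k-1$, covering all derivatives up to order $k-1$) gives a Banach algebra type bound $\| fg \|_{H^{k-1}} \le C \|f\|_{\pHa{k}} \|g\|_{\pHa{k}}$. (In one dimension this is essentially the Sobolev embedding $H^1 \hookrightarrow L^\infty$.) Hence $h \mapsto l (u^{l-1} h)^{(i)}_x$ is bounded and linear.

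Next, expand by the binomial theorem:
$$
(u+h)^l - u^l - l u^{l-1} h = \sum_{j=2}^{l} \binom{l}{j} u^{l-j} h^j.
$$
Differentiating $i$ times in $x$ and taking $L^2$-norms, the $i$-th derivative commutes with the finite sum, so
$$
\bigl\| F(u+h) - F(u) - l(u^{l-1}h)^{(i)}_x \bigr\|_{L^2}
\le \sum_{j=2}^{l} \binom{l}{j} \bigl\| (u^{l-j} h^j)^{(i)}_x \bigr\|_{L^2}.
$$
For each $j \ge 2$, iterated application of Lemma \ref{lem222} (again using the Banach-algebra-type product estimate to control the $(l-j)$-fold and $j$-fold products in $\pHa{k}$, followed by the estimate \eqref{eq-pro} with this $i$) yields
$$
\bigl\| (u^{l-j} h^j)^{(i)}_x \bigr\|_{L^2}
\le C \| u \|_{\pHa{k}}^{l-j} \| h \|_{\pHa{k}}^{j}.
$$
Since $j \ge 2$, summing gives a bound of the form $C(\|u\|_{\pHa{k}}) \| h \|_{\pHa{k}}^{2}$, which is $o(\|h\|_{\pHa{k}})$ as $h \to 0$ in $\pHa{k}$.

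This is precisely the definition of Fréchet differentiability, and identifies $DF(u)h = l(u^{l-1}h)^{(i)}_x$. The main (minor) obstacle is the iterated use of Lemma \ref{lem222} to estimate products of many factors: this is routine given the one-dimensional Sobolev algebra structure, but must be applied carefully to keep track of the exponents of $\|u\|_{\pHa k}$ and $\|h\|_{\pHa k}$. Continuity of $u \mapsto DF(u)$ as a map into $\mathcal{B}(\pHa{k}, L^2)$ then follows from the same product estimate applied to $u_1^{l-1} - u_2^{l-1} = (u_1 - u_2) \sum_{j=0}^{l-2} u_1^{j} u_2^{l-2-j}$, showing that $F$ is in fact $C^1$ (and by iteration, smooth).
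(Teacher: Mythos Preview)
Your proof is correct and follows essentially the same approach as the paper: both expand $(u+h)^l - u^l$ via the binomial theorem to isolate the linear term $l u^{l-1} h$ and a remainder of order $\|h\|^2$, then invoke Lemma~\ref{lem222} (and Lemma~\ref{lem221}) to control the derivatives of the products. Your write-up is more explicit about the iterated product estimates and about continuity of $u\mapsto DF(u)$, but the underlying argument is identical.
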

\begin{proof}
We notice that
$$
(u+h)^l - u^l = h \sum_{j=0}^{l-1}(u+h)^j u^{l-1-j}
= l h u^{l-1} + h^2 \sum_{j=2}^l \binom{l}{j} h^{j-2}u^{l-j}.
$$
Now, Lemmata \ref{lem221} and \ref{lem222} imply that 
$$
DF(u)h = l( u^{l-1}h)^{(i)}_x.
$$
Moreover, this is a continuous linear operator.
\end{proof}

\bigskip
We want to show that after fixing $ u^*\in \cE^{0,i}$ eq. (\ref{rnst}) for small positive $\delta$ has a solution not belonging to $ \cE^{0,i}$. 
Our subsequent analysis is founded on our results for the case of $\delta=0$ obtained in \S 2.1. 

\begin{lemma}\label{k-lem}
Let us suppose $u^*$ is one of the nontrivial solutions constructed in Theorem \ref{t-count}. If $G$ is defined in (\ref{defG}), then:\\
(a) $G$ is a $C^1$ mapping;\\
(b) $\ker D_2 G(0, u) = \{0\}$;\\
(c) $D_2 G(0, u):\cX \to \Ltwod$ 
is a linear isomorphism.
\end{lemma}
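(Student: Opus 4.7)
The three assertions have different flavours: (a) is an exercise in Sobolev calculus using Lemmata \ref{lem221}--\ref{lem223}; (c) reduces essentially to Proposition \ref{p2.18} once $\cX$ is identified with the domain $D(\cL_0)$; and (b) is where the ``freezing'' condition $u(0)=0$ that defines $\cX$ plays its decisive role by removing the shift-generated zero mode.

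For (a), I would split
\[
G(\de, u) = \de\, u u_x + \Delta^2 \Delta_D u - \Delta^2 W'(u)\,.
\]
The second summand is the bounded linear operator $\partial_x^6:\cX\to\Ltwod$, hence trivially $C^1$. For the convective term $\de\, u u_x$, Lemma \ref{lem222} with $k=6$, $i=1$ gives $\|u u_x\|_{L^2}\le C\|u\|_{\pHa6}^2$, and Lemma \ref{lem223} with $l=2$, $i=1$ identifies the Fr\'echet derivative with respect to $u$ as $h\mapsto\de(uh)_x$; joint continuity in $(\de, u)$ is then routine. For the semilinear term $\Delta^2 W'(u)$, writing $W'$ as a cubic polynomial and applying Leibniz's rule to $\partial_x^4$ produces a finite sum of products $u^{j_0}(u_x)^{j_1}\cdots(u_{xxxx})^{j_4}$; Lemmata \ref{lem221}--\ref{lem223} applied repeatedly show that each such product is a $C^1$ map $\pHa6\to L^2$, yielding $D_u[\Delta^2 W'(u)]h=\Delta^2(W''(u)h)$.

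For (b), the preceding computation gives $D_2 G(0, u^*)\tilde w = \Delta^2(\Delta\tilde w - W''(u^*)\tilde w) = \cL_{u^*}\tilde w$ from \eqref{lop}. Any $\tilde w\in\cX$ in the kernel is $L$-periodic with zero mean, so Theorem \ref{t-1dke} forces $\tilde w = c\, u^*_x$ for some $c\in\bR$. The canonical representative $u^*$ of the family $\cE^{0,i}$ is selected with $u^*(0)=0$, and since $u^*$ is nontrivial Proposition \ref{pr-sgnv} guarantees that this zero is simple, i.e.\ $u^*_x(0)\ne 0$. The boundary condition $\tilde w(0)=0$ built into $\cX$ then forces $c=0$, so $\tilde w\equiv 0$.

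For (c), the space $\cX$ coincides with $D(\cL_0)=\pHa6\cap H^1_0(0,L)$ from \eqref{de-lo}, and on this domain $D_2 G(0, u^*)=\cL_0$. Proposition \ref{p2.18}, combined with the trivial kernel just established in (b), provides surjectivity onto $\Ltwod$ via the self-adjoint equivalence with $\cM_0$; continuity of the inverse then follows from the open mapping theorem applied between the Banach spaces $\cX$ and $\Ltwod$. The substantive step is (b): it hinges on the one-dimensionality of $\ker\cL_{u^*}$ from Theorem \ref{t-1dke} together with the fact that the slice $\{u:u(0)=0\}$ intersects the shift orbit of $u^*$ transversely, courtesy of Proposition \ref{pr-sgnv}. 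Everything else is either routine Sobolev calculus or an invocation of Proposition \ref{p2.18}.
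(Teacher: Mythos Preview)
Your proposal is correct and follows essentially the same route as the paper's own proof: part (a) via Lemmata \ref{lem221}--\ref{lem223}, part (b) via Theorem \ref{t-1dke} plus Proposition \ref{pr-sgnv} to kill the shift mode $u^*_x$ through the constraint $u^*_x(0)\neq 0$, and part (c) by identifying $D_2G(0,u^*)$ with $\cL_0$ on $\cX=D(\cL_0)$ and invoking Proposition \ref{p2.18}. Your write-up is somewhat more explicit (for instance, you spell out that the representative $u^*$ is chosen with $u^*(0)=0$, and you name the open mapping theorem for the continuity of the inverse), but the logical skeleton is identical.
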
 

\begin{proof}
By Lemma \ref{lem223}
$D_2G(\delta, u): \cX  \to \Ltwod$ has the following form:
\begin{equation*}
    D_2 G(\delta, u)h = \delta (u h)_x + \Delta^3 h 
+ \Delta^2(W''(u))h).
\end{equation*}
We recall that $G$ is linear with respect to $\delta$ hence $D_1G$ exists. Moreover, 
Lemma \ref{lem223} yields continuity of $D_2G$.

In order to prove part (b) we recall that by Theorem \ref{t-1dke} any element of the kernel, $w$, must be of the form $w = c u_x$. Since all zeros of the non-trivial steady states of (\ref{eqn:hcch}) are simple, see 
Proposition \ref{pr-sgnv}, we infer that $w$ does not belong to $\cX$ unless $w=0$.

For a proof of part (c) we {notice that $D_2G(0,u) = \cL_0$, where $\cL_0$ is defined in (\ref{de-lo}), and we have to}
address the question of solvability of 
$\cL_0 h =f$  
in $\cX$ for any $f\in \Ltwod$.
{For this purpose we recall Proposition \ref{p2.18}, which guarantees that  $\cL_0$ has  a  trivial kernel in $\cX$ and its image is $\Ltwod$. Thus, we deduce that the inverse of $\cL_0$ is a continuous operator.}
\end{proof}

After these preparations we may show that
for all small $\de>0$ there are $n$ families of steady states 
$\{u^{\delta,i}\}_{i=1}^n$ of \eqref{eqn:hcch}, analogous to the situation for $\de=0$. The theorem below introduces a constraint on $\delta$ coming from an application of the Implicit Function Theorem.
This is notable as it is one of the very few times that we are required to restricted the magnitude of $\delta$.
\begin{theorem}\label{t-sm-dep}
Let us suppose that $u^i$ is representative of a non-trivial  steady state constructed in Theorem \ref{t-count} belonging to $\cX$, when 
$L \in [L_0,\infty) \setminus E$.
Then, there exists a $\delta_0>0$ such that for $|\de|<\delta_0$ there is a $C^1$ map
$\delta\mapsto u^{\de,i} \in \cX$ 
such that $G(\de, u^{\de,i}) =0$.
Moreover, we have the estimate 
 \begin{equation}\label{co-1}
  \| u^{\delta,i} - u^i \|_{\cX} \le C\delta.
 \end{equation}
\end{theorem}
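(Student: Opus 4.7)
The plan is to apply the Implicit Function Theorem to the map $G:(-1,1)\times\cX\to\Ltwod$ at the base point $(0,u^i)$, and then extract the quantitative Lipschitz bound \eqref{co-1} from the standard IFT argument.

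First I would verify the three IFT hypotheses. Lemma \ref{k-lem}(a) supplies $G\in C^1$. For $G(0,u^i)=0$: because $u^i\in\cX$ satisfies $u^i(0)=u^i(L)=0$, we have $\Delta_D u^i=\Delta u^i$, and since $u^i$ solves \eqref{rd0ss} by Theorem \ref{t-count}, the $\delta=0$ piece of $G$ vanishes. Lemma \ref{k-lem}(c) gives that $D_2G(0,u^i)=\cL_0:\cX\to\Ltwod$ is a linear isomorphism. The hypothesis that a representative of the family $\cE^{0,i}$ lives in $\cX$ is no loss of generality: the translation invariance of \eqref{rd0ss} together with the simplicity of zeros established in Proposition \ref{pr-sgnv} allows us to translate $u^i$ so that $u^i(0)=0$, with $u^i(L)=0$ following by periodicity. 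With these three ingredients, the IFT produces $\delta_0>0$ and a $C^1$ curve $\delta\mapsto u^{\delta,i}\in\cX$ on $(-\delta_0,\delta_0)$ with $u^{0,i}=u^i$ and $G(\delta,u^{\delta,i})=0$. Since $u^{\delta,i}\in\cX$ implies $\Delta_D u^{\delta,i}=\Delta u^{\delta,i}$, this equation is exactly \eqref{rnst}.

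For the estimate \eqref{co-1}, differentiating $G(\delta,u^{\delta,i})=0$ in $\delta$ gives
\[
\partial_\delta u^{\delta,i} = -\bigl[D_2G(\delta,u^{\delta,i})\bigr]^{-1} D_1G(\delta,u^{\delta,i}).
\]
By continuity of $D_2G$ and the openness of the set of invertible operators, $[D_2G(\delta,u^{\delta,i})]^{-1}:\Ltwod\to\cX$ stays uniformly bounded on a (possibly smaller) neighbourhood of $0$. The source $D_1G(\delta,u)=uu_x$ stays bounded in $\Ltwod$ along the curve by continuity of $\delta\mapsto u^{\delta,i}$ in $\cX$ together with Lemma \ref{lem222}. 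Hence $\|\partial_\delta u^{\delta,i}\|_{\cX}\le C$ uniformly on this neighbourhood, and integrating from $0$ to $\delta$ yields $\|u^{\delta,i}-u^i\|_{\cX}\le C|\delta|$.

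The only non-trivial ingredient is Lemma \ref{k-lem}(c), which was already dispatched through the equivalence $\cL_0\leftrightarrow\cM_0$ and the self-adjointness of the latter, so the present theorem is essentially IFT bookkeeping. I do not foresee any genuine obstacle beyond keeping track of the norms; the invertibility neighbourhood giving the uniform Lipschitz bound is automatic from $C^1$ dependence of $G$.
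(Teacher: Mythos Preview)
Your proposal is correct and follows essentially the same route as the paper: apply the Implicit Function Theorem at $(0,u^i)$ using the three properties of $G$ established in Lemma~\ref{k-lem}, then read off the Lipschitz estimate from the $C^1$ dependence of the implicit curve. The paper's own proof is in fact terser than yours---it simply cites the IFT and asserts that \eqref{co-1} follows---so your explicit differentiation of the implicit relation and uniform bound on $\partial_\delta u^{\delta,i}$ merely spells out what the paper leaves implicit.
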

\begin{proof}
Applying the implicit function theorem from Nirenberg \cite[Theorem 2.7.2]{NirenbergBook} (note that we checked in Lemma \ref{k-lem} that $D_2G$ satisfies the assumptions of \cite[Theorem 2.7.2]{NirenbergBook}), we find a $C^1$ function $(-\delta_0, \delta_0) \ni \delta \mapsto u^{\delta,i}  \in \cX$ such that $H(\delta, u^{\delta,i}) =0$. In particular, (\ref{co-1}) follows.
\end{proof}
{We may now introduce families of the steady states of (\ref{rnst}),
$$
\cE^{\delta,0}=\{0\},
\quad \cE^{\delta,1}=\{ \tau_s u^{\delta,1}\}_{s\in[0,L)},\ \ldots, \ 
\cE^{\delta,n}=\{ \tau_s u^{\delta,n}\}_{s\in[0,L)},
$$
corresponding to $\cE^{0,0}, \ldots, \cE^{0,n}$ introduced earlier.}

We record below a fact that will be needed in Section 5. Namely, the estimate (\ref{co-1}) in Theorem \ref{t-sm-dep} implies that the equilibrium sets $\cE^{\delta,i}$ converge to $\cE^{0,i}$ as $\delta\rightarrow0$, that is:
\begin{corollary}
\label{rmkHD}
Let $\cE^{0,j}$ be an equilibrium set for \eqref{eqn:hcch} with $\delta=0$. The symbol $\cE^{\delta,j}$ for $\delta>0$ denotes the family of steady states constructed in Theorem \ref{t-sm-dep}.
Then,
\[
\lim_{\delta\rightarrow0} \dist_H(\cE^{\delta,j},\cE^{0,j}) = 0\,,
\]
where $\dist_H$  denotes the Hausdorff distance {in the metric of the ambient space $\pHa6$}.
\end{corollary}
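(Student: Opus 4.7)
The proof should follow quickly from Theorem \ref{t-sm-dep} together with the translation invariance of the $\pHa6$-norm on the torus. My plan is as follows.

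First, I would dispose of the trivial case $j=0$, where $\cE^{\delta,0} = \cE^{0,0} = \{0\}$, so the Hausdorff distance is identically zero. For $j \in \{1, \ldots, n\}$, recall the parametrisation $\cE^{\delta,j} = \{\tau_s u^{\delta,j}\}_{s \in [0,L)}$ and $\cE^{0,j} = \{\tau_s u^{0,j}\}_{s \in [0,L)}$. The key observation is that on the flat torus the shift operator $\tau_s$ is an isometry of $\pHa6$; this is immediate by change of variables, since derivatives commute with $\tau_s$ and the Lebesgue measure is translation invariant. Consequently,
\[
\|\tau_s u^{\delta,j} - \tau_s u^{0,j}\|_{\pHa6} = \|u^{\delta,j} - u^{0,j}\|_{\pHa6}\qquad\text{for every } s \in [0,L).
\]

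Next, I would bound each of the two one-sided Hausdorff distances. For any point $\tau_s u^{\delta,j}$ in $\cE^{\delta,j}$, the companion point $\tau_s u^{0,j}$ lies in $\cE^{0,j}$, so by the isometry identity together with the estimate \eqref{co-1} from Theorem \ref{t-sm-dep} we get
\[
\inf_{v \in \cE^{0,j}} \|\tau_s u^{\delta,j} - v\|_{\pHa6} \le \|u^{\delta,j} - u^{0,j}\|_{\pHa6} \le C\delta.
\]
Taking the supremum over $s \in [0,L)$ controls one side. The reverse direction is identical, pairing $\tau_s u^{0,j}$ with $\tau_s u^{\delta,j}$. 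Combining, we conclude $\dist_H(\cE^{\delta,j}, \cE^{0,j}) \le C\delta$, which yields the required limit as $\delta \to 0$.

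There is essentially no obstacle here: the entire content is packaged into Theorem \ref{t-sm-dep}, and the only extra ingredient is that the canonical parametrisation of each equilibrium loop by the shift provides, for each point of one loop, an \emph{a priori} good candidate in the other loop (namely, the point with the same shift parameter). Were one to instead compute the infimum genuinely by optimising over $s$, one would recover a possibly smaller bound, but the naive pairing already gives the linear-in-$\delta$ convergence, which is more than sufficient for the qualitative statement of the corollary.
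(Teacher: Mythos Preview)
Your argument is correct and is precisely the natural unpacking of what the paper asserts: the paper does not give a separate proof but simply records the corollary as an immediate consequence of estimate \eqref{co-1}, and your use of the translation invariance of the $\pHa6$-norm to transport that pointwise bound to the whole shifted family is exactly the intended (and only reasonable) mechanism.
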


\bigskip
{If $u^*$ is a solution to (\ref{rnst}), then the linearisation of the LHS of this eq.\! is denoted by $\cL^\delta_{u^*}$ and the superscript $\delta$ will be never suppressed to distinguish between $\cL^\delta_{u^*}$ and $\cL_{u^*} = \cL^0_{u^*}$. Let us also note that
$\cL^\delta_{u^*}h = \delta (u^* h)_x + \cL_{u^*}h$ for $h\in \pHa6$. When we want to stress that $u^*$ is in the family $\cE^{\delta, i}$, then we write $\cL_i^\delta$.

By definition of the families $\cE^{\delta,i}$, any two of its members $u^{\de,i}_1$ and $u^{\de_i}_2$,  are related by a shift, i.e.,} there exists an $s\in[0,L)$ such that
\[
	u^{\de,i}_2(x) = \tau_{s}u^{\de,i}_1(x)
	\,.
\]
Therefore, 
for the operators $\cL_{u^{\de,i}_1}^\delta$, $\cL_{u^{\de,i}_2}^\delta$, we have
\[
	\cL_{u^{\de,i}_2}^\delta w =
	\cL_{\tau_su^{\de,i}_1}^\delta w  =
	\left(\tau_s\cL_{u^{\de,i}_1}^\delta\right) w\,.
\]
In this way, we may think of the family of linearisations around a given family generated by the action of the shift operator on a given member $u^{\de,i}$ as being itself also generated by the action of the shift operator on the linearisation $\cL_{u^{\de,i}_1}^\delta$ of the representative.

{In our further analysis we will need information about the kernel of} 
$\cL_i^\delta$. 
We notice that the
projections onto the eigenspaces of $\cL^\de_i$ depend continuously on $\de$.
In particular, \cite[Ch. IV, Theorem 3.16]{kato} implies the following fact, {which notably brings a second restriction on the size of $\delta$}.
\begin{lemma}\label{co-dim0}
There exists a $\delta_1{\in(0, \delta_0]}$ such that for $|\delta| < \delta_1$ and $L\in [L_0, \infty)\setminus E$, 
the zero-eigenspace for $\cL^\de_i$
corresponding to the steady states $\cE^{\delta,i}$, for all $i=1,\ldots, n$, has dimension one. The zero-eigenspace for$\cL^\de_0$ has dimension zero.
\end{lemma}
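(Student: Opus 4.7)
The plan is to reduce the claim to Kato's classical spectral perturbation theorem \cite[Ch.~IV, Thm.~3.16]{kato} applied to the one-parameter family $\delta\mapsto \cL^\delta_i$. The argument rests on two inputs already available in the paper: (i) by Proposition~\ref{p2.18}, each $\cL^0_i$ is similar (via $\Delta$) to the self-adjoint operator $\cM^i$ with discrete real spectrum, so algebraic and geometric multiplicities of its eigenvalues coincide; and (ii) by Theorem~\ref{t-sm-dep}, $\delta\mapsto u^{\delta,i}$ is $C^1$ into $\cX\subset\pHa6$, so the coefficients appearing in
\[
\cL^\delta_i h=\delta\,(u^{\delta,i} h)_x+\Delta^3 h-\Delta^2\bigl(W''(u^{\delta,i})h\bigr)
\]
depend continuously on $\delta$ and the difference $\cL^\delta_i-\cL^0_i$ is a lower-order differential operator that is relatively bounded with respect to $\Delta^3$. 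This places $\{\cL^\delta_i\}_{|\delta|<\delta_0}$ in a framework to which Kato's continuity of the Riesz projection directly applies.

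I would first record the baseline $\delta=0$ spectral picture at $0$. For $i\in\{1,\ldots,n\}$, Theorem~\ref{t-1dke} gives $\ker\cL^0_i=\mathrm{span}\{(u^i)_x\}$, and by the similarity with the self-adjoint $\cM^i$ the eigenvalue $0$ is simple (algebraic multiplicity one) and isolated. For $i=0$, since $L\in[L_0,\infty)\setminus E$ excludes the multiples of $2\pi$, a direct Fourier computation (the one underlying Corollary~\ref{cor-2.10}(b), using $W''(0)=-1$) shows that $0$ is not in the spectrum of $\cL^0_0$ at all.

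Next, pick a small circle $\gamma\subset\bC$ around $0$ that encloses no other points of $\sigma(\cL^0_i)$. By \cite[Ch.~IV, Thm.~3.16]{kato} there exists $\delta_1\in(0,\delta_0]$ such that for all $|\delta|<\delta_1$ the curve $\gamma$ avoids $\sigma(\cL^\delta_i)$, and the Riesz spectral projection
\[
P^\delta_i=-\frac{1}{2\pi i}\oint_\gamma (\cL^\delta_i-\lambda)^{-1}\,d\lambda
\]
varies continuously in $\delta$; in particular $\dim \Rg P^\delta_i=\dim\Rg P^0_i$. For $i=0$ this dimension is $0$, so $\gamma$ encloses no eigenvalue of $\cL^\delta_0$ and in particular $\ker\cL^\delta_0=\{0\}$. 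For $i\in\{1,\ldots,n\}$ this dimension equals $1$. Shift invariance of $\cE^{\delta,i}$ supplies a nonzero element of $\ker\cL^\delta_i$: differentiating $G(\delta,\tau_s u^{\delta,i})=0$ in $s$ at $s=0$ gives $\cL^\delta_i (u^{\delta,i})_x=0$, and $(u^{\delta,i})_x\not\equiv 0$ by the $H^6$-closeness estimate \eqref{co-1} together with Proposition~\ref{pr-sgnv} applied to $u^i$. Therefore $\ker\cL^\delta_i$ contains $\mathrm{span}\{(u^{\delta,i})_x\}$; since $\dim\Rg P^\delta_i=1$, this inclusion is equality and simultaneously rules out generalised eigenvectors. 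Taking $\delta_1$ as the smallest value obtained across the finitely many families $i=0,1,\ldots,n$ finishes the proof.

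The main obstacle is the bookkeeping in Step~2: one must verify that $\{\cL^\delta_i\}$ really is a continuous family in Kato's sense despite the fact that both the drift coefficient $u^{\delta,i}$ and the potential $W''(u^{\delta,i})$ move with $\delta$. This is handled by the Lipschitz bound $\|u^{\delta,i}-u^i\|_{\pHa6}\le C\delta$ from Theorem~\ref{t-sm-dep} combined with the multiplier/composition estimates of Lemmata~\ref{lem221}--\ref{lem223}, which show that $\cL^\delta_i-\cL^0_i\to 0$ in the operator norm from $\pHa6$ into $\Ltwod$. With that in hand, Kato applies verbatim and the multiplicity count above is routine.
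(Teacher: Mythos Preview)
Your proposal is correct and follows the same route as the paper: both invoke Kato's \cite[Ch.~IV, Thm.~3.16]{kato} after verifying that $\cL^\delta_i-\cL^0_i$ is relatively bounded with relative bound $O(\delta)$, using the $\pHa6$-Lipschitz estimate \eqref{co-1} on $u^{\delta,i}$. Your version is in fact slightly more careful than the paper's, since you explicitly pin the perturbed eigenvalue at $0$ for $i\ge1$ by exhibiting $(u^{\delta,i})_x$ as a nonzero kernel element via shift invariance---a step the paper's proof tacitly assumes when it jumps from ``total multiplicity inside $\gamma$ equals one'' to ``$\ker\cL^\delta_i$ is one-dimensional.''
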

\begin{proof}
First, let us assume that $\delta_1 \le \delta_0$ where $\delta_0$ is as in Theorem \ref{t-sm-dep}.
Then, using Theorem \ref{t-sm-dep}, the map $\delta\mapsto u^{\de,i}\in \cX$ is continuous for each $i$. We analyse only the case $i>0$.
We aim now to prove that the difference $\cL^\delta_i - \cL^0_i = A$ is a
relatively bounded bounded operator (with respect to $\cL^0_i$).
That is:
\begin{equation}\label{co-reb}
\|A w \| \le a \|w\| + b\|\cL^0_i w\|\,, 
\end{equation}
where $\|\cdot \|$ is the $L^2$ norm. We find
\[
	A w = \Delta^2((W''(u^{i,0}) - W''(u^{i,\delta}))w) + \delta (u^{i,\delta} w_x + w u^{i,\delta})
	    = 3\Delta^2((u^{i,0})^2 - (u^{i,\delta})^2)w)+ \delta (u^{i,\delta} w_x + w u^{i,\delta})
	\,.
\]
Of course, we can find $a$ and $b$ depending on $u^{i,0}$, $u^{i,\delta}$, $\delta$ and $L$ such that
$$
\| A w\| \le a'  \|w\| + b'\| \Delta^2w\| 
\le a \|w\| + b\|\cL^0_i w\|\,.
$$
Moreover, due to \eqref{co-1} we may can deduce that the dependence of $a$ and $b$ on $\delta$ is linear; that is,
\begin{equation}\label{co-2}
 \| A w\| \le C\delta ( \|w\| + \| \cL^0_i w\|)\,,
\end{equation}
where now $C$ depends only on
$u^{i,0}$, $u^{i,\delta}$, and $L$.
We wish to apply \cite[Ch. IV, Theorem 3.16]{kato} to conclude that there exists a $\tilde\delta>0$ such that for $|\delta|<\tilde\delta$ the kernel of $\cL^\delta_i$ is one-dimensional for $u^{i,\delta} \ne0$ and trivial  otherwise.

Due to \cite[Ch. IV, Theorem 2.23 (c)]{kato} it is sufficient to show that, if we fix $\lambda_0$ in the resolvent set $\rho(\cL_i^0)$ of $\cL_i^0$, then 
we have $\lambda_0\in \rho(\cL^\delta_i)$ and
\begin{equation}\label{co-re}
 \| (\cL^\de_i - \lambda_0)^{-1} - (\cL^0_i - \lambda_0)^{-1}\| \to 0\quad\hbox{ as }\quad \delta \to 0.
\end{equation}
Before establishing \eqref{co-re} we will show that $\lambda_0\in \rho(\cL^\delta_i)$.
We note that
$$
\cL^\de_i - \lambda_0 = \cL^0_i - \lambda_0 + (\cL^\de_i - \cL^0_i) = 
(Id +  (\cL^\de_i - \cL^0_i)  (\cL^0_i - \lambda_0)^{-1}) (\cL^0_i - \lambda_0).
$$
Now, \eqref{co-2} implies that $\|(\cL^\de_i - \cL^0_i)  (\cL^0_i - \lambda_0)^{-1}\| <1$ for small $\delta$, so $\lambda_0\in \rho(\cL^\delta_i)$ follows.

In order to show \eqref{co-re} we write,
\begin{align*}
(\cL^\de_i -\lambda_0)^{-1} - (\cL^0_i -\lambda_0)^{-1} &=
( \cL^0_i  -\lambda_0 + \cL^\de_i- \cL^0_i )^{-1} - (\cL^0_i -\lambda_0)^{-1}
\\&=
(Id  + (\cL^\de_i- \cL^0_i) (\cL^0_i -\lambda_0)^{-1} - Id) (\cL^0_i -\lambda_0)^{-1}
\,.
\end{align*}
Thus, our claim follows again from \eqref{co-2}.
Taking $\delta_1 = \min\{\delta_0,\tilde\delta\}$ finishes the proof.
\end{proof}
With the help of this Lemma we can show that $u=0$ is an isolated steady state of (\ref{eqn:hcch}) for $\delta>0$.
\begin{proposition}
Suppose $|\delta|<\delta_1$ and $L\in [0,\infty)\setminus E$.
Then $u=0$ is an isolated solution of
\begin{equation}\label{eq:delta}
0=\delta u u_x + \Delta^3 u- \Delta^2 W'(u)
\end{equation}
in 
$\pHa6$.
\end{proposition}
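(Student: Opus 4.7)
The plan is to apply the Inverse Function Theorem to the map
\[
F:\pHa{6}\to\Ltwod,\qquad F(u) = \delta u u_x + \Delta^3 u - \Delta^2 W'(u),
\]
at the point $u=0$. The map $F$ is of class $C^1$ near $0$ by Lemmata \ref{lem221}--\ref{lem223}, exactly as in Lemma \ref{k-lem}(a). Since the linearisation of $u\mapsto\delta uu_x$ vanishes at $u=0$ and $W''(0)=-1$, the derivative is
\[
DF(0)\,h = \Delta^3 h + \Delta^2 h,
\]
and in particular it is independent of $\delta$, so the hypothesis $|\delta|<\delta_1$ plays no active role beyond consistency with earlier sections.

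The crux is to show that $DF(0)$ is a bounded linear isomorphism $\pHa{6}\to\Ltwod$. Injectivity follows at once from Lemma \ref{co-dim0} applied to the trivial family $\cE^{\delta,0}$. For surjectivity with continuous inverse, I would diagonalise $DF(0)$ in the Fourier basis $\{e^{2\pi i kx/L}\}_{k\in\bZ\setminus\{0\}}$: the operator acts as multiplication by $\mu_k = \xi_k^4(1-\xi_k^2)$, where $\xi_k = 2\pi k/L$. These eigenvalues vanish precisely at $\xi_k=\pm 1$, that is, when $L\in 2\pi\bN$, a case excluded by $L\notin E$ since $E\supseteq\{2\pi k\}_{k\in\bN}$. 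A direct symbol comparison then shows that $(1+\xi_k^{12})/|\mu_k|^2$ is bounded uniformly in $k$, which gives boundedness of the inverse $\Ltwod\to\pHa{6}$. This is also in accord with the self-adjoint reformulation of Proposition \ref{p2.18}.

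With $DF(0)$ an isomorphism, the Inverse Function Theorem yields a neighbourhood $U$ of $0$ in $\pHa{6}$ on which $F$ is a $C^1$ diffeomorphism onto its image; in particular $\{u\in U:F(u)=0\}=\{0\}$, which is exactly the isolation claim. The only substantive step is the bijectivity of $DF(0)$: injectivity is handed to us by Lemma \ref{co-dim0}, so the main obstacle reduces to the uniform lower bound on $|\mu_k|$ that encodes the exclusion of the resonant periods $L=2\pi k$ by the set $E$.
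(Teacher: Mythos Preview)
Your proof is correct and, in fact, cleaner than the paper's own argument. Both rely on the key input that $DF(0)=\cL_0^\delta=\Delta^3+\Delta^2$ has trivial kernel on $\pHa6$ when $L\notin 2\pi\bN$ (which is recorded in Lemma~\ref{co-dim0} / Corollary~\ref{cor-2.10} and which you verify again via the Fourier symbol), but the two proofs deploy this fact differently. The paper argues by contradiction through shift invariance: a hypothetical nearby non-trivial zero $v$ would produce $v_x$ in the kernel of the linearisation \emph{at $v$}, which must be trivial by a perturbation of Lemma~\ref{co-dim0}; this last step requires an (implicit) continuity argument, since Lemma~\ref{co-dim0} literally speaks only about the linearisation at the known equilibria, not at an arbitrary nearby $v$. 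Your route bypasses this entirely: once you have shown that $DF(0)$ is a bounded isomorphism $\pHa6\to\Ltwod$ by the explicit symbol estimate, the Inverse Function Theorem immediately gives local injectivity of $F$, hence isolation of the zero solution. Your observation that $DF(0)$ is independent of $\delta$ is also a nice clarification --- the bound $|\delta|<\delta_1$ is indeed inert here, and the size of the isolating neighbourhood depends on $\delta$ only through the nonlinear part of $F$.
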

\begin{proof}
Let us suppose otherwise. Then there exists a $v\in \pHa6$ solving (\ref{eq:delta}), that is close to $u=0$. Of course, $v$ is shift invariant, i.e. $\tau_s v$ is another solution to (\ref{eq:delta}). Hence, the argument which we used earlier to analyse \ref{rdss} and \ref{lop}
implies that $v_x$ is in the kernel of $\cL_u$, i.e., the linearised operator at $v$. 

However, by Lemma \ref{co-dim0} the linearised operator has trivial kernel for $|\delta|<\delta_1$. This contradicts the observation above.
\end{proof}

\section{A reformulation of \texorpdfstring{\eqref{eqn:hcch}}{(1.4)}}
\label{s-bare}

We would like to recast our problem \eqref{eqn:hcch} in the framework of abstract
theory for analytic semigroups and dynamical systems, which facilitates
application of the theory exposed in \cite{robinson}.
The key goal of the present section is to define the function spaces required
and study the properties of relevant maps on these spaces.

Our equation \eqref{eqn:hcch} can be viewed from different perspectives. Here, we will
write it as an abstract dynamical system: 
\begin{equation}\label{rn:ds}
 \begin{array}{l}
 \dot z + A z = F_\de(z), \\
 z(0) = z_0.
 \end{array}
\end{equation}
In our presentation we will follow the theory of Henry \cite{henry}.

We consider this equation in $Z_0 = \Ltwod$ or $Z_2 = \pHa2$ depending upon our needs.
The choice of $Z_0$ seems more natural, when presenting the results of \cite{kory12} and \cite{konary12} in the unifying framework of analytic semigroup theory. On the other hand, the choice $Z_2= \pHa2$ is stipulated by the  construction of a global attractor in \cite{konary15} and the use of the theory exposed in \cite{robinson}, especially in Subsection \ref{subsec-d} of the present paper.

Since we are using an abstract framework, we can develop both cases $Z_0$ and $Z_2$ simultaneously, at least for some time.
We define $A: D(A)\subset Z_k \to Z_k$ by the following formula:
\begin{equation}\label{rn:da}
Az  = - \Delta^3 z\qquad\hbox{for }z \in D(A) = \pHa{6+k}(\bT)\subset Z_k\,,
\end{equation}
where $k=0$ or $k=2$. 

Since the domain $\bT$ is { a flat torus}, an easy argument based on Fourier series implies:
\begin{corollary}\label{lem30}
The operator $A:D(A)\subset Z_k\to Z_k$ is sectorial \footnote{See  \cite[\S 1.3]{henry} for a definition.}.
\end{corollary}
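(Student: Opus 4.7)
The strategy is to exploit the Fourier diagonalisation of $A$ on the flat torus. Set $\omega_n = 2\pi n/L$ and $e_n(x) = L^{-1/2} e^{i\omega_n x}$ for $n\in\bZ$. Every $u\in \Ltwod$ admits the Fourier expansion $u = \sum_{n\ne 0} \hat u_n e_n$, and for $k\in\{0,2\}$ the space $Z_k = \pHa{k}$ carries an equivalent norm $\|u\|_k^2 = \sum_{n\ne 0}\omega_n^{2k}|\hat u_n|^2$. Since $-\Delta^3 e_n = \omega_n^6 e_n$, the operator $A$ is a Fourier multiplier. Hence on each $Z_k$ it is densely defined, self-adjoint and positive, with purely discrete spectrum $\sigma(A) = \{\omega_n^6 : n\in \bZ\setminus\{0\}\} \subset [\omega_1^6,\infty)$, where $\omega_1^6 = (2\pi/L)^6 > 0$.

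For every $\lambda\in\bC\setminus\sigma(A)$, the resolvent acts as the Fourier multiplier $(\lambda-A)^{-1}e_n = (\lambda-\omega_n^6)^{-1}e_n$, and since multiplication by $\omega_n^{2k}$ commutes with multiplication by $(\lambda-\omega_n^6)^{-1}$ in Fourier space, one obtains
\[
\|(\lambda-A)^{-1}\|_{Z_k\to Z_k} \;=\; \sup_{n\ne 0} |\lambda-\omega_n^6|^{-1} \;=\; \dist(\lambda,\sigma(A))^{-1}\,.
\]

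Next, I would fix any $a<\omega_1^6$ and any $\theta\in(\pi/2,\pi)$, and note that because $\sigma(A)$ lies on the real ray $[\omega_1^6,\infty)\subset (a,\infty)$, elementary planar geometry yields
\[
\dist(\lambda,\sigma(A)) \;\ge\; \sin(\pi-\theta)\,|\lambda-a|\qquad\text{for every }\lambda\in S_{a,\theta}:=\{\mu\ne a : \theta\le|\arg(\mu-a)|\le\pi\}\,.
\]
Combining this with the resolvent identity gives $\|(\lambda-A)^{-1}\|_{Z_k\to Z_k}\le M/|\lambda-a|$ with $M=1/\sin(\pi-\theta)$, which is precisely the sectorial estimate in the sense of \cite[\S 1.3]{henry}.

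There is essentially no obstacle here: the argument amounts to the standard observation that any self-adjoint operator bounded below is sectorial, with Henry's constants being read off directly from the spectrum of $A$. The one point worth highlighting is that the Fourier multiplier structure allows the analysis for $k=0$ to carry over verbatim to $k=2$, since $A$ commutes with $-\Delta$ and the $Z_k$ norm is just a weighted $\ell^2$ sum over the same Fourier coefficients; in particular no separate argument is needed for the two choices of base space used in the paper.
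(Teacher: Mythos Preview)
Your argument is precisely the Fourier-series diagonalisation the paper alludes to (the paper itself supplies no details beyond that phrase), and the structure is correct: $A$ is a positive self-adjoint Fourier multiplier on each $Z_k$, hence sectorial. One small slip to correct: Henry's definition \cite[Def.~1.3.1]{henry} requires the half-angle to lie in $(0,\pi/2)$, not $(\pi/2,\pi)$; taking $\theta\in(0,\pi/2)$ the same planar geometry gives $\dist(\lambda,\sigma(A))\ge |\lambda-a|\sin\theta$ on $S_{a,\theta}$, so $M=1/\sin\theta$ and the resolvent estimate holds on a sector strictly larger than a half-plane, as is needed for $-A$ to generate an analytic semigroup.
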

Then, following \cite{henry}, we define $A^\alpha$, the fractional powers of
$A$. Once we have them, we define $Z_k^\alpha$, the fractional powers of
$Z_k$: They are equal to $D(A^\alpha)$ equipped with the graph norm.
Actually, we can identify them with Sobolev spaces on a torus.
\begin{lemma}
If  $\alpha >0$, then $Z_k^\alpha = \pHa{6\alpha+k}(\bT)$.
\end{lemma}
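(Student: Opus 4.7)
The plan is to exploit the fact that on the flat torus $\bT = \bR/L\bZ$ the operator $A=-\Delta^3$ is diagonalised by Fourier series, so both $D(A^\alpha)$ and $\dot H^{6\alpha+k}_{per}$ reduce to weighted $\ell^2$-conditions on Fourier coefficients, and the task is just to show that the two weights are comparable.

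First, I would write any $u\in Z_k$ in its Fourier series $u = \sum_{n\neq 0}\hat u(n)\phi_n$ with $\phi_n(x) = L^{-1/2}e^{2\pi i n x/L}$, where the sum runs over $n\neq 0$ because elements of $Z_k$ have zero mean. A direct computation gives $A\phi_n = \lambda_n\phi_n$ with $\lambda_n = (2\pi n/L)^6$. Hence $A$ is positive and self-adjoint on $Z_k$ (with respect to the $L^2$ inner product for $k=0$, and with respect to the equivalent inner product $\IP{u}{v}_{\dot H^2} = \sum_{n\neq 0}(2\pi n/L)^4 \hat u(n)\overline{\hat v(n)}$ for $k=2$), with discrete spectrum $\{\lambda_n\}_{n\neq 0}$ bounded below by $\lambda_* = (2\pi/L)^6 > 0$.

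Next I would invoke the spectral theorem to define
\[
A^\alpha u = \sum_{n\neq 0}\lambda_n^\alpha\hat u(n)\phi_n,
\]
on its maximal domain. For a self-adjoint operator with discrete positive spectrum the spectral power coincides with Henry's definition via the contour integral of the resolvent (this is standard and amounts to observing that the contour integral collapses on eigenvectors to $\lambda_n^\alpha$, cf.\ \cite[\S 1.4]{henry}). Since $\lambda_*>0$, the graph norm on $D(A^\alpha)$ is equivalent to $\|A^\alpha u\|_{Z_k}$.

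Finally, using the standard Fourier characterisation
\[
\|v\|_{\dot H^s_{per}}^2 \sim \sum_{n\neq 0}|n|^{2s}|\hat v(n)|^2,
\]
for $k=0$ I obtain $\|A^\alpha u\|_{L^2}^2 = \sum \lambda_n^{2\alpha}|\hat u(n)|^2 \sim \sum |n|^{12\alpha}|\hat u(n)|^2 \sim \|u\|_{\dot H^{6\alpha}}^2$; and for $k=2$, similarly $\|A^\alpha u\|_{\dot H^2}^2 \sim \sum |n|^{12\alpha+4}|\hat u(n)|^2 \sim \|u\|_{\dot H^{6\alpha+2}}^2$. In both cases this identifies $Z_k^\alpha = \dot H^{6\alpha+k}_{per}(\bT)$ as sets with equivalent norms. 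The only delicate point — reconciling the abstract and spectral definitions of $A^\alpha$ — is bookkeeping rather than a genuine obstacle, thanks to the cleanness of the spectrum on the flat torus.
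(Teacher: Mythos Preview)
Your proof is correct and follows essentially the same approach as the paper: diagonalise $A=-\Delta^3$ via Fourier series on the torus, read off $A^\alpha$ as multiplication by $|\xi|^{6\alpha}$ on the Fourier side, and identify the resulting graph norm with the $\dot H^{6\alpha+k}_{per}$ norm. You have simply been more careful than the paper in distinguishing the $k=0$ and $k=2$ cases and in noting that the spectral and contour-integral definitions of $A^\alpha$ agree---details the paper leaves implicit.
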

\begin{proof}
The operator $A$ is self-adjoint and the Fourier transform makes it diagonal:
$$
\widehat{ (A u)}(\xi) = |\xi|^6 \hat u(x)\,.
$$
Thus, 
$$
\widehat{ (A^\alpha u)}(\xi) = |\xi|^{6\alpha} \hat u(x)\,.
$$
{Since} the fractional power $Z_k^\alpha$ is $D(A^\alpha)$ equipped with the graph norm, 
the above formula {implies} 
that $Z^\alpha_k = \pHa{6\alpha+k}$.
\end{proof}

The nonlinearity appearing in \eqref{eqn:hcch} is here called $F_\de$ (see \eqref{rn:ds}). We recall its expression here:
\begin{equation}\label{rn:nl}
 F_\de(u) =
 \delta uu_x - \Delta^2 W'(u) 
\,.
\end{equation}
We must show that $F_\de:Z_k^\alpha\to Z_k$ is well-defined. Here is our
first observation:
\begin{lemma}
\label{lem33}
If $W$ is given by (\ref{defW}) and $\alpha \in [5/6,1)$, then $F_\de$ defined in \eqref{rn:nl} is of class $C^1$.
\end{lemma}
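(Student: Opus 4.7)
The plan is to reduce the claim to two ingredients: an explicit algebraic decomposition of $F_\delta$ into a linear piece plus polynomial nonlinearities composed with differentiation, and the Banach algebra property of Sobolev spaces on the one-dimensional torus. Since $W'(u) = u^3 - u$, I would first rewrite
\[
F_\delta(u) = \delta u u_x - \Delta^2(u^3) + \Delta^2 u\,.
\]
The linear term $u \mapsto \Delta^2 u$ is plainly $C^\infty$ from $Z_k^\alpha = \dot H^{6\alpha+k}_{per}$ to $Z_k = \dot H^k_{per}$ for $k\in\{0,2\}$: it loses four derivatives, and $6\alpha + k - 4 \ge k$ holds since $\alpha \ge 5/6 > 2/3$.

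Next, I would handle the two nonlinear terms as applications of Lemmata \ref{lem221}, \ref{lem222} and \ref{lem223}. The key observation is that $\dot H^{6\alpha+k}_{per}$ is a Banach algebra on $\bT$ because $6\alpha + k \ge 5 > 1/2$. Consequently, the polynomial maps $u \mapsto u^2$ and $u \mapsto u^3$ are $C^\infty$ from $Z_k^\alpha$ into itself. Post-composing with $\partial_x$ and with $\Delta^2$ respectively gives $C^\infty$ maps into $\dot H^{6\alpha+k-1}_{per}$ and $\dot H^{6\alpha+k-4}_{per}$, both of which embed continuously into $Z_k = \dot H^k_{per}$ under our hypothesis on $\alpha$. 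Lemma \ref{lem223} identifies the Fréchet derivatives explicitly as
\[
DF_\delta(u)[h] = \delta (uh)_x - 3\Delta^2(u^2 h) + \Delta^2 h\,.
\]

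The remaining step is to verify that this derivative is continuous in the operator-norm topology as $u$ varies in $Z_k^\alpha$. This is once more a consequence of the algebra property: the estimate
\[
\| \Delta^2\bigl((u_1^2 - u_2^2) h\bigr) \|_{Z_k} \le C\, \| u_1 + u_2 \|_{Z_k^\alpha} \,\| u_1 - u_2 \|_{Z_k^\alpha} \,\| h \|_{Z_k^\alpha}
\]
follows by writing $u_1^2 - u_2^2 = (u_1 + u_2)(u_1 - u_2)$, applying Lemma \ref{lem222} to bound the product in $\dot H^{6\alpha+k}_{per}$, and then using that $\Delta^2$ is bounded from $\dot H^{6\alpha+k}_{per}$ into $\dot H^{6\alpha+k-4}_{per} \hookrightarrow Z_k$; an analogous estimate handles the term $\delta(uh)_x$. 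Combining these gives $F_\delta \in C^1(Z_k^\alpha, Z_k)$.

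The main point to watch, rather than a genuine obstacle, is bookkeeping: one must confirm that in both cases $k=0$ and $k=2$ the cumulative derivative loss (four from $\Delta^2$, three from the algebra applications in $u^3$) fits inside the budget $6\alpha + k \ge 5$ provided by $\alpha \ge 5/6$, and that the algebra threshold $s > 1/2$ is met at every intermediate step. Because the nonlinearity is polynomial in $u$ with at most four derivatives applied externally, these checks are uniform and the argument goes through with no further assumptions on $\alpha$ beyond those already stated.
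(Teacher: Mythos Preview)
Your argument is correct and matches the paper's approach, which simply notes $Z_k^\alpha \hookrightarrow \dot H^{5+k}_{per}$ for $\alpha \ge 5/6$ and invokes Lemma~\ref{lem223}; you have spelled out the details the paper leaves implicit. One small caveat: the maps $u\mapsto u^2$ and $u\mapsto u^3$ do not send $Z_k^\alpha$ into \emph{itself} (powers need not preserve the zero-mean constraint, e.g.\ $\int_{\bT} u^2\,dx>0$ for $u\not\equiv0$), but this is harmless since the subsequent application of $\partial_x$ or $\Delta^2$ restores zero mean, and only the composite maps enter your argument.
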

\begin{proof} For the given range of $\alpha$ we know that $\pHa5\subset Z^\alpha_2$. Thus, it is sufficient to establish that $F_\delta: \pHa{5+k}\to Z_k$ is continuously differentiable. This follows from Lemma \ref{lem223}. 
\end{proof}
The information we gathered on $A$ as well as on the nonlinearity $F_\delta$ prompts us to state the existence of solutions to (\ref{eqn:hcch}). We have:
\begin{proposition}\label{pr34}
Suppose that $u_0 \in Z_k$. 
For any $\delta\in \bR$ there exists a unique global solution $u$ to (\ref{eqn:hcch}) such that 
$u\in C([0,\infty); Z_k^\alpha)\cap C((0,\infty); D(A))$, $u_t\in C((0,\infty);Z_k)$ and  the parameter variation formula
\begin{equation}\label{eq-convar}
u(t) = e^{\Delta^3 t}u_0 + 
\int_0^t e^{\Delta^3 (t-s)}\left[\delta u(s) u_x(s) - \Delta^2 W'(u(s))\right]\,ds
\end{equation}
holds. Moreover, $S_\delta(t)$ defined as $S_\delta u_0:= u(t)$, where $u$ is the solution, is a strongly continuous semigroup.
\end{proposition}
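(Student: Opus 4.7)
The strategy is to apply the standard local existence theorem for abstract semilinear parabolic equations from Henry's book, and then use a priori estimates already available in the literature to promote local to global existence; the variation of parameters formula and the semigroup properties then follow in the usual way.

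First, I would invoke the abstract local existence theorem \cite[Thm.~3.3.3]{henry}. Its hypotheses are exactly those verified in the excerpt: Corollary \ref{lem30} states that $A = -\Delta^3$ is sectorial on $Z_k$, and Lemma \ref{lem33} shows that the nonlinearity $F_\delta : Z_k^\alpha \to Z_k$ is of class $C^1$ (hence in particular locally Lipschitz) for $\alpha \in [5/6, 1)$. The cited theorem then yields, for every initial datum in $Z_k^\alpha$, a unique mild solution $u$ on a maximal interval $[0, T_{\max})$ with the claimed regularity $u \in C([0, T_{\max}); Z_k^\alpha) \cap C((0, T_{\max}); D(A))$, $u_t \in C((0, T_{\max}); Z_k)$, satisfying the variation of parameters formula (\ref{eq-convar}). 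For data merely in $Z_k$, the analytic smoothing of $e^{-At}$ instantaneously lifts the solution into $Z_k^\alpha$ for any $t > 0$, so the higher regularity statements can be recovered by restarting the $Z_k^\alpha$ theory at an arbitrary positive initial time, combined with continuous dependence in $Z_k$.

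Second, to show $T_{\max} = \infty$ I would appeal to the a priori bounds established in \cite{kory12, konary12, konary15}, which provide $L^\infty(0,T;H^2)$ estimates for solutions of (\ref{eqn:hcch}) depending only on $\|u_0\|$ and on $T$. Heuristically, one tests the equation against $(-\Delta)^{-2}u$ to obtain an identity for the functional $\cF[u]$ plus an error term arising from the destabilising convection $\frac\delta2(u^2)_x$; since this error is of strictly lower differential order than the sixth-order dissipation, it is absorbed by interpolation and Gronwall's lemma, yielding a uniform bound which, combined with Henry's blow-up alternative, excludes finite-time blow-up.

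Third, the formula (\ref{eq-convar}) is part of the construction of the mild solution and extends to $(0,\infty)$ by the global existence just obtained. The semigroup identity $S_\delta(t+s) = S_\delta(t)S_\delta(s)$ follows at once from uniqueness, since both sides satisfy the same integral equation with the same initial datum $S_\delta(s)u_0$. Strong continuity of $S_\delta(\cdot)$ in $Z_k$ is immediate at $t > 0$ from the regularity above, and at $t = 0^+$ reduces to $e^{\Delta^3 t}u_0 \to u_0$ in $Z_k$ plus vanishing of the Duhamel term, both standard for analytic semigroups.

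The principal obstacle is the global a priori bound in the regime $\delta > 0$, where the gradient-flow structure is destroyed and $\cF$ is no longer a Lyapunov functional. The resolution via Gronwall arguments and interpolation against the higher-order dissipation was carried out in \cite{kory12, konary12, konary15}, and I would simply cite these results rather than reproducing the calculations.
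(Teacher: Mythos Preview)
Your proposal is correct and follows essentially the same route as the paper: invoke Henry \cite[Theorem~3.3.3]{henry} for local existence, the variation of parameters formula, and smoothing (using Corollary~\ref{lem30} and Lemma~\ref{lem33}), then cite the a~priori estimates of \cite{kory12,konary12,konary15} to upgrade to global existence, with strong continuity coming from Henry's construction together with \cite[Proposition~3]{konary15}. The paper's proof is slightly terser but makes exactly the same moves.
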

\begin{proof}
Since $A$ is sectorial and $F_\delta$ is (better than) locally Lipschitz continuous, we deduce by \cite[Theorem 3.3.3]{henry} local-in-time existence and the representation formula (\ref{eq-convar}). This argument does not depend upon the sign of $\delta$. Moreover, we obtain explicitly the smoothing effect, that is, if $z_0\in Z^\alpha_k$, then for any $t>0$ we have $z(t)\in D(A)$.

This statement for $u_0\in Z_0$ improves the local-in-time existence of weak solutions obtained in \cite{kory12} for data in $\pHa2$. The global-in-time existence of weak solutions was established in \cite{konary15}, again the sign of $\delta$ is unimportant. 
Strictly speaking, global existence was obtained for initial data in $Z_2$, however, it holds for any data $u_0\in Z_0$, because of the regularising effect mentioned in the previous paragraph.

Finally, the construction of \cite[Theorem 3.3.3]{henry} provides us with a strongly continuous semigroup $S_\delta(t) u_0 := u(t)$ in $Z_0$ or $Z_2$. The fact that $S_\delta$ is  strongly continuous in $\pHa2$ was showed in \cite[Proposition 3]{konary15}.
\end{proof}

\section{Structural analysis}\label{s-sa}

We wish to use \cite[Theorem 2.4]{hale-raugel} to deduce convergence of
solutions to \eqref{eqn:hcch}. For this purpose we have to check that the
hypotheses of this theorem are satisfied. The crucial one, requiring that
\eqref{eqn:hcch} is a gradient-like system, will be established in the next section by invoking a theorem of Carvalho-Langa-Robinson \cite{robinson}.
Here, we deal with all of the others. 

\begin{remark} We know that for $\delta=0$ the problem
\eqref{eqn:hcch} is already a gradient flow. As a consequence of the work in this
section we will be able to apply \cite[Theorem 2.4]{hale-raugel} to conclude the
new result that solutions to \eqref{eqn:hcch} with $\delta=0$ converge to a
steady state.
\end{remark}

To begin with, we recall the result we mentioned above.
\begin{theorem}\label{TH-R}{\rm \cite[Theorem 2.4]{hale-raugel}}
Let us the consider the problem
\begin{equation}\label{dHR}
    \dot z = C_1z + H_1(z),\qquad z(0) = z_1,
\end{equation}
where
\begin{quote}
    (H.1) $-C_1: D(C_1) \subset Z \to Z$ is sectorial and there is an $\alpha>0$ such that $H_1: Z^\alpha \to Z$ is of class $C^1$ and $D(C_1) = D(C_1 + DH(z_1))$. 
\end{quote}
We assume that  for  any $z_1 \in Z^\alpha$ its positive orbit of (\ref{dHR}) is precompact and the $\omega$-limit set $\omega(z_1)$ consists of equilibria of  (\ref{dHR}). Suppose that for any $z_0\in \omega(z_1)$, the semigroup $T(t)= T_{z_0}(t)$ generated by the linear operator $C = C_1 + DH_1(z_0)$ satisfies the hypotheses (H2), (H3) and (H4) below: 
\begin{quote}
{(H.2)} There is a decomposition $Z = X \oplus Y_1 \oplus Y_2$ with associated continuous projection operators $P_0$, $P_1$, $P_2,$ each of which commute with $T(t)$;

(H.3) The ranges $X$ and $Y_2$ of $P_0$ and $P_2$ are finite dimensional: dim\, $X= m_0$, dim \,$Y_2 = m_2$;

(H.4) The spectrum $\sigma(T(1))$ can be written as $\sigma(T(1))=\sigma_- \cup \sigma_0 \cup \sigma_+$, where  $ \sigma_- = \sigma(T(1))P_1$, $\sigma_0 = \sigma(T(1))P_0$, $\sigma_+ = \sigma(T(1))P_2$ lie inside, on, and outside the unit circle centered at $0\in\mathbb{C}$ respectively. Moreover the distance of $\sigma_-$ to this unit circle is positive.
\end{quote}
In addition, $\sigma_0 $ is either empty or it contains the only point 1, which is a simple eigenvalue of $T(1)$. Then, there is a unique point $\varphi = \varphi_{z_0}$ such that $\omega(z_1) = \{\varphi\}.$
\end{theorem}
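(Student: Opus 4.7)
The plan is to reduce to a local analysis near each $z_0\in\omega(z_1)$ using invariant manifold theory for the linearised semigroup $T(t)$, and then to exploit the very restrictive form of $\sigma_0$ to force the $\omega$-limit set to collapse to a point. To begin, I would record the usual topological facts: the precompactness of the positive orbit $\{S(t)z_1\}_{t\ge0}$ implies that $\omega(z_1)$ is non-empty, compact, invariant, and connected. By hypothesis it consists entirely of equilibria of \eqref{dHR}. Fix $z_0\in\omega(z_1)$. Since $-C_1$ is sectorial and $H_1\in C^1$, the linearisation $C=C_1+DH_1(z_0)$ generates an analytic semigroup, and the spectral decomposition $Z=X\oplus Y_1\oplus Y_2$ provided by (H.2)--(H.4) allows one to invoke the standard local invariant manifold theorem (e.g. Henry \cite{henry} or Bates--Jones) to obtain local stable, centre, and unstable manifolds $W^s_{\mathrm{loc}}(z_0)$, $W^c_{\mathrm{loc}}(z_0)$, $W^u_{\mathrm{loc}}(z_0)$, tangent at $z_0$ to $Y_1$, $X$, $Y_2$ respectively. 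Note that because $\sigma_-$ is bounded away from the unit circle, the decay on $W^s_{\mathrm{loc}}(z_0)$ is exponential.

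The key step is a case split on $\sigma_0$. If $\sigma_0=\emptyset$, then $z_0$ is hyperbolic, so $X=\{0\}$ and $z_0$ is an isolated equilibrium in $Z^\alpha$; combined with the connectedness of $\omega(z_1)$ and the fact that $\omega(z_1)$ is contained in the equilibrium set, this yields $\omega(z_1)=\{z_0\}$ and we are done. The substantive case is $\sigma_0=\{1\}$ simple, so $\dim X=1$ and the centre manifold $W^c_{\mathrm{loc}}(z_0)$ is a one-dimensional $C^1$ curve through $z_0$. I would then show that $W^c_{\mathrm{loc}}(z_0)$ \emph{consists entirely of equilibria}: the equation $C_1 v + H_1(v)=0$ together with a Lyapunov--Schmidt reduction, using that the kernel of $C$ is exactly one-dimensional and that the range is closed with codimension one, produces a smooth one-parameter family $\{\varphi_s\}_{|s|<s_0}$ of equilibria tangent at $z_0$ to $X$. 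By uniqueness of the (locally) tangent $C^1$ invariant curve, this family coincides with $W^c_{\mathrm{loc}}(z_0)$. Consequently the flow on $W^c_{\mathrm{loc}}(z_0)$ is identically trivial.

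With this structural picture in hand, I would finish via a centre--stable manifold argument. The orbit $S(t)z_1$ eventually enters and remains in an arbitrarily small neighbourhood of $\omega(z_1)$; near $z_0$, standard saddle-point / centre--stable manifold results (using absence of an unstable exit direction once the orbit has trapped itself, because any visit to $W^u_{\mathrm{loc}}$ would push the orbit away from $\omega(z_1)$, contradicting the fact that it accumulates at $z_0$) force the orbit to lie in the local centre--stable manifold $W^{cs}_{\mathrm{loc}}(z_0)$. On $W^{cs}_{\mathrm{loc}}(z_0)$ the stable fibres contract exponentially to $W^c_{\mathrm{loc}}(z_0)$, and since the latter is a curve of equilibria, the distance of $S(t)z_1$ to this curve is summable in $t$. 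A standard estimate on the arc-length traversed along the centre curve (which is zero, as the centre flow is trivial) then shows that $S(t)z_1$ is Cauchy and converges to a unique $\varphi=\varphi_{z_0}\in W^c_{\mathrm{loc}}(z_0)$. By connectedness of $\omega(z_1)$, $\omega(z_1)=\{\varphi\}$.

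The hard part, in my view, is the rigorous identification $W^c_{\mathrm{loc}}(z_0)=\{\varphi_s\}$, since it requires both the Lyapunov--Schmidt construction of the equilibrium curve (using simplicity of the eigenvalue $1$) and the uniqueness of the $C^1$ local centre manifold tangent to $X$; and the subsequent exit-time / no-recurrence argument that rules out the orbit oscillating between the centre--stable and centre--unstable sides. Both points are standard but delicate; they are precisely where the hypotheses (H.3) (finite-dimensionality of $X$ and $Y_2$) and the simplicity of the centre eigenvalue are essential.
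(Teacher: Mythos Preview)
The paper does not prove this theorem; it is quoted verbatim from Hale--Raugel \cite[Theorem~2.4]{hale-raugel} and used as a black box. The surrounding lemmas in Section~\ref{s-sa} only verify hypotheses (H.1)--(H.4) for the specific PDE \eqref{eqn:hcch}. So there is no ``paper's own proof'' to compare against.

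That said, your sketch contains a genuine gap that would matter if you tried to carry it out. The step ``Lyapunov--Schmidt produces a smooth one-parameter family $\{\varphi_s\}$ of equilibria tangent to $X$'' is not justified by the hypotheses of the theorem. Lyapunov--Schmidt reduces $C_1v+H_1(v)=0$ to a scalar equation $g(s)=0$ with $g(0)=0$, but nothing forces $g\equiv 0$ near $0$; the equilibrium $z_0$ may well be isolated even when $\dim X=1$. In the paper's concrete application a curve of equilibria \emph{does} exist (the shift orbit $\{\tau_s u^*\}$), but the abstract Hale--Raugel statement makes no such assumption, and your argument as written would fail for an isolated equilibrium with a one-dimensional centre direction.

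The actual Hale--Raugel mechanism does not need the centre manifold to consist of equilibria. Once the orbit is trapped on the local centre--stable manifold and the $Y_1$-component decays exponentially, the dynamics are governed by the reduced flow on the one-dimensional centre manifold, which is a scalar ODE. Scalar flows are monotone between equilibria; combined with the hypothesis that $\omega(z_1)$ consists of equilibria, this monotonicity forces convergence of the $x$-component. Your exit-time/no-recurrence intuition for ruling out the unstable direction is on the right track, but the convergence along the centre comes from one-dimensional monotonicity, not from the centre manifold being flat.
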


Initially, we check that the hypotheses of Theorem \ref{TH-R} hold for \eqref{eqn:hcch} written as \eqref{dHR} in the cases $Z = Z_0 = L^2$ and
{and $Z=Z_2\equiv \pHa{2}$. 
The space $Z_0$ seem more natural, however, some results will require us to take $Z=Z_2$. 

We note that in both cases of $Z$} 
the norm is smooth, $C_1 = -A$, with $D(C_1) = \pHa6$, $H_1(z) = F_\delta(z)$, where $A$ is defined in \eqref{rn:da} and $F_\de$ is given by \eqref{rn:nl}. One reason to keep in mind $Z= Z_2 = \pHa2$ is that the results of \cite{konary15} are stated using the $Z_2$-topology.

\begin{lemma}[Hypothesis (H.1)] 
The operator $A$ is sectorial on $Z_k$,  and $F_\de$ is a $C^1$ map from  $Z_k^{{5/6}}$ 
to $Z_k$ {for $k=0, 2$}. 
\end{lemma}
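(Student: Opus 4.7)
The plan is to observe that this lemma is essentially a summary of results already established in Section \ref{s-bare}, and to present it as a straightforward combination of those results. No new argumentation is really needed; the work is in invoking the right previous statements at both values of $k$.

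For the sectoriality claim, I would simply cite Corollary \ref{lem30}, which was established by a direct Fourier-series argument on the flat torus: the operator $A = -\Delta^3$ is self-adjoint with non-negative, discrete spectrum, hence trivially sectorial both on $Z_0 = \Ltwod$ and on $Z_2 = \pHa{2}$, with the same domain description $D(A) = \pHa{6+k}$ in each case.

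For the $C^1$ claim, I would invoke Lemma \ref{lem33} at the endpoint $\alpha = 5/6$. The identification $Z_k^{\alpha} = \pHa{6\alpha+k}$ gives $Z_0^{5/6} = \pHa{5}$ and $Z_2^{5/6} = \pHa{7}$, both of which embed continuously into the space $\pHa{5+k}$ required by the proof of Lemma \ref{lem33}. The two ingredients of $F_\de$, namely the transport-type term $\delta u u_x$ and the higher-order term $-\Delta^2 W'(u) = -\Delta^2(u^3 - u)$, are polynomials in $u$ composed with at most four spatial derivatives, so Lemmata \ref{lem221}, \ref{lem222} and especially \ref{lem223} deliver continuous Fr\'echet differentiability with values in $Z_k$.

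Since the statement is essentially a restatement of earlier results, I do not anticipate any obstacle. The only minor point worth emphasising is that Lemma \ref{lem33} as stated is phrased uniformly in $k \in \{0,2\}$ via the embedding $\pHa{5+k} \hookrightarrow Z_k$, so the same proof covers both target spaces simultaneously and no case analysis is needed.
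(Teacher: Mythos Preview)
Your proposal is correct and follows essentially the same approach as the paper: both simply cite the results already established in Section~\ref{s-bare} (Corollary~\ref{lem30} for sectoriality, and Lemma~\ref{lem33}---which rests on Lemmata~\ref{lem221}--\ref{lem223}---for the $C^1$ property of $F_\delta$). Your write-up is in fact slightly more careful than the paper's, which cites Lemma~\ref{lem223} directly rather than Lemma~\ref{lem33}; note also that for $\alpha=5/6$ the identification $Z_k^{5/6}=\pHa{5+k}$ is an equality rather than merely an embedding.
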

\begin{proof}
We have already noted in Section \ref{s-bare} that $A$ is a sectorial operator.
In Lemma \ref{lem223} we showed that $F_\de$ is indeed a $C^1$ map defined on $Z_k^{{5/6}}$ 
with values in $Z_k,$  {for $k=0, 2$}. 
This establishes the lemma.
\end{proof}

\begin{lemma}[Hypotheses (H.2) and (H.3)]
There is a decomposition $Z_{k} =X \oplus  Y_1 \oplus Y_2$, {$k=0, 2$,} with associated continuous projection operators $P_0$, $P_1$, $P_2$  that commute with the semigroup generated by the linearisation of \eqref{rn:ds}.
The ranges $X$ and $Y_2$ of $P_0$ and $P_2$ are finite-dimensional, with dimension $m_0$ and $m_2$ respectively.
\end{lemma}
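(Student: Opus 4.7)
The plan is to obtain the decomposition by spectral projection onto the parts of the spectrum of the generator $C = -A + DF_\delta(z_0) = \cL_{z_0}^\delta$ lying in $\{\text{Re}\,\lambda < 0\}$, on the imaginary axis, and in $\{\text{Re}\,\lambda > 0\}$ respectively. First I would note that $C$ is sectorial on $Z_k$ because it is a relatively bounded perturbation of the sectorial operator $-A$ by the lower-order map $DF_\delta(z_0)$, which factors through $Z_k^{5/6}$ by Lemma \ref{lem33}. Since the inclusion $\pHa{6+k} \hookrightarrow Z_k$ is compact on the torus, $(-A)^{-1}$ is compact and hence $C$ itself has compact resolvent, so its spectrum consists entirely of isolated eigenvalues of finite algebraic multiplicity.

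Next I would locate the spectrum. For $\delta = 0$, Proposition \ref{p2.18} shows that $\cL_0$ is spectrally equivalent to the self-adjoint operator $\cM_0$, whose spectrum is a real discrete set accumulating only at $-\infty$; in particular only finitely many eigenvalues exceed $-c$ for any $c>0$. For $0 < |\delta| < \delta_1$ I would invoke Kato's perturbation theory \cite[Ch.~IV, Thm.~3.16]{kato} exactly as in Lemma \ref{co-dim0}: the difference $\cL_{z_0}^\delta - \cL_{z_0}^0$ is relatively bounded with constants of order $\delta$, which yields resolvent convergence and stability of any finite system of eigenvalues of the unperturbed operator. Consequently $\sigma_+ := \sigma(C) \cap \{\text{Re}\,\lambda > 0\}$ is finite and bounded away from the imaginary axis, and $\sigma_0 := \sigma(C) \cap i\bR$ is contained in the perturbation of the zero eigenvalue of $\cL_{z_0}^0$. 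By Lemma \ref{co-dim0} this eigenvalue stays simple when $z_0\in\cE^{\delta,i}$ with $i\ge1$, while it disappears when $z_0=0$; so $\sigma_0$ is either $\{0\}$ with algebraic multiplicity one or empty.

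Having separated the three spectral components I would define the Riesz projections
\[
P_j = \frac{1}{2\pi i}\oint_{\Gamma_j} (\lambda - C)^{-1}\,d\lambda, \qquad j = 0, 1, 2,
\]
where $\Gamma_0$ and $\Gamma_2$ are small positively oriented contours enclosing $\sigma_0$ and $\sigma_+$ respectively, while $\Gamma_1$ is a sectorial contour enclosing the remaining (infinite) part $\sigma_-$ lying to the left of the imaginary axis. Standard Dunford-Riesz functional calculus for sectorial operators (see \cite[\S 1.5]{henry}) yields $P_0 + P_1 + P_2 = I$, mutual commutativity, and commutation with $e^{Ct}=T(t)$ for all $t\ge 0$; this produces the desired decomposition $Z_k = X \oplus Y_1 \oplus Y_2$ with $X = P_0 Z_k$, $Y_j = P_j Z_k$. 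The finiteness of $\sigma_0$ and $\sigma_+$, together with finite algebraic multiplicity of each eigenvalue, gives $\dim X = m_0 \in \{0,1\}$ and $\dim Y_2 = m_2 < \infty$, as required by (H.3).

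The main obstacle is ruling out the possibility that, as $\delta$ is turned on, non-real eigenvalues of $C$ drift onto or accumulate near the imaginary axis. The key point is that the unperturbed spectrum is purely real by the spectral equivalence in Proposition \ref{p2.18}(b), and the relative perturbation bound \eqref{co-2} is linear in $\delta$, so by choosing $\delta$ small (possibly shrinking $\delta_1$ once more) only a predetermined finite cluster of eigenvalues may migrate into any prescribed neighbourhood of the imaginary axis. Once this quantitative continuity is in hand, the construction of the projections and the verification of their finite-dimensional ranges reduces to the classical spectral theory for operators with compact resolvent.
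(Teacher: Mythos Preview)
Your proposal is correct and follows essentially the same Riesz-projection approach as the paper. Two small simplifications the paper makes that you could adopt: it sets $P_1 = Id - P_0 - P_2$ rather than integrating the resolvent over an unbounded sectorial contour, and it takes $\Gamma_0$ to enclose only the eigenvalue zero (so your careful discussion of non-real eigenvalues drifting onto $i\bR$ is not needed for (H.2)--(H.3) themselves---that concern belongs to, and is addressed in, the verification of (H.4)).
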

\begin{proof}
Let us use $\cL^\de$ to denote the linearisation of \eqref{rn:ds} at a steady state $u_0$.
This operator is given by 
\begin{equation}\label{def-cL}
\cL^\de= -A + DF_\de(u_0)\,.
\end{equation} 
Since $-\cL^\de$ is a compact perturbation of a sectorial
operator, it is sectorial as well.

We will call by $T(t)$ the semigroup generated by $\cL^\de$, that is, $T(t) = e^{\cL^\de t}$. We wish to show that the underlying Hilbert space $Z_{k}$
decomposes as
\begin{equation}\label{co-dec}
Z_{k} = X \oplus Y_1 \oplus Y_2, \qquad {k=0, 2,}
\end{equation}
where the corresponding continuous projection operators $P_1,$ $P_0$, $P_2$ commute with $T(1)$. 

The operator $\cL^\delta: D(A) \subset Z_{k}\to Z_{k}$ is bounded above, has a
discrete spectrum, and so only has a finite number of eigenvalues with positive
real part. Let $\Gamma_2$ denote a contour in $\bC$ contained in the upper half
plane $\{z:\Re z> 0\}$.
Let $\Gamma_0$ be a contour containing only the eigenvalue zero. Then, we
define $P_2$ (respectively $P_0$) as the integral of the
resolvent $R(\cL^\de, \xi)$ over $\Gamma_2$ (respectively $\Gamma_0$), see
\cite[Ch. 1, formula (5.22)]{kato}. We set $P_1 = Id - P_0 -P_2$. It is easy to
check that $P_1$ is a projection. Moreover, these three projections commute
with $T(1)$ because $T(t)$ is also defined through a contour integration.

Since each eigenvalue of $\cL^\de$ has a finite multiplicity and $\Gamma_2$ contains only a finite number of them we conclude that $P_2$ has finite dimensional range $Y_2$ and set $\dim Y_2=m_2$.
The range of $P_0$ is the kernel of $\cL^\de$.
We showed in Lemma \ref{co-dim0} that the kernel is one-dimensional or trivial, so $m_0 = \dim X$ is zero or one.
\end{proof}

\begin{lemma}[Hypothesis (H.4)]
The spectrum $\sigma(T(1))$ of $T(1)$ can be written as
$$
\sigma(T(1)) = \sigma(T(1)P_1) \cup \sigma(T(1)P_0) \cup \sigma(T(1)P_2) =
\sigma_- \cup\sigma_0\cup\sigma_+,
$$
where $\sigma_-$ is inside the unit circle with center at 0 in $\bC$, $\sigma_0$
is on it, and $\sigma_+$ is outside it.
Moreover, the distance of $\sigma_-$ to the unit circle is positive, and $\sigma_0 $ is either empty or it contains the only point 1, which is a simple eigenvalue of $T(1)$.
\end{lemma}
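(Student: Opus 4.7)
My plan is to reduce the claim about $T(1)=e^{\cL^\de}$ to spectral properties of the generator $\cL^\de$ via the spectral mapping theorem for analytic semigroups. Recall $\cL^\de=-A+DF_\de(u_0)$, where $A$ is sectorial with compact resolvent on $Z_k$ and $DF_\de(u_0)$ is a lower-order (in particular $A$-compact) perturbation. Hence $\cL^\de$ is itself sectorial with compact resolvent, so $\sigma(\cL^\de)$ consists of isolated eigenvalues of finite algebraic multiplicity, accumulating only at infinity, and contained in some left sector $\{\lambda:\Re\lambda\le\omega\}$ for some $\omega\in\bR$. Because $\cL^\de$ generates an analytic semigroup, the spectral mapping theorem applies: $\sigma(T(1))\setminus\{0\}=\{e^\lambda:\lambda\in\sigma(\cL^\de)\}$.

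With this in hand I would partition $\sigma(\cL^\de)$ into three disjoint pieces $\Sigma_-=\{\Re\lambda<0\}$, $\Sigma_0=\{\Re\lambda=0\}$, $\Sigma_+=\{\Re\lambda>0\}$, and let $\sigma_-,\sigma_0,\sigma_+$ be their images under exponentiation; clearly these lie respectively inside, on, and outside the unit circle in $\bC$. The projections $P_0,P_2,P_1=I-P_0-P_2$ constructed in the previous lemma commute with $T(1)$ (they are spectral projections for $\cL^\de$), and the decomposition $\sigma(T(1))=\sigma(T(1)P_1)\cup\sigma(T(1)P_0)\cup\sigma(T(1)P_2)$ follows by standard spectral theory on each invariant subspace.

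For the claim about $\sigma_0$, I would use that $\sigma(\cL^\de)\cap i\bR$ consists of finitely many eigenvalues (since $\sigma(\cL^\de)$ can only accumulate at infinity and is contained in a left sector). Lemma \ref{co-dim0} establishes that $0$ is a simple eigenvalue of $\cL_i^\de$ at any member of $\cE^{\de,i}$, $i=1,\dots,n$, while the kernel is trivial at $u_0=0$. What remains is to exclude nonzero purely imaginary eigenvalues. Here I would invoke the spectral equivalence between $\cL^\de$ and the self-adjoint operator $\cM$ from Proposition \ref{p2.18} (which forces the spectrum of $\cL^0$ to be real), together with the Kato perturbation estimate \eqref{co-2} and \cite[Ch.\ IV, Theorem 3.16]{kato}, to conclude that for $|\de|<\de_1$ no new purely imaginary eigenvalue of $\cL^\de$ can appear on $i\bR\setminus\{0\}$: any such eigenvalue would have to arise as a continuous perturbation of a real eigenvalue of $\cL^0$, which is impossible since the nonzero real eigenvalues of $\cL^0$ are bounded away from $0$.

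The main obstacle I expect is the quantitative gap statement, namely $\dist(\sigma_-,\{|z|=1\})>0$. In principle eigenvalues of $\cL^\de$ with $\Re\lambda<0$ could accumulate at the imaginary axis; I would rule this out as follows. Since $\cL^\de$ has compact resolvent and is sectorial with spectrum in a left sector, only finitely many eigenvalues lie in any strip $\{-c\le\Re\lambda\le\omega\}$. Therefore the set $\Sigma_0\cup\Sigma_+$ is finite and isolated from the remaining eigenvalues, i.e.\ there exists $c>0$ such that every $\lambda\in\Sigma_-$ satisfies $\Re\lambda\le-c$. Passing to $T(1)$ gives $|e^\lambda|\le e^{-c}<1$, which is the required positive gap. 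Once this is in place, the lemma follows by collecting the three spectral pieces.
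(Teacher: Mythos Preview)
Your approach is essentially the same as the paper's: both pass from $\sigma(\cL^\de)$ to $\sigma(T(1))$ via the spectral mapping for analytic semigroups (functional calculus), partition according to the sign of $\Re\lambda$, and obtain the positive gap for $\sigma_-$ from the fact that the eigenvalues of $T(1)$ can accumulate only at $0$ (equivalently, only finitely many eigenvalues of $\cL^\de$ lie in any vertical strip, by sectoriality plus compact resolvent).

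The one place where you go beyond the paper is in explicitly excluding nonzero purely imaginary eigenvalues of $\cL^\de$. The paper's short proof takes this for granted: its $P_0$ is defined (in the preceding lemma) as the Riesz projection for the single eigenvalue $0$, and it then simply asserts that $\sigma(T(1)P_1)$ lies strictly inside the unit disc. That assertion is only correct once one knows $\sigma(\cL^\de)\cap i\bR\subset\{0\}$, which the paper leaves implicit via Proposition~\ref{p2.18} (reality of $\sigma(\cL^0)$) and the Kato perturbation used in Lemma~\ref{co-dim0}. Your argument for this point is on the right track; to make it watertight you should combine the spectral gap of $\cL^0$ at $0$ (handling eigenvalues near the origin) with sectoriality (confining eigenvalues with large $|\Im\lambda|$ away from $i\bR$), rather than relying only on ``nonzero real eigenvalues of $\cL^0$ are bounded away from $0$''.
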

\begin{proof}
Note that, as $T(1) = e^{\cL^\delta}$, if $\lambda$ is an eigenvalue for $\cL^\delta$ then $e^\lambda$ is an eigenvalue for $T(1)$.
Now, since $Y_2$ is a direct sum of eigenspaces corresponding to eigenvalues of $\cL^\delta$ with
positive real part, $Y_2$ is invariant for $T(1)$ defined by contour
integration. Then by the functional calculus the absolute values of all eigenvalues in the spectrum of $T(1)P_2$ are greater than one. By the same token the spectrum of $T(1)P_1$ is inside the unit disc. It is  at a positive distance from the unit circle, because the only accumulation point of the eigenvalues is zero.

Since zero, if it is an eigenvalue at all, is a simple, isolated eigenvalue of
$\cL^\de$, $X$ is invariant for $T(1)$ and 1 is the eigenvalue of $T(1)$ there.
\end{proof}

{After fixing a steady state $u^\delta_0$, we have the corresponding linearisation $\cL^\delta.$}
Then, we introduce 
$$
\cC = P_0 {\cL^\delta},\qquad\cB=(Id-P_0) {\cL^\delta},
\qquad\cB_1= P_1  {\cL^\delta},\qquad\cB_2= P_2  {\cL^\delta}
$$
Hale-Raugel \cite[\S 2.1]{hale-raugel} gives us that the equation \eqref{eqn:hcch} can be written in the following coordinate system:
\begin{eqnarray}\label{co-r}
 \dot x &=& \cC x +f(x,y_1,y_2),\nonumber\\
 \dot y_1 &=& \cB_1 y_1 + g_1(x,y_1,y_2),\\
 \dot y_2 &=& \cB_2 y_2 + g_2(x,y_1,y_2),\nonumber
\end{eqnarray}
where $f= P_0(F_\delta - {F_\delta(u^\delta_0) - D F_\delta(u^\delta_0)})$, $g_1= P_1(F_\delta - {F_\delta(u^\delta_0) -D F_\delta(u^\delta_0)})$, and $g_2= P_2(F_\delta - {F_\delta(u^\delta_0) -D F_\delta(u^\delta_0)})$.
We can draw a conclusion about (\ref{eqn:hcch}) which is of independent interest, when $\delta =0$. Here it is:
\begin{corollary}\label{co-4.6}
Consider (\ref{eqn:hcch}), when $\delta=0$ and $u_0\in Z_k$, {$k =0, 2$}.
Then $\omega(u_0)=\{\phi\}$, where $\phi$ is a steady state of (\ref{eqn:hcch}).
In other words, all solutions converge to a steady state as $t\to \infty$.
\end{corollary}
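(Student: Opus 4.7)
The plan is to apply Theorem \ref{TH-R} with $\delta=0$. All four structural hypotheses (H.1)--(H.4) were checked in the preceding lemmas for arbitrary $\delta$ and therefore hold in the present case. The only remaining ingredients are: (i) precompactness of the positive orbit through $u_0$, and (ii) the verification that $\omega(u_0)$ consists entirely of equilibria of \eqref{rnade0}. Both will follow from the gradient flow structure of \eqref{rnade0} in the $H$-topology, combined with the smoothing properties of the semigroup $S_0(t)$ recorded in Proposition \ref{pr34}.

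To establish (i), first take $u_0 \in Z_2$. Then precompactness of the orbit in $Z_2$ is a direct consequence of the existence of the compact global attractor in $H^2$ from \cite{konary15}. For $u_0 \in Z_0$, we would argue via the parabolic smoothing in Proposition \ref{pr34}: for any fixed $t_0>0$ one has $u(t_0)\in D(A)\subset Z_2$, so $\{u(t):t\ge t_0\}$ is precompact in $Z_0^\alpha$ with $\alpha\in[5/6,1)$ by the $Z_2$-case applied to the shifted initial datum; adjoining the continuous image $\{u(t):t\in[0,t_0]\}$ yields precompactness of the entire orbit.

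For (ii), I would exploit that \eqref{rnade0} is the $H$-gradient flow for the analytic functional $\cF$ from \eqref{eqncF}. Differentiating along any solution gives the Lyapunov identity
\[
\frac{d}{dt}\cF(u(t)) \;=\; -\vn{\dot u(t)}_{H}^{2}\;\le\;0,
\]
and since $\cF$ is bounded below, $\cF(u(t))$ decreases to some limit $\cF_{\infty}$. By continuity of $\cF$ on the precompact orbit, $\cF\equiv \cF_\infty$ on $\omega(u_0)$. Pick any $\phi\in\omega(u_0)$; by the usual invariance of $\omega$-limit sets under $S_0(t)$, the trajectory $S_0(t)\phi$ remains in $\omega(u_0)$, so $\cF(S_0(t)\phi)\equiv \cF_\infty$, which combined with the Lyapunov identity forces $\dot\phi\equiv 0$ in $H$. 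Hence $\phi$ is an equilibrium of \eqref{rnade0}, as required.

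With (i) and (ii) in hand, Theorem \ref{TH-R} applies and yields $\omega(u_0)=\{\phi\}$ for a single equilibrium $\phi$. The main substantive step is (ii)---the gradient-flow argument in the correct topology. The more delicate point hiding inside (H.4), namely that the shift-generated continuum $\cE^{0,i}$ does not trap the trajectory in a whole circle of equilibria, is precisely what is ruled out by the one-dimensional kernel result Theorem \ref{t-1dke} together with the triviality of the kernel at $v=0$ under $L\in[L_0,\infty)\setminus E$ from Corollary \ref{cor-2.10}; these are the price we pay for the exclusion of the exceptional period set $E$.
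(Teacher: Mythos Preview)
Your proposal is correct and follows essentially the same route as the paper: invoke Theorem~\ref{TH-R}, note that (H.1)--(H.4) are already verified, use the smoothing of $S_0(t)$ to upgrade rough data, and appeal to the Lyapunov functional $\cF$ to conclude that $\omega(u_0)$ consists of equilibria. You supply more detail than the paper does---in particular you explicitly argue precompactness of the orbit via the global attractor from \cite{konary15}, and you spell out the standard Lyapunov/invariance argument for (ii)---but these are natural elaborations of the paper's terse proof, not a different strategy.
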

\begin{proof}
Our claim \GW{follows} from Theorem \ref{TH-R}. Indeed, assumptions (H.1)--(H.4) have been already checked. Theorem \ref{TH-R} stipulates that $u_0\in Z^\alpha$, but the flow of (\ref{eqn:hcch}) is regularising, so even if $u_0\in Z,$ then for any $t>0$ we have $u(t_0)\in Z^\alpha$. Moreover, $\omega(u_0) = \omega(u(t_0)).$ 

We have to check that $\omega(u(t_0))$ consists of the steady states. However, this follows immediately from the fact that $\cF$ defined in (\ref{eqncF})
is a Lyapunov functional.
\end{proof}

\begin{remark}
We note that a result corresponding to Corollary \ref{co-4.6} can also be deduced in two dimensions. We cannot apply the Hale-Raugel theory, because the kernel of the linearisation at a steady state is at least two dimensional. We may, however, use the ideas of \L ojasiewicz, see \cite[\S 4]{13}, \cite[\S IV.9]{14}. This program was completed for the classical Cahn-Hilliard equation in \cite{hory}. Here, an adjustment of the argument in \cite{hory} is required.
\end{remark}

\section{A convergence result}\label{s-co}

Here, we prove our main result, Theorem \ref{th-main}.
This  will confirm the numerical observations that were made in \cite{konary15}.  We will
prove this in a number of steps delineated below. Our analysis is for $L\in (0,\infty)\setminus E$, where the exceptional set $E$ is as in Theorem \ref{t-count}, in particular $E$ has zero Lebesgue measure. We have no tool to address $L\in E$:
{f}or $2k\pi \in E$, $k\in \bN$, $k>0$, we may say that the argument presented here is not applicable, because the linearised operator $\cL_u$ at $u =0$ has a two-dimensional kernel.

The case $L\in(0,\GW{L}_0)$ is also special, because the zero solution is the only steady  state.

We use the stability of gradient flows under small perturbations by
\cite{robinson}. Although the notion of gradient flow in \cite{robinson} is weak, it
is strong enough to allow us to apply the Hale-Raugel convergence result.
Below we check that \cite[Theorem 5.26]{robinson} applies.
We recall here the notions used in \cite{robinson}.
The first one is that of the global compact attractor.
\begin{definition}{\rm (\cite[Definition 1.5]{robinson})}\\
A set $\cA \subset Z$ is the global attractor for a semigroup $S (\cdot):Z \to Z$ if\\
(i) $\cA$ is compact;\\
(ii) $\cA$ is invariant; \\
(iii) $\cA$ attracts each bounded subset of $Z$.
\end{definition}
We also have to explain what we mean by a gradient flow here.

\begin{definition} \label{defRob}{\cite[Definition 5.3, Definition 5.4, Theorem 5.5]{robinson}}
We say that a semigroup $S$ with a global attractor $\cA$ is a gradient flow with respect to the family $\cS=\{\cE^{0},\ldots,\cE^{k}\}$ of invariant sets provided  that:\\
1) For any global (eternal) solution $\xi:\bR \to Z$ taking values in $\cA$, there exist $i,j\in\{0,\ldots, k\}$ such that
$$
\lim_{t\to -\infty} \dist(\xi(t), \cE^{i}) =0\quad\hbox{and}\quad
\lim_{t\to \infty} \dist(\xi(t), \cE^{j}) =0.
$$
2) The collection $\cS$ contains no homoclinic structures.
\end{definition}
We will see at the end of this section that in the case of problem (\ref{eqn:hcch}) this notion 
is stronger than that used in \cite{hale-raugel}. We recall that Hale-Raugel require only that the omega-limit set consists only of steady states. Thus, provided we can prove that \eqref{eqn:hcch} is a gradient flow in the sense of Definition \ref{defRob}, the results of \cite{hale-raugel} are applicable to \eqref{eqn:hcch}.

\subsection{Stability of the gradient flow \texorpdfstring{\eqref{eqn:hcch}}{(1.4)} at \texorpdfstring{$\delta=0$}{delta equals zero}}\label{s5.1}

Let us recall:

\begin{theorem}\label{thmRob} {\rm (\cite[Theorem 5.26]{robinson}: Stability of gradient semigroups).}
Let $S_0(\cdot)$ be a semigroup on a Banach space $Z$ that has a global attractor
$\cA_0$ and that is a gradient flow with respect to the finite collection $\cS^0$ of isolated invariant sets $\{\cE^{0,0} ,\cE^{0,1} ,\ldots, \cE^{0,k}\}$. Assume that:
\begin{enumerate}
\item[(a)] for each $\delta\in(0,1]$, $S_\delta(\cdot)$ is a semigroup on $Z$ with a
global attractor $\cA_\delta$;
\item[(b)] $\{S_\delta(\cdot)\}_{\delta\in[0,1]}$ is collectively asymptotically compact and
$\overline{\bigcup_{\delta\in[0,1]}\cA_\delta}$ is bounded;
\item[(c)] $S_\delta(\cdot)$ converges to $S_0(\cdot)$, in the sense that
\[
d(S_\delta(t)u, S_0(t)u) \rightarrow 0\qquad\text{ as }\qquad\delta\rightarrow0
\]
uniformly for $u$ in compact subsets of $Z$; and
\item[(d)] for $\delta\in(0,1]$, $\cA_\delta$ contains a finite collection of isolated invariant sets $\cS^\delta = \{\cE^{\delta,0} ,\cE^{\delta,1},\ldots,\cE^{\delta,k}\}$ such that
\[
\lim_{\delta\rightarrow0} \dist_H(\cE^{\delta,j},\cE^{0,j}) = 0
\]
and there exist $\eta > 0$ and $\delta_1\in(0,1)$ such that for all $\delta\in(0,\delta_1)$, if $\xi_\delta:\R\rightarrow \cA_\delta$ is a global (or eternal) solution, then
\[
\sup_{t\in\R} \dist( \xi_\delta(t), \cE^{\delta,j})\le\eta 
\quad
\Rightarrow
\quad
\xi_\delta(t)\in \cE^{j,\delta}\text{ for all $t\in\R$.}
\]
\end{enumerate}
Then, there exists a $\delta_{2} \in (0,\delta_1)$ such that, for all
$\delta\in(0,\delta_\PR{2})$, $S_\delta(\cdot)$ is a gradient
semigroup with respect to $\cS^\delta$. In particular
\[
\cA_\delta = \bigcup_{i=1}^k W^u(\cE^{\delta,i})\,.
\]
\end{theorem}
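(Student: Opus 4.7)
The plan is to verify the two conditions from Definition \ref{defRob} for the perturbed semigroups $S_\delta(\cdot)$: namely, that every eternal trajectory in $\cA_\delta$ has both $\alpha$- and $\omega$-limits confined to a single component of $\cS^\delta$, and that $\cS^\delta$ admits no homoclinic structures. Both properties will be obtained by a contradiction/limit argument, converting bad behaviour at small $\delta$ into bad behaviour at $\delta=0$, where it is forbidden by the hypothesis that $S_0$ is a gradient flow with respect to $\cS^0$. The ``in particular'' statement about $\cA_\delta$ then falls out of the gradient structure.

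The first step will be to show that for all sufficiently small $\delta$, no eternal solution $\xi_\delta:\bR \to \cA_\delta$ can have its $\omega$-limit set intersecting two distinct components $\cE^{\delta,i}$, $\cE^{\delta,j}$. Suppose for contradiction that there is a sequence $\delta_n \to 0$ and eternal solutions $\xi_n$ violating this. Using (b) to get precompactness of $\bigcup_n \{\xi_n(t)\}$ and (c) to pass to the limit on finite time intervals, a diagonal extraction yields an eternal solution $\xi_\infty : \bR \to \cA_0$ of $S_0$. At this point hypothesis (d) enters: any sojourn of $\xi_n$ of arbitrarily large length within an $\eta$-neighbourhood of a single $\cE^{\delta_n, j}$ produces, in the limit $n\to\infty$ and after suitable time-shifts, an eternal orbit trapped in the $\eta$-neighbourhood of $\cE^{0,j}$, which by (d) must coincide with a stationary trajectory inside $\cE^{0,j}$; this forces transitions of $\xi_n$ between distinct components to be uniformly bounded in number and duration. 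Then the gradient flow property at $\delta=0$ (which determines which transitions are possible for $\xi_\infty$) together with the convergence $\cE^{\delta_n,j}\to\cE^{0,j}$ from (d) yields the desired contradiction.

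The second step will rule out homoclinic structures for $\cS^\delta$ when $\delta$ is small. A homoclinic chain $\cE^{\delta_n,j_1}\to\cE^{\delta_n,j_2}\to\cdots\to\cE^{\delta_n,j_1}$ along a subsequence $\delta_n\to 0$ would, by concatenating the connecting eternal orbits and applying the same compactness/convergence machinery from (b)–(c), produce a homoclinic chain for $\cS^0$, contradicting that $S_0$ is a gradient flow in the sense of Definition \ref{defRob}. Combining the two steps, Definition \ref{defRob} is verified for $S_\delta$ with respect to $\cS^\delta$ once $\delta<\delta_2$ for some $\delta_2\in(0,\delta_1)$.

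Finally, the identity $\cA_\delta=\bigcup_{i=1}^k W^u(\cE^{\delta,i})$ follows from the gradient structure by a standard argument: any $z\in \cA_\delta$ lies on some eternal orbit in $\cA_\delta$ (by invariance and compactness), whose $\alpha$-limit is, by Step 1, contained in a single $\cE^{\delta,i}$, placing $z\in W^u(\cE^{\delta,i})$. The main obstacle is Step 1: the careful use of hypothesis (d) to convert ``bounded proximity'' into ``exact membership'' is precisely what makes this stability result robust, and — in our concrete application to \eqref{eqn:hcch} — this is where the Liouville-type rigidity Theorem \ref{TMgap} will play the central role, since (d) for the sixth-order convective Cahn-Hilliard flow is not at all automatic.
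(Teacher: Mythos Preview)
This theorem is not proved in the paper: it is quoted verbatim from \cite[Theorem 5.26]{robinson} (Carvalho--Langa--Robinson) and used as a black box. The paper's contribution in Section~\ref{s5.1} is to \emph{verify} the hypotheses (a)--(d) for the specific semigroups $S_\delta$ associated with \eqref{eqn:hcch}; see Subsections~\ref{s51}--\ref{subsec-d}. There is therefore no paper proof of Theorem~\ref{thmRob} to compare your proposal against.

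Your sketch is a plausible high-level outline of how such a stability result is proved (compactness from (b), convergence on compacta from (c), passage to a limiting eternal solution for $S_0$, and the rigidity hypothesis (d) to rule out long sojourns near equilibria), and your remark that (d) is the crux and that Theorem~\ref{TMgap} supplies it for the present problem is exactly the point the paper makes. But as written it is only a sketch: the actual argument in \cite{robinson} requires care in several places your outline glosses over---most notably, the reduction from ``$\omega$-limit meets two components'' to a limiting eternal solution with the same bad behaviour is not immediate, since time-shifts and diagonal extractions can easily lose the two-component structure; and ruling out homoclinic \emph{chains} (not just single homoclinic orbits) needs an induction on chain length together with a uniform version of Step~1. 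If you wish to reproduce a full proof rather than cite \cite{robinson}, these gaps would need to be filled.
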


We now verify each of the parts (a)--(d) of Theorem \ref{thmRob}. At this point, we are forced to choose $Z= Z_2$ and the family of semigroups $S_\delta (\cdot):Z_2\to Z_2$ defined in \cite{konary15}. It turns out that this topology is suitable to prove part (d).

We have to put together results from different sources, while taking care that we are consistently using the metric of the Theorem above.

\subsubsection{Part (a) of Theorem \ref{thmRob}}\label{s51}

In \cite{konary15}, the authors proved the existence of a global  compact attractor $\cA_\delta$ for each $\delta>0$ for the semigroup $S_\delta(\cdot)$ acting on $Z= Z_2 \equiv \dot{H^2}_{per}$. This is sufficient for our purposes. We remark that with a slight effort one could show that $\cA_\delta$ is also an attractor for the semigroup considered on $Z_0$. Since we do not use this result, we are omitting the details.

\subsubsection{Part (b) of Theorem \ref{thmRob}}\label{subsec-b}

Here, we have to check that the strongly continuous semigroups $S_\delta(\cdot)$ on $Z_2$  associated to (\ref{eqn:hcch}) are {\it collectively asymptotically compact}. 

\begin{proposition}\label{pr53}
The family of semigroups $S_\delta(\cdot)$ is collectively asymptotically compact, that is:
Let $\{t_n\}$ be a sequence such that $t_n\to\infty$ and let $\{x_n\}\subset Z_2$ as well as 
$S_{\de_n}(t_n)x_n$ be bounded.
Then, $S_{\de_n}(t_n)x_n$ contains a convergent subsequence.
\end{proposition}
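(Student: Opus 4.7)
The plan is to combine the parabolic smoothing of the analytic semigroup generated by $-A=\Delta^3$ with the uniform-in-$\delta$ a priori $Z_2$ bounds established in \cite{konary15}, so as to upgrade the sequence $\{S_{\delta_n}(t_n)x_n\}$ to boundedness in a space compactly embedded in $Z_2$. Only the final unit-time interval $[t_n-1,t_n]$ will matter, because that is where the smoothing effect buys us additional regularity.

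In detail, I would first pass to a subsequence so that $t_n\ge 1$ and write
\[
S_{\delta_n}(t_n)x_n = S_{\delta_n}(1)\bigl[S_{\delta_n}(t_n-1)x_n\bigr] =: S_{\delta_n}(1)z_n\,.
\]
Using the a priori estimates from \cite{konary15} (whose constants depend only on a bound for the initial $Z_2$ norm and on $\de\in[0,1]$ through bounded quantities), the sequence $\{z_n\}$ is bounded in $Z_2$ uniformly in $n$. Applying the variation-of-parameters formula from Proposition \ref{pr34} on $[0,1]$,
\[
S_{\delta_n}(s)z_n = e^{-As}z_n + \int_0^{s}e^{-A(s-\tau)}F_{\delta_n}(S_{\delta_n}(\tau)z_n)\,d\tau\,,
\]
I would combine the sectorial estimate $\|e^{-A\tau}\|_{Z_2\to Z_2^\alpha}\le C\tau^{-\alpha}$ (Corollary \ref{lem30}) with the $Z_2^\alpha$-local Lipschitz property of $F_{\delta_n}$ (Lemma \ref{lem33}, for $\alpha\in[5/6,1)$) and a Gronwall-type iteration to obtain $\|S_{\delta_n}(\tau)z_n\|_{Z_2^\alpha}\le C(\tau)$ for $\tau\in(0,1]$, with $C(\tau)$ independent of $n$ since $\delta_n\in[0,1]$ is bounded. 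Setting $\tau=1$ gives that $\{S_{\delta_n}(1)z_n\}$ is bounded in $Z_2^\alpha=\pHa{2+6\alpha}$, which embeds compactly into $Z_2=\pHa{2}$. Extracting a convergent subsequence finishes the proof.

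The main obstacle will be establishing all estimates uniformly in $\delta\in[0,1]$. The $\de$-dependence of the nonlinearity is explicit and mild (the extra term $\de uu_x$ is of lower order and linear in $\de$), so the smoothing iteration on $[0,1]$ is uniform in $\delta$ once the starting bound $\|z_n\|_{Z_2}\le M$ is in hand. The trickier piece is ensuring that $\{z_n\}$ itself is bounded in $Z_2$: this relies on the fact that the a priori $H^2$-bounds of \cite{konary15} are of the form $\|S_\delta(t)u_0\|_{Z_2}\le f(\|u_0\|_{Z_2})$ with $f$ independent of $t$ and of $\delta\in[0,1]$. Provided this uniformity is extracted from the cited work (which is straightforward from the energy identity for \eqref{eqn:hcch}), the remainder of the argument is standard analytic semigroup technology.
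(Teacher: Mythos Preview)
Your proposal is correct and follows essentially the same strategy as the paper: uniform-in-$\delta$ a priori $Z_2$ bounds (which the paper derives explicitly via the energy $E_1$ and a first application of the Duhamel formula, yielding \eqref{eq-unib}) followed by parabolic smoothing through the variation-of-constants formula to land in a space compactly embedded in $Z_2$. The only organisational difference is that you use the semigroup property to reduce to the last unit interval and smooth $z_n=S_{\delta_n}(t_n-1)x_n$ up to $Z_2^\alpha$, whereas the paper keeps the full interval $[0,t_n]$, splits $u_n(t_n)=e^{\Delta^3 t_n}x_n+\text{(Duhamel integral)}$, bounds the integral in $\pHa3$, and lets the linear part decay to zero; both routes lead to the same compactness conclusion.
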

\begin{proof}
We shall see that the assumption of the boundedness of  $S_{\de_n}(t_n)x_n$ is redundant.

In the course of the proof of \cite[Theorem 10]{konary15} it was shown that if $u_0\in Z_2$, then 
\begin{equation}\label{eq-bdat}
    \frac d{dt}E_1 + \epsilon E_1 \le C_6\,,
\end{equation}
where 
$$
E_1(u) = \int_\bT W(u)\,dx + \frac12 \| u_x\|^2 + 2C_1 \| (-\Delta)^{-1} u\|^2
$$
and $C_6 = C_6(L,E_1(0))$, $C_1 = \delta^2 C_0$, where $C_0$ is a universal constant, and $\epsilon$ is small.

The estimate (\ref{eq-bdat}) implies that for  $u_n(t)= S_{\de_n}(t)x_n$ we have
\begin{equation}\label{eq-S}
   \int_\bT W(u_n(t))\,dx + \frac12 \| (u_n)_x(t)\|^2 \le  E_1(u_n(t)) \le \frac{C_6}{\epsilon}\qquad\text{for all } t\ge 0\,.
\end{equation}
This in turn yields
\begin{equation}\label{eq-S+}
\| u_n (t)\|_{\pHa1}\le \frac{C_6}{\epsilon}\qquad\text{for all } t\ge 0\,.
\end{equation}
We stress that this estimate is uniform in $\delta\in (0,1)$ and in $n\in\bN$ due to the uniform boundedness of $x_n$ in $Z_2$.

We notice that if we set $v_n = \PR{\Delta^{-2}(}\frac\delta 2 (u^2_n)_x) - W'(u_n)$, then (\ref{eq-S+}) implies that
\begin{equation}\label{eq-S++}
 \|\nabla v_n(t)\|_{L^2} \le C_1 \qquad t\ge 0.   
\end{equation}
Now, we are going to use  repetitively the constant variation formula (\ref{eq-convar}).  
If we combine (\ref{eq-S++}) with (\ref{eq-convar}), then we see
\begin{equation}\label{eq-unib}
\| u_n(t) \|_{H^2} \le M e^{-\lambda t}\| x_n\|_{H^2}
+ \int_{0}^t M \frac{e^{-\lambda(t-s)}}{(t-s)^{\frac56}}\|v_n(s) \|_{H^1}\,ds \le C_2, \qquad t\ge 0,
\end{equation}
where $C_2$ is a universal constant. In other words, we deduced a uniform  boundedness of $S_{\delta_n}(t_n)x_n$ in $Z_2$.

We are interested in estimating the integral term in (\ref{eq-convar}) in $\pHa3$. Due to (\ref{eq-unib})
we see that
$$
\| u_n(t_n) - e^{\Delta^3 t_n}x_n\|_{H^3} \le 
\int_0^{t_n} M \frac{e^{-\lambda(t-s)}}{(t-s)^{\frac56}}\|v_n(s) \|_{H^2}\,ds \le C_3 .
$$
The estimate above shows that we can select a convergent in $Z_2$ subsequence of $u_n(t_n) - e^{\Delta^3 t_n}x_n$. In addition,  $e^{\Delta^3 t_n}x_n$ goes to zero in $Z_2$. Our claim follows.
\end{proof}

\begin{remark}\label{rem-unib}
We notice that (\ref{eq-unib}) shows that if the initial condition is in $Z_2$ and $u$ is the corresponding solution, then the norm $\| u(t)\|_{Z_2}$ is bounded in terms of $\|u_0\|_{Z_2}$.
\end{remark}

\subsubsection{Part (c) of Theorem \ref{thmRob}}\label{subsec-c}

We have to show that $S_\de(\cdot)$ converges to $S_0(\cdot)$, as $\de\to0$ in the sense of Theorem \ref{thmRob}. We recall that existence of the semigroups $S_\delta(\cdot)$, $\delta\in\R$ is stated in Proposition \ref{pr34}.
Indeed, we have 
\begin{lemma}\label{lpartc}
Let us suppose that $K\subset Z_2$ is compact.
Then for each $t \ge 0$
$$
\lim_{\delta\to 0}\dist_{Z_2}(S_\delta(t) u_0, S_0(t) u_0) =0
$$
uniformly for $u_0\in K$.
\end{lemma}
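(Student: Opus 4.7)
The plan is to apply the Duhamel formula of Proposition~\ref{pr34} to both $u^\delta(t):=S_\delta(t)u_0$ and $u^0(t):=S_0(t)u_0$, subtract, and estimate the difference $e(t):=u^\delta(t)-u^0(t)$ in $Z_2=\pHa{2}$ by a singular Gronwall argument. Since both solutions start from the same datum, Duhamel yields
\begin{equation*}
e(t) = \int_0^t e^{-A(t-s)}\Bigl[\tfrac{\delta}{2}\bigl((u^\delta(s))^2\bigr)_x - \Delta^2\bigl(W'(u^\delta(s))-W'(u^0(s))\bigr)\Bigr]\,ds,
\end{equation*}
so everything reduces to controlling the two integrands in $Z_2$.

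Before handling the integrals, I would record the uniform a priori bound $\|u^\delta(s)\|_{Z_2}\le M$ valid for every $s\ge 0$, every $\delta\in[0,1]$ and every $u_0\in K$: this is precisely Remark~\ref{rem-unib}, combined with the fact that the compact set $K\subset Z_2$ is bounded. The one-dimensional Sobolev embedding $\pHa{2}\hookrightarrow W^{1,\infty}$ then produces pointwise-in-$s$ bounds
\[
\|((u^\delta)^2)_x\|_{L^2}\le C(M),\qquad \|W'(u^\delta)-W'(u^0)\|_{H^1}\le C(M)\,\|e(s)\|_{Z_2},
\]
the second one following by applying the chain rule to the polynomial $W'$ and distributing factors via H\"older.

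For the semigroup side I would exploit that $A=-\Delta^3$ is Fourier-diagonal with symbol $|\xi|^6$. A direct Fourier computation (equivalently, a careful iteration of $\|A^\alpha e^{-At}\|_{L^2\to L^2}\le Mt^{-\alpha}e^{-\lambda t}$) yields the two smoothing estimates
\begin{equation*}
\|e^{-At}w\|_{H^2}\le Mt^{-1/3}e^{-\lambda t}\|w\|_{L^2},\qquad \|e^{-At}\Delta^2 v\|_{H^2}\le Mt^{-5/6}e^{-\lambda t}\|v\|_{H^1}.
\end{equation*}
Inserting these into the Duhamel formula and using the bounds of the previous paragraph produces
\[
\|e(t)\|_{Z_2}\le C(M)\delta\int_0^t (t-s)^{-1/3}e^{-\lambda(t-s)}\,ds + C(M)\int_0^t (t-s)^{-5/6}e^{-\lambda(t-s)}\|e(s)\|_{Z_2}\,ds,
\]
and the singular Gronwall inequality \cite[Lemma~7.1.1]{henry} then gives $\|e(t)\|_{Z_2}\le C(t,M)\delta$. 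Since $C(t,M)$ is independent of $u_0\in K$, this delivers the required uniform convergence as $\delta\to 0$.

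The main obstacle is to select the fractional powers so that the singularity at $s=t$ remains integrable \emph{and} the estimate feeds back at the $H^2$ level. The naive $t^{-1}$ bound for $e^{-At}\Delta^2\colon L^2\to H^2$ is not integrable; sharpening it to $t^{-5/6}$ forces us to work one derivative higher and absorb the cost into the $H^1$ Lipschitz estimate for $W'$, which is where the smoothness of the polynomial $W'$ and the one-dimensional Sobolev algebra structure enter. Once this balance is in place, the argument is a routine application of Henry's semi-linear theory.
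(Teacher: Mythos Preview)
Your argument is correct and takes a genuinely different route from the paper's proof. The paper proceeds by a three-step bootstrap: first it tests the difference equation for $w=u^\delta-u^0$ with $(-\Delta)^{-2}w$ and $(-\Delta)^{-3}w$, combines the two resulting energy identities to cancel a bad $\|w\|_{L^2}^2$ term, and applies a bespoke Gronwall-type inequality from \cite{kory12} to obtain $\|(-\Delta)^{-1}w\|_{L^2}^2\le C\delta$; it then interpolates against the uniform $H^2$ bound to reach $L^2$ and $H^1$ convergence; and only in a final step does it invoke the variation-of-constants formula to upgrade to $Z_2$, using dominated convergence rather than a rate. By contrast, you stay entirely at the $Z_2$ level from the outset: the uniform bound of Remark~\ref{rem-unib} lets you estimate the nonlinear increment $W'(u^\delta)-W'(u^0)$ Lipschitz in $H^1$, the Fourier-diagonal structure of $A=-\Delta^3$ gives the smoothing estimate $\|e^{-At}\Delta^2 v\|_{H^2}\le Mt^{-5/6}e^{-\lambda t}\|v\|_{H^1}$ with an integrable singularity, and Henry's singular Gronwall lemma closes the loop, yielding the quantitative bound $\|e(t)\|_{Z_2}\le C(t,M)\delta$. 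Your approach is shorter, more self-contained (it does not call on the auxiliary lemma from \cite{kory12}), and in fact delivers a linear rate in $\delta$ at the $Z_2$ level, which the paper's final dominated-convergence step does not. The paper's low-regularity energy route may be more adaptable to settings where the semigroup smoothing is less explicit, but in the present one-dimensional Fourier-diagonal situation your direct semigroup argument is the cleaner choice.
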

\begin{proof}
For a fixed $u_0\in K$, set $u = S_\delta(\cdot) u_0$ and $v = S_0(\cdot)u_0$.
We establish it in a few steps. 

{\it Step 1.} The following estimate is valid for any $t>0$:
\begin{equation}
\label{EQcclaim}
	||(-\Delta)^{-1}(u-v)||_{L^2}^2(t) \le C\delta 
					\,.
\end{equation}
Given \eqref{EQcclaim} the result will follow because we have already established uniform estimates on $\|\Delta u\|_{L^2}$ and $\|\Delta v\|_{L^2}$, established in the course of proof of Proposition \ref{pr53}.
Indeed, set $w = u-v$ and we notice
\begin{eqnarray*}
 \| u- v\|^2_{L^2}(t) &= &\int_0^L w w \,dx = \int_0^L w (-\Delta) (-\Delta)^{-1} w \,dx
 = \int_0^L  (-\Delta) w (-\Delta)^{-1} w \,dx \\
& \le &\|\Delta(u- v)\|_{L^2}(t) \|(-\Delta)^{-1}(u- v)\|_{L^2}(t) .
\end{eqnarray*}
Hence,
\begin{equation}\label{EQstar}
\lim_{\delta\to 0} \|u - v\|_{L^2}(t) =0,
\end{equation}
provided that (\ref{EQcclaim}) holds.

We observe that $w$ satisfies
\begin{equation}\label{rnie}
	w_t = \Delta^3w - \Delta^2( w(u^2 + uv + v^2) - w) + \delta(u^2)_x
	\,.
\end{equation}
Testing this equation with $(-\Delta)^{-2}w$ and integrating, we find
\begin{align*}
\frac12 \frac{d}{dt} ||(-\Delta)^{-1}w||_{L^2}^2
&=
	\IP{(-\Delta)^{-2}w}{\Delta^3w - \Delta^2( w(u^2 + uv + v^2) - w) + \delta(u^2)_x}
\\
&=
	  \IP{w}{\Delta w}
	+ \vn{w}_{L^2}^2
	- \IP{w}{w(u^2 + uv + v^2)}
	+ \delta \IP{(-\Delta)^{-2}w}{(u^2)_x}
\,.
\end{align*}
Note that $u^2 + uv + v^2 \ge \frac{u^2+v^2}{2} > 0$, so we set $\phi = \sqrt{u^2 + uv + v^2} > 0$. 
Then
\begin{equation}
\label{EQc1}
\frac12 \frac{d}{dt} ||(-\Delta)^{-1}w||_{L^2}^2
	+ \vn{w_x}_{L^2}^2
	+ \vn{w\phi}_{L^2}^2
=
	  \vn{w}_{L^2}^2
	+ \delta \IP{(-\Delta)^{-2}w}{(u^2)_x}
\,.
\end{equation}
The first term on the RHS needs to be dealt with.

We obtain another energy estimate by multiplying (\ref{rnie}) with $(-\Delta)^{-3}w$ and integrating:
\begin{align*}
\frac12 \frac{d}{dt} ||(-\Delta)^{-\frac32}w||_{L^2}^2
&=
	-\IP{(-\Delta)^{-3}w}{\Delta^3w - \Delta^2( w(u^2 + uv + v^2) - w) + \delta(u^2)_x}
\\
&=
	- \vn{w}_{L^2}^2
	+ \IP{(-\Delta)^{-1}w}{w(u^2 + uv + v^2 - 1)}
	- \delta \IP{(-\Delta)^{-3}w}{(u^2)_x}
\,.
\end{align*}
Adding this equality to \eqref{EQc1} cancels out the troublesome
$\vn{w}_{L^2}^2$ term. Integrating the result (recall that $w$ vanishes at the initial time) yields
\begin{align*}
\frac12 ||(-\Delta)^{-\frac32}w||_{L^2}^2(t)
&+ \frac12 ||(-\Delta)^{-1}w||_{L^2}^2(t)
	+ \int_0^t \big(\vn{w_x}_{L^2}^2(s) + \vn{w\phi}_{L^2}^2(s)\big)\,ds
\\
&=
	  \int_0^t \IP{(-\Delta)^{-1}w}{w(\phi^2 - 1)}(s)\,ds
	+ \delta N
\,,
\end{align*}
where we have set
\[
N(u,v) = \int_0^t \IP{[(-\Delta)^{-2} - (-\Delta)^{-3}]w}{(u^2)_x}(s)\,ds\,.
\]	
Now, using the Poincar\'e inequality and dropping some good positive terms we find
\begin{equation}
\label{EQc2}
\frac12 ||(-\Delta)^{-1}w||_{L^2}^2(t)
	+ \min\{1, L^{-2}\}\int_0^t\big( \vn{w}_{L^2}^2(s)
	+  \vn{w\phi}_{L^2}^2(s)\big)\,ds
\le
	  \int_0^s \vn{(-\Delta)^{-1}w}_{L^2}(s)\vn{w(\phi^2 - 1)}_{L^2}(s)\,ds
	+ \delta N
\,.
\end{equation}
Let us notice that
$$
\| w \|^2_{L^2} +  \| w \phi\|^2_{L^2} = \| w (\phi^2 + 1) ^{1/2}\|^2_{L^2} 
$$
and
$$
\vn{w(\phi^2 - 1)}_{L^2} \le \vn{w(\phi^2 + 1)^{1/2}}_{L^2} \|(\phi^2 + 1)^{1/2}\|_{L^\infty}. 
$$
Set $\gamma = \min\{1, L^{-2}\}^{1/2}$  and
$$
a(t) = \gamma \vn{w {(\phi^2+1)^{1/2}}}_{L^2}(t),
\qquad
b(t) = \frac12\vn{(-\Delta)^{-1}w}_{L^2}(t), \qquad
c(t) = {\frac 2\gamma \|(\phi^2 +1)^{1/2}\|_{L^\infty}(t)}.
$$
Lemma 3.1 in \cite{kory12} states that an inequality of the form
\[
	\int_0^t a^2(s)\,ds + b^2(t)
	\le \int_0^t a(s)b(s)c(s)\,ds + \delta N(t)
\]
implies
\[
	b^2(t) \le 2\delta N(t)\exp\Big( \int_0^t c^2(s)\,ds \Big)
\]
and
\[
	\int_0^t a^2(s)\,ds \le 2\delta N(t)\int_0^t  c^2(s)\exp\bigg( \int_0^t c^2(s)\,ds \bigg)\,ds + 2\delta N(t)
\,.
\]
Note that for any fixed $t>0$ we have uniform boundedness of $N$.
Moreover, the estimate 
(\ref{eq-unib}) and Remark \ref{rem-unib}
imply that
\begin{eqnarray*}
 \int_0^t c^2(s)\,ds &\le &
 \frac{4}{\gamma^2} \int_0^t (2( \| u\|^2_{L^\infty}(s) + \| v\|^2_{L^\infty} (s)) + 1)\,ds \\
&\le&\frac 4{\gamma^2}  \int_0^t (2 L^2( \| u_x\|^2_{L^2}(s) + \| v_x\|^2_{L^2} (s)) +1)\, ds\\
&\le& \frac{16L^2}{\gamma^2} t[(4L^2+1) C_2 +1],
\end{eqnarray*}
where $C_2$ depends on the radius
of  $\pHa2$-ball containing the compact set $K$.
Thus, the estimate \eqref{EQcclaim} follows.

{\it Step 2.} We notice that for any $t\ge 0$
$$
\lim_{\delta\to 0} \| u - v\|_{H^1} (t)=0
$$
uniformly with respect to $u_0\in K$. Indeed, using the notation from the previous step we see the integration by parts yields
$$
\|\nabla w\|_{L^2}^2 = \int_{\bT} \nabla w \nabla w\, dx = 
-\int_{\bT} w\Delta w\, dx.
$$
Now, our claim follows from (\ref{EQstar}) in the first step and the uniform boundedness of $u$, $v$ in $Z_2$.

{\it Step 3.} Treating $u$ and $v$ as given, we see that (\ref{rnie}) is a linear eq. for $w$ with zero initial condition. Then, using the notation of Step 1, the analog of the constant variation formula, cf. (\ref{eq-convar}), yields,
\begin{align*}
\| w \|_{Z_2}(t) &\le C \int_0^t \| (-\Delta)^{2\frac12} 
e^{\Delta^3(t-s)} \nabla ( w [\phi^2 -1])(s)\|_{L^2}\, ds +
\delta C \int_0^t \| -\Delta e^{\Delta^3(t-s)}  u(s) u_x(s)\|_{L^2}\,ds\PR{=:I_1+\delta I_2}.
\end{align*}
By Step 2, the integrand \PR{of $I_1$} goes to zero uniformly in $u_0\in K$ for each $s\in [0,t]$. The integrand is bounded by an integrable function, so we may pass to the limit with $\delta\to 0$. \PR{We also notice that $I_2$ is bounded uniformly in $\delta$, so $\delta I_2$ goes to zero.}
\end{proof}

\subsubsection{Part (d) of Theorem \ref{thmRob}}\label{subsec-d}

The first part of (d) on convergence of the set of equilibria as $\delta\rightarrow0$ is Corollary \ref{rmkHD}. Here, we have to use the topology of $Z_2$. We notice that the
range of the values of \GW{the} parameter $\delta$ \GW{has been so far restricted} to \GW{lie in} an interval $(0, \delta_1)$, see Theorem \ref{t-sm-dep} and Lemma \ref{co-dim0}.
It remains to prove the second part of (d), which is a kind of Liouville theorem for eternal solutions.
To this end, we show: 
\begin{theorem}\label{TMgap} {Let us suppose that $\delta_1$ is fixed as above.}
There exist\GW{s} $\eta>0$ 
such that for all $\delta\in(0,\delta_1)$ if $\xi_\delta:\bR \to \cA_\delta$ is an eternal solution to (\ref{eqn:hcch}), then
\begin{equation}\label{hyped}
 \sup_{t\in \bR} \dist_{\pHa2}(\xi_\de(t), \cE^{\delta,j})\le \eta \Rightarrow
 \xi_\de(t)\in \cE^{\delta,j}\quad \hbox{for all } t\in \bR.
\end{equation}
\end{theorem}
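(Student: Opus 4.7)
The plan is to argue by contradiction, decoupling motion along $\cE^{\delta,j}$ (the neutral shift direction) from motion transverse to it (governed by a hyperbolic linearisation). Assume $\xi_\delta:\bR\to\cA_\delta$ is an eternal solution with $\sup_{t\in\bR}\dist_{\pHa2}(\xi_\delta(t),\cE^{\delta,j})\le\eta$ while $\xi_\delta(t_*)\notin\cE^{\delta,j}$ for some $t_*$.

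\textbf{Step 1 (Tubular coordinates and reduced equations).} By Lemma~\ref{co-dim0} the kernel of $\cL^\delta_{\tau_s v^{\delta,j}}$ is one-dimensional, spanned by $(\tau_s v^{\delta,j})_x$; write $\psi^*_s$ for a normalised zero eigenvector of the adjoint. For $\eta$ sufficiently small, the implicit function theorem applied to $\Phi(u,s):=\langle u-\tau_s v^{\delta,j},\psi^*_s\rangle$ produces a unique decomposition
\[
\xi_\delta(t)=\tau_{s(t)}v^{\delta,j}+w(t),\qquad\langle w(t),\psi^*_{s(t)}\rangle=0,\qquad\|w(t)\|_{\pHa2}\le C\eta,
\]
with $s:\bR\to\bR/L\bZ$ of class $C^1$. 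Substituting into \eqref{eqn:hcch} and Taylor-expanding $F_\delta$ about $\tau_s v^{\delta,j}$ yields
\[
\dot s\,(\tau_s v^{\delta,j})_x+\dot w=\cL^\delta_{\tau_s v^{\delta,j}}w+N(s,w),\qquad\|N(s,w)\|_{L^2}\le C\|w\|_{\pHa2}^2.
\]
Pairing with $\psi^*_s$ gives the scalar equation $\dot s=O(\|w\|^2)$. Using the spectral projectors $P_1,P_2$ of Section~\ref{s-sa}, decompose $w=w_1+w_2$ with $w_k\in Y_k$; because $\cL^\delta$ commutes with these projectors, the transverse dynamics read $\dot w_k=\cL_k w_k+P_k N(s,w)$ with $\cL_k:=\cL^\delta_{\tau_s v^{\delta,j}}|_{Y_k}$. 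The spectrum of $\cL_k$ lies at distance at least $\lambda_*>0$ from the imaginary axis, with $\lambda_*$ independent of $s\in[0,L)$ (by translation invariance along the family) and of $\delta\in(0,\delta_1)$ (by continuity of the Riesz projectors already exploited in Lemma~\ref{co-dim0}).

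\textbf{Step 2 (Rigidity: $w\equiv 0$).} On the finite-dimensional unstable component $Y_2$ the inverse semigroup satisfies $\|e^{-\tau\cL_2}\|\le Ce^{-\lambda_*\tau}$ for $\tau\ge0$. The forward variation-of-constants formula together with $\sup_t\|w(t)\|_{\pHa2}\le C\eta$ (so that $\|e^{-(t-t_0)\cL_2}w_2(t)\|\to0$ as $t\to\infty$) gives
\[
w_2(t_0)=-\int_{t_0}^{\infty}e^{(t_0-s)\cL_2}P_2 N(s,w(s))\,ds,
\]
whence $\|w_2(t_0)\|\le(C/\lambda_*)\|w\|_\infty^2\le(C/\lambda_*)\eta\|w\|_\infty$ for every $t_0$. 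Symmetrically, $\|e^{\tau\cL_1}\|\le Ce^{-\lambda_*\tau}$ for $\tau\ge0$, and the backward analogue yields $\|w_1(t_0)\|\le(C/\lambda_*)\eta\|w\|_\infty$. Adding these two estimates and choosing $\eta<\lambda_*/(2C)$ forces $\|w\|_\infty=0$, that is, $w\equiv 0$. This quantitative contraction is a concrete realisation of the ``definite reduction in the velocity'' alluded to in the introduction: a nontrivial transverse component cannot be sustained along a bi-infinite orbit, since it would be amplified by the hyperbolic directions beyond the allowed $\eta$-tube.

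\textbf{Step 3 (Conclusion and main obstacle).} With $w\equiv 0$ the scalar equation collapses to $\dot s=0$; hence $\xi_\delta(t)=\tau_{s_0}v^{\delta,j}\in\cE^{\delta,j}$ for every $t$, contradicting the choice of $t_*$. The principal difficulty is Step~2: because $\cL^\delta$ is not self-adjoint, the semigroup estimates and projector norms need to be controlled uniformly in $s$ and in $\delta\in(0,\delta_1)$. This is supplied by Proposition~\ref{p2.18} (spectral equivalence of $\cL^\delta$ with a self-adjoint operator modulo~$\Delta$) together with Corollary~\ref{rmkHD} and the continuity argument already used in Lemma~\ref{co-dim0}; these combine to guarantee that $\lambda_*$ and the implicit constants do not degenerate as $\delta\to0$ or as $s$ varies. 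A secondary technicality is promoting the tubular coordinates from a neighbourhood of a single equilibrium to a global neighbourhood of the loop $\cE^{\delta,j}$, which is routine given the smooth shift dependence of $v^{\delta,j}$.
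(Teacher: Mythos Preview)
Your approach differs from the paper's and is in principle cleaner: you work in a global tubular neighbourhood of the loop $\cE^{\delta,j}$ and exploit boundedness of the eternal solution on all of $\bR$ to write $w_k$ via integrals over half-lines, closing with a single contraction estimate. The paper instead localises in a Hale--Raugel neighbourhood $U_{\rho,\eta}(u_0)$ at a \emph{fixed} $u_0\in\cE^{\delta,j}$, with \emph{fixed} projectors and linearisation $\cB_1,\cB_2$; the price is that the residual $G(x,\tilde y)\tilde y$ (their (\ref{eter5})) picks up a term linear in $\tilde y$ with coefficient of size $\rho$, so contraction requires $\rho$ small, and the waiting-time estimate (Proposition~\ref{PNwait}) is then needed to guarantee the trajectory remains in $U_{\rho,\eta}(u_0)$ long enough for the contraction to take effect.

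There is, however, a genuine gap in your Step~2. Your equation $\dot w_k=\cL_k w_k+P_kN$ has $\cL_k=\cL^\delta_{\tau_{s(t)}v^{\delta,j}}|_{Y_k}$ depending on $t$ through $s(t)$, and the projectors---if they are to commute with this operator, as you assert---must likewise be $s(t)$-dependent. The autonomous Duhamel formula with a fixed semigroup $e^{\tau\cL_k}$ is therefore not valid as written; over an infinite half-line $s(t)$ may drift around the entire loop, so this is not a small perturbation of an autonomous problem, and your dismissal of the globalisation as ``routine'' does not cover it. The clean repair is to exploit translation invariance and pass to the moving frame $W(t):=\tau_{-s(t)}w(t)$: since $\cL^\delta_{\tau_s v}=\tau_s\cL^\delta_v\tau_{-s}$ and the spectral projectors transform covariantly, $W$ satisfies $\dot W=\cL^\delta_{v^{\delta,j}}W+\tilde N-\dot s\,\partial_xW$ with \emph{fixed} linear part; the extra term $\dot s\,\partial_xW=O(\|w\|^2\|W\|_{\pHa1})$ is cubic and absorbable into the nonlinearity, after which your contraction argument goes through. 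A secondary issue: the estimate $\|N(s,w)\|_{L^2}\le C\|w\|_{\pHa2}^2$ is not correct as stated, since $N$ contains $\Delta^2$ applied to a quadratic expression in $w$; you must either invoke higher regularity of $w$ (available on the attractor) or, as the paper does in Lemma~\ref{lete1}, measure $N$ in a negative-order norm matched to the analytic-semigroup smoothing used in the Duhamel integral.
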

We will separately treat the cases where $\cE^{\delta,j}$ (for $j>0$) and $\cE^{\delta,0}=\{0\}$.
First, we deal with $j>0$ and consider a uniform tubular neighbourhood {of $\cE^{\delta,j}$} in $\pHa2$ with radius $r$,
\[
(\mathcal{E}^{\delta,j})_r = \{ v\in \pHa2: \dist_{\pHa2}(v, \mathcal{E}^{\delta,j})<r\}
\,.
\]
The constant $\eta$ in Theorem \ref{TMgap} above (which is universal) gives a `nonexistence gap' for eternal solutions: there can not
exist any non-trivial eternal solutions within $(\SE^{\delta,j})_\eta$.

When working with the equilibrium set and its tubular neighbourhoods, one
fundamental fact is that for $\eta$ small enough, the nearest point projection
is well-defined.

\begin{lemma}\label{l5.11}
There exists an $\eta^*$ such that for all $\eta < \eta^*$ the nearest point projection $\pi:(\SE^{\delta,j})_\eta\rightarrow\SE^{\delta,j}$ is well-defined.
\end{lemma}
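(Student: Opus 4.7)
The plan is to identify $\SE^{\delta,j}$ (for $j\ge 1$; the case $j=0$ reduces to the singleton $\{0\}$ where the claim is trivial) as a smooth compact embedded one-dimensional submanifold of $\pHa2$, topologically a circle, and then run the standard nearest-point projection argument via the implicit function theorem. First I would verify that $\Phi: s\mapsto\tau_s u^{\delta,j}$ is a smooth injective immersion from $\mathbb R/L^{(j)}\mathbb Z$ into $\pHa2$, where $L^{(j)}$ denotes the principal period of $u^{\delta,j}$ (necessarily a divisor of $L$). Smoothness of the shift action on $\pHa2$ is immediate from $u^{\delta,j}\in\pHa6$; injectivity on a single fundamental period comes from Proposition \ref{p-shift} together with the definition of principal period; and the immersion property is the computation
\[
\Phi'(s)=\tau_s\,\partial_x u^{\delta,j},\qquad\|\Phi'(s)\|_{\pHa2}=\|\partial_x u^{\delta,j}\|_{\pHa2}=:c_{\delta,j}>0,
\]
where positivity uses that $u^{\delta,j}$ is non-constant.

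For the projection itself, given $v\in(\SE^{\delta,j})_\eta$ consider the squared-distance function $f_v(s)=\tfrac12\|v-\Phi(s)\|^2_{\pHa2}$, which is smooth and periodic in $s$ and hence attains a minimum at some $s^\ast$ with $\|v-\Phi(s^\ast)\|_{\pHa2}\le\eta$. The critical-point equation $F(s,v):=\langle v-\Phi(s),\Phi'(s)\rangle_{\pHa2}=0$ has, at $v_0=\Phi(s_0)$, partial derivative $\partial_s F(s_0,v_0)=-\|\Phi'(s_0)\|^2_{\pHa2}=-c_{\delta,j}^2\ne 0$, so the implicit function theorem supplies, in a $\pHa2$-neighbourhood of each $v_0\in\SE^{\delta,j}$, a unique smooth critical point $s(v)$. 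A further differentiation gives
\[
f_v''(s^\ast)\ge c_{\delta,j}^2-\eta\,\|\partial_{xx}u^{\delta,j}\|_{\pHa2},
\]
which is strictly positive once $\eta<c_{\delta,j}^2/\bigl(2\|\partial_{xx}u^{\delta,j}\|_{\pHa2}\bigr)$; so every minimiser is non-degenerate. For global uniqueness I would cover $\SE^{\delta,j}$ by finitely many IFT-patches of uniform $s$-width and check that, on the complement of any one such patch around a candidate minimiser, $f_v$ is bounded below by a positive constant as $v$ approaches $\SE^{\delta,j}$; this rules out competing minima once $\eta$ is small enough.

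The principal issue worth flagging, and the place where I would focus most care, is uniformity of $\eta^\ast$ in $\delta$, which is needed for the subsequent application to Theorem \ref{TMgap}. This is delivered by Theorem \ref{t-sm-dep}: the estimate $\|u^{\delta,j}-u^{0,j}\|_{H^6}\le C\delta$ ensures that $c_{\delta,j}$ is bounded away from zero and $\|\partial_{xx}u^{\delta,j}\|_{\pHa2}$ is bounded above, uniformly in $\delta\in[0,\delta_1)$. Consequently $\eta^\ast$ may be chosen independently of $\delta$, as required.
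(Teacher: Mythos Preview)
Your proof is correct and follows a standard route to the tubular neighbourhood theorem, though not the one the paper takes. The paper works directly with the normal bundle: it considers the map $h:N\SE\to\pHa2$ given by $h(u,v)=u+v$, checks that $dh_{(u,0)}$ is nonsingular (it carries the tangent and normal directions at $(u,0)$ bijectively onto $T_u\SE$ and $N_u\SE$ respectively), invokes the inverse function theorem, and then uses compactness of $\SE$ to extract a uniform $\eta^*$; the projection is defined as $\pi=i\circ h^{-1}$ with $i(u,v)=u$, and a short orthogonality argument confirms it is the nearest point. Your approach via the squared-distance function $f_v(s)$ and the implicit function theorem applied to its critical-point equation is equally valid and somewhat more hands-on: it bypasses the normal-bundle formalism and produces explicit constants, for instance your nondegeneracy bound $f_v''(s^*)\ge c_{\delta,j}^2-\eta\,\|\partial_{xx}u^{\delta,j}\|_{\pHa2}$. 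Two remarks. First, your citation of Proposition~\ref{p-shift} for injectivity applies only to $\delta=0$; for $\delta>0$ injectivity of $s\mapsto\tau_s u^{\delta,j}$ on a fundamental domain is immediate from the definition of the principal period and needs no further input. Second, your final paragraph on uniformity of $\eta^*$ in $\delta$ is a genuine addition: the paper's proof does not address this point, yet as you observe it is required for the verification of hypothesis~(d) in Theorem~\ref{thmRob}, and your appeal to the $H^6$-estimate of Theorem~\ref{t-sm-dep} is exactly the right way to secure it.
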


\begin{proof}%{{{
In this proof we omit $\delta, j$ superscripts on $\SE$.
The proof follows by applying a generalised inverse function theorem (see e.g. \cite{NirenbergBook}) to a \GW{suitable} map.
First, let us recall the notion of normal bundle:
\[
	N\SE = \Big\{
		(u,v) \in Z_2 \times Z_2\, :\, u\in\SE\,,\ v\in N_u\SE
		\Big\}
		\subset T\pHa2
\]
where $N_u\SE$ is the orthogonal complement of $T_u\SE$ ($u\in\SE$) in $T_u\pHa2 \simeq \pHa2$. 

Set $h:N\SE\rightarrow \pHa2$ to be
\[
	h(u,v) = u+v
	\,.
\]
The curve $\SE$ is a $C^1$-submanifold of $\pHa2$; to see this, we can take as an
embedding (the restriction to the principal period of) the shift map centred at
$u_0$: $\SE = \overline\SE([0,T))$ where $\overline\SE$ is the map $s \mapsto
\tau_su_0$.

Then $dh_{(u,0)}$ is non-singular, because it maps $T_{(u,0)}(\pHa2\times\{0\})$
bijectively onto $T_u\SE$ (note that $T_{(u,0)}(\pHa2\times\{0\}) \subset
T_{(u,0)}N\SE$) and maps $T_{(u,0)}(\{u\}\times N_u\SE)$ bijectively onto
$N_u\SE\subset T_u\pHa2$.
Note that $h$ maps $\SE\times\{0\}$ diffeomorphically onto $\SE$.

The inverse function theorem then implies that $h$ maps an open neighbourhood $U$ of $\SE\times\{0\}$ in $N\SE$ diffeomorphically onto a neighbourhood $V$ of $\SE$ in $\pHa2$.
Compactness of $\SE$ implies that there exists an $\eta^*$ such that $(\SE)_{\eta^*} \subset U$.

Define $\pi:(\SE)_{\eta^*}\rightarrow\SE$ by
\[
	\pi(y) = (i\circ h^{-1})(y)
\]
where $i(u,v) = u$.
The map $\pi$ is a submersion because $i$ is a submersion and $h^{-1}$ is a diffeomorphism.

To see that $\pi(y)$ is the closest point in $\SE$ to $y$, suppose that there is another point $z\in\SE$ such that $z$ is closest to $y$.
The sphere centred at $y$ with radius $|y-z|$ is tangential to $\SE$ at $z$. Thus $y-z\perp \SE$ at $z$, so $y-z \in N_z\SE$.
This means
\[
	y = z + (y-z) = h(z, y-z)
\]
or $\pi(y) = z$.
\end{proof}
Due to Lemma \ref{l5.11} we may write $\pi(\xi(t)) = \tau_{s(t)}  u^{\delta, j} $, where  $s\in[0,L_{\text{prin}})$ is unique and $L_{\text{prin}}$ is the principal period. 

Let us fix $\xi_0 = \xi_\delta(t_0)$ and set $u_0 =\pi(\xi_0)$. By Hale-Raugel theory, see Section \ref{s-sa}, there exists a connected neighbourhood $V$ containing $u_0$
such that the decomposition
\begin{equation}\label{EQ1}
	\xi_\delta(t) = (x(t),y_1(t),y_2(t))
\end{equation}
holds, where each of the coordinates satisfy a uniformly parabolic evolution equation
with a specific form (see (\ref{co-rs}).
The coordinates are chosen with the decomposition
\[
	Z_\PR{2} = X\oplus Y_1\oplus Y_2
\]
and corresponding projection operators $P_0$, $P_1$ and $P_2$ in mind.
Note that $X$ and $Y_2$ are finite-dimensional spaces.
Let us call $V$ a \emph{Hale-Raugel neighbourhood} of $u_0$.

We work in a Hale-Raugel neighborhood $U$ centred at $u_0$.
This means that the origin of the coordinate system $(x, y_1, y_2)$ corresponds
to $u_0$. We may assume (by shrinking $U$ and $\eta$ if needed) that
\begin{equation}\label{eter0}
U= B_{\pHa{2}}(u_0, \rho)\cap (\mathcal{E})_\eta  =: U_{\rho,\eta}(u_0)
\,,
\end{equation}
where $\eta<\rho$.

Before we restrict our attention on the dynamics in $U_{\rho,\eta}(u_0)$ we will make a more general comment. 
At any $v^{s}:= \tau_{s}  u^{\delta, j} $ the dynamics in the corresponding Hale-Raugel neighborhood is in the form (\ref{co-r}). In order to stress that (\ref{co-r}) depends upon a specific choice of $s$ we will write the superscript $s$.
That is:
\begin{eqnarray}\label{co-rs}
 \dot x^s &=& \cC^s x^s +f^s(x^s,y^s_1,y^s_2),\nonumber\\
 \dot y^s_1 &=& \cB_1^s y^s_1 + g^s_1(x^s,y^s_1,y^s_2),\\
 \dot y^s_2 &=& \cB_2^s y^s_2 + g^s_2(x^s,y^s_1,y^s_2),\nonumber
\end{eqnarray}
where $\cC^s = P_0^s \cL^\delta_{v^s} = 0$, $\cB_1^s = P_1^s  \cL^\delta_{v^s}$, $\cB_2^s = P_2^s  \cL^\delta_{v^s}$.

We recall that 
\begin{equation}\label{co-sp}
\Re \sigma (\cB_1^s) \le \lambda_0<0,\qquad \Re \sigma (\cB_2^s) \ge \mu_0>0. 
\end{equation}
Lemma \ref{lem33}  implies 
\begin{equation*}
 \| g^s_1(0, y_1^s, y_2^s) \|_{L^2} + \| g^s_2(0, y_1^s, y_2^s) \|_{L^2}\le 
R(\| y_1^s\|_{\pHa5}, \| y_2^s\|_{\pHa5})\qquad 
R(\| y_1^s\|_{\pHa5}, \| y_2^s\|_{\pHa5})/(\| y_1^s\|_{\pHa5} + \| y_2^s\|_{\pHa5}) \to 0 .
\end{equation*}
However, we will need estimates in different norms.

Now, we study the \emph{transition time} $\Delta t$: this is the amount of time
taken for our eternal solution $\xi$ to traverse a Hale-Raugel neighbourhood
of this form.
It turns out that we can lengthen this time arbitrarily
by taking $\eta$ and $\rho$ small enough. In the forthcoming analysis we shall suppress the superscript $s$ that appears in the system (\ref{co-rs}).

\begin{proposition}
\label{PNwait} 
Let us suppose that $u_0$ is a steady state belonging to $\cE^{\delta,j}$, $j>0$. Then,
for each $M>0$ there exists a $\overline\rho<1$ with the following property.
For all $\rho \le \overline{\rho} $ and $U_{\rho,\eta}(u_0)$ defined in (\ref{eter0}) 
for $\eta = \rho^\beta<\eta^*$, where $\eta^*$ is given in Lemma \ref{l5.11}, and any $\beta>1$,
if $\xi_\delta$ is the eternal solution, then
\[
	\Delta t \ge M\,,
\]
where $\Delta t$ denotes the time taken for the solution $\xi_\delta$ to traverse $U_{\rho,\eta}(u_0)$.
\end{proposition}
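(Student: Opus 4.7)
The plan exploits the scale separation between the $\pHa2$-ball radius $\rho$ and the tube radius $\eta=\rho^\beta$ with $\beta>1$: within $U_{\rho,\eta}(u_0)$ the trajectory $\xi_\delta$ stays at $\pHa2$-distance $\le\eta$ from $\cE^{\delta,j}$, so its velocity along the one-dimensional kernel direction $X=\ker\cL^\delta_{u_0}$ is essentially of size $\eta$, while any traversal demands tangential displacement of order $\rho$. Hence the transit time is at least of order $\rho/\eta=\rho^{1-\beta}$, which blows up as $\rho\to 0$.

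In detail, I first install Hale-Raugel coordinates $(x,y_1,y_2)$ centred at $u_0$. By Lemma \ref{co-dim0}, $\dim X=1$, and the tangent at $u_0$ to the translation family $\cE^{\delta,j}=\{\tau_s u_0\}$ is $(u_0)_x$, which spans $X$. The implicit function theorem applied to the $Y_1$-, $Y_2$-components of the equilibrium equation then represents $\cE^{\delta,j}$ locally as a $C^1$ graph $x\mapsto(x,\phi_1(x),\phi_2(x))$ with $\phi_i(0)=\phi_i'(0)=0$, and in particular $\|\phi_i(x)\|_{\pHa2}\le C|x|^2$ for $|x|$ small. Since $\cC=0$ on $X$, the equilibrium relation reduces to $f(x,\phi_1(x),\phi_2(x))=0$. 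Meanwhile, the hypothesis together with Lemma \ref{l5.11} and the boundedness of the projections $P_1,P_2$ yields
\[
\|y_i(t)-\phi_i(x(t))\|_{\pHa2}\le K\eta\qquad(i=1,2).
\]

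Next, I estimate $\dot x=f(x,y_1,y_2)-f(x,\phi_1(x),\phi_2(x))$. The nonlinearity $F_\delta:\pHa{7}\to\pHa{2}$ is $C^1$ by Lemma \ref{lem33}, but the transverse bound above lives in $\pHa2$, not $\pHa7$. I bridge this by noting that $\cA_\delta$ (and thus all differences that appear) is bounded in $\pHa{N}$ for every $N\in\bN$ by iterated parabolic smoothing, cf.\ Proposition \ref{pr34}. Gagliardo-Nirenberg interpolation then gives
\[
\|y_i-\phi_i\|_{\pHa{7}}\le C_N\|y_i-\phi_i\|_{\pHa{2}}^{\theta}\|y_i-\phi_i\|_{\pHa{N}}^{1-\theta}\le C'\eta^\theta,
\]
with $\theta\in(0,1)$ arbitrarily close to $1$ by taking $N$ large. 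The mean-value theorem now delivers $|\dot x(t)|\le C''\eta^\theta$.

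Finally, at any exit point $\|\xi_\delta(t)-u_0\|_{\pHa2}=\rho$, the transverse bound combined with the quadratic estimate on $\phi_i$ forces $|x(t)|\ge\rho/(2C_3)$ for $\rho$ small, so the $x$-displacement during a traversal is at least $\rho/(4C_3)$. Together with the velocity bound,
\[
\Delta t\ge\frac{1}{4C_3 C''}\,\rho^{\,1-\beta\theta}.
\]
Given $M$, I first fix $\theta\in(1/\beta,1)$ (which is possible because $\beta>1$), and then choose $\overline\rho$ small enough that $(4C_3 C'')^{-1}\overline\rho^{\,1-\beta\theta}\ge M$, which is legitimate since $1-\beta\theta<0$. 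The main technical obstacle is precisely this control of $\dot x$ in a norm compatible both with the $\pHa2$-hypothesis of the proposition and the $\pHa{7}$-regularity requirement of $F_\delta$ from Lemma \ref{lem33}; the interpolation above circumvents it at the cost of a small exponent loss, which is absorbed by the freedom to take $\theta<1$ close to $1$.
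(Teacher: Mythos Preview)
Your argument is correct, but the paper takes a considerably shorter route that bypasses the interpolation machinery entirely. Instead of projecting onto the $X$-component and estimating $\dot x$, the paper bounds the \emph{full} velocity $\xi_t$ in the \emph{weak} norm $\pHa{-4}$: writing
\[
\xi_t = -A(\xi-\pi\xi) + \int_0^1 DF_\delta\big((1-s)\xi + s\pi\xi\big)\,ds\,(\xi-\pi\xi),
\]
the sixth-order operator $A=-\Delta^3$ maps $\pHa2\to\pHa{-4}$ boundedly, and the lower-order nonlinear part is estimated directly from Lemma~\ref{lem222}. This gives $\|\xi_t\|_{\pHa{-4}}\le c\eta = c\rho^\beta$ using nothing more than the tube hypothesis in $\pHa2$. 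Since the arc $U_{\rho,\eta}(u_0)\cap\cE^{\delta,j}$ has length of order $\rho$ in \emph{any} Sobolev norm (it is a smooth curve parametrised by shift), the traversal time satisfies $\Delta t\gtrsim \rho/\rho^\beta=\rho^{1-\beta}$.

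What each approach buys: the paper's weak-norm trick avoids invoking attractor regularity in high norms, the Gagliardo--Nirenberg step, and the graph representation of $\cE^{\delta,j}$; it also delivers the clean exponent $1-\beta$ rather than $1-\beta\theta$. Your approach, on the other hand, stays entirely inside the Hale--Raugel coordinate framework that is used in the sequel anyway, and makes the mechanism (slow drift along $X$, controlled transverse distance) more explicit. The slight exponent loss is harmless since you only need $\Delta t\to\infty$, and your choice $\theta>1/\beta$ handles this.
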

\begin{proof}%{{{
As before, let us omit the $\delta, j$ superscripts.
We also omit the subscript $\delta$ on $\xi$.
Let us recall that $\pi \xi \in \SE^{\delta,j}$ 
and so $-A\pi\xi + F_\delta(\pi \xi) = 0$.
Therefore
\[
\xi_t = -A\xi + F_\delta (\xi) = - A\xi + F_\delta (\xi) + 
A\pi\xi -F_\delta (\pi\xi))
= -A(\xi - \pi\xi) + \int_0^1 DF_\delta(u^s)\,ds \, (\xi - \pi\xi)
\]
where $u^s = (1-s)\xi + s\pi \xi$. If we take into account the form of $F_\delta$, see  (\ref{rn:nl}) and Lemma \ref{lem222}, then we see
via the H\"older inequality the estimate
\begin{equation}
\label{EQwaitint}
	\vn{\xi_t}_{\pHa{-4}} \le \vn{\xi-\pi \xi}_{\pHa2}
	+ C \vn{\xi-\pi \xi}_{L^2}(\|\xi\|^2_{\pHa2} + \|\pi\xi\|^2_{\pHa2} + 1)
\,.
\end{equation}
Since $\pi\xi$ is the closest point projection, then our assumptions imply
\[
	\vn{\xi_t}_{\pHa{-4}} \le c\eta,
\]
where $c =c(\cE^{\delta,j},\eta).$
Let us now choose $\eta = \rho^\beta$ for any $\beta>1$.
This means
\[
	\vn{{\xi}_t}_{\pHa{-4}} \le c\rho^\beta 
\,.
\]
As the diameter of the Hale-Raugel neighbourhood is $c\rho$, because this is the size of $U_{\rho,\eta}(u_0)\cap \cE$ in any norm, we find that
\[
\Delta t
	\ge \frac{c\rho}{\vn{{\xi}_t}_{\pHa{-4}}} \ge \overline{c} \rho^{1-\beta}
\,.
\]
We can see that as $\rho\searrow0$, $\Delta t\rightarrow\infty$.
\end{proof}
Now, we turn our attention to the study of the norm of the eternal solution.
By the construction of the Hale-Raugel coordinate system the $x$-axis is
tangent to $\cE^{\delta,j}$ at $u_0$. Moreover, $\cE^{\delta,j}$ may be represented as the graph of a smooth function of $x$ in $U_{\rho,\eta}(u_0)$,
$$
U_{\rho,\eta}(u_0) \cap \cE^{\delta,j} = \{ (x, h(x)): \| x \|_{{Z_2}} < \rho \}
\,,
$$
where $h: X \to Y_1 \oplus Y_2$ is in $C^\infty$. In $U_{\rho,\eta}(u_0)$ we have dynamics given by \eqref{co-rs}, where we dropped the superscript $s$, which on $\SE^{\delta,j}$ takes the form
\begin{equation}\label{eter3}
 \begin{array}{l}
   0= f (x, h_1(x), h_2(x)),\\ 
   0= \cB_1 h_1(x) + g_1(x, h_1(x), h_2(x)),\\
  0= \cB_2 h_2(x) + g_2(x, h_1(x), h_2(x))
\,.
 \end{array}
\end{equation}
For an eternal solution $\xi_\delta(t) = (x(t), y_1(t), y_2(t))$,
we may use (\ref{eter3}) to recast (\ref{co-rs}) into a more useful
form.

For this purpose we  set $\tilde y = y - h(x)$. Here, we use the shorthand $y
= (y_1, y_2)$ and $h(x) = (h_1(x),h_2(x))$.
Hence,
\begin{equation}\label{eter4}
 \tilde y' = \cB \tilde y + g (x, \tilde y + h(x)) - g (x,  h(x)) =: 
 \cB \tilde y + G(x, \tilde y ) \tilde y \,,
\end{equation}
where $\cB \tilde y = (\cB_1 \tilde y_1, \cB_2 \tilde y_2)$ and
\begin{equation}\label{eter5}
 G(x, \tilde y )\tilde y = \int_0^1 \frac{d}{ds} g(x, s \tilde y + h(x))\,ds
\,.
\end{equation}
We have to estimate $v:= G(x, \tilde y ) \tilde y$ in a suitable norm. In order to do this, we have expose the structure of $g_1$, $g_2$ in (\ref{eter3}). We have to keep in mind that as $v$ belongs to a projected space, the image of $P_1+P_2$, we may not use directly the Sobolev norm but one which commutes with projection $P_1 + P_2$. In other words we have to work with the fractional powers of $\cL^\delta - \lambda=:\Lambda$, where $\cL^\delta$ is defined in (\ref{def-cL}), i.e. it is the linearisation at $u_0$ and $\lambda>0$ is big enough to guarantee that  $\Lambda$ has trivial kernel. Since we eventually want to estimate $\tilde y$ in the $\pHa{2}$-norm, we shall see that it is sufficient to bound 
$\| \Lambda^{-1/2} v\|_{L^2}$.

\begin{lemma}\label{lete1}
If $G$ is defined over $U_{\rho, \eta}(u_0)$ by (\ref{eter5}) and $\eta$, $\rho$ $\Lambda$ are as above, then
{
$$
\|\Lambda^{-1/2}  G(x, \tilde y ) \tilde y \|_{L^2} \le M \|\tilde y \|_{\pHa1},
$$}
where $M\le C \rho$.
\end{lemma}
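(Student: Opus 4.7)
The plan is to unpack the definition of $G$, trade the weight $\Lambda^{-1/2}$ for a bound in $\pHa{-3}$, and then exploit the quadratic vanishing of $F_\delta - F_\delta(u_0) - DF_\delta(u_0)$ at $u_0$ to pull out the factor $\rho$. First I would use the chain rule to write
\[
G(x,\tilde y)\tilde y = \int_0^1 (P_1 + P_2)\big[DF_\delta(u_0 + w^s) - DF_\delta(u_0)\big]\tilde y\,ds,
\]
where $w^s = x + s\tilde y + h(x)$. Because $\xi$ lies in $U_{\rho,\eta}(u_0)$ and $\pi\xi$ is the nearest point of $\SE^{\delta,j}$ to $\xi$, both $w^0 = \pi\xi - u_0$ and $w^1 = \xi - u_0$ have $\pHa2$-norm at most $\rho$, and by convexity $\|w^s\|_{\pHa2} \le \rho$ for every $s\in[0,1]$. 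Using $W'(u)=u^3-u$ and the formula (\ref{rn:nl}) for $F_\delta$, a direct computation gives the pointwise identity
\[
\big[DF_\delta(u_0 + w) - DF_\delta(u_0)\big]\tilde y = \delta(w\tilde y)_x - \Delta^2\!\big[(6u_0 w + 3w^2)\tilde y\big].
\]

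Second, I would identify the $\Lambda^{-1/2}$-norm with a negative Sobolev norm. Since $\Lambda$ is a sectorial operator with trivial kernel and principal part $-\Delta^3$, standard fractional-power theory produces $D(\Lambda^{1/2}) = \pHa3$ with equivalent norms, so by duality
\[
\|\Lambda^{-1/2} v\|_{L^2} \le C\,\|v\|_{\pHa{-3}}.
\]
The non-self-adjointness of $\Lambda$ is the only genuine issue here: it is resolved by invoking the spectral equivalence established in Proposition \ref{p2.18}, which replaces $\Lambda$ by a self-adjoint operator with the same spectrum and the same domain, so that the usual self-adjoint duality argument applies.

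Third, I would estimate each summand in $\pHa{-3}$. For the convective term, a single integration by parts against $\phi\in\pHa3$ gives $\|(w\tilde y)_x\|_{\pHa{-3}} \le \|w\tilde y\|_{L^2} \le \|w\|_{L^\infty}\|\tilde y\|_{L^2}$, and the one-dimensional embedding $H^1 \hookrightarrow L^\infty$ together with Poincar\'e's inequality give $\|w\|_{L^\infty} \le C\|w\|_{\pHa1} \le C\rho$ and $\|\tilde y\|_{L^2} \le C\|\tilde y\|_{\pHa1}$. For the fourth-order term, moving two Laplacians onto the test function yields
\[
\big\|\Delta^2\!\big[(6u_0 w + 3w^2)\tilde y\big]\big\|_{\pHa{-3}} \le C\big\|(6u_0 w + 3w^2)\tilde y\big\|_{\pHa1},
\]
and the Banach-algebra property of $H^1(\bT)$ in one dimension then gives
\[
\|(6u_0 w + 3w^2)\tilde y\|_{H^1} \le C\big(\|u_0\|_{H^1}\|w\|_{H^1} + \|w\|_{H^1}^2\big)\|\tilde y\|_{H^1} \le C\rho\,\|\tilde y\|_{\pHa1},
\]
where only one factor of $\|w\|_{H^1} \le C\rho$ is needed to produce the desired linear decay in $\rho$.

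Combining the two $\pHa{-3}$-bounds, integrating in $s\in[0,1]$, and applying the duality estimate from the second step yields $\|\Lambda^{-1/2} G(x,\tilde y)\tilde y\|_{L^2} \le C\rho \|\tilde y\|_{\pHa1}$, which is the claim with $M = C\rho$. I expect the main obstacle to be the rigorous identification of the $\Lambda^{-1/2}$-norm with $\|\cdot\|_{\pHa{-3}}$ when $\Lambda$ is non-self-adjoint; every other step is a routine interpolation or one-dimensional Sobolev estimate.
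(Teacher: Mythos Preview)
Your argument is essentially correct and proceeds along the same lines as the paper: both proofs exploit the quadratic vanishing of $F_\delta(u_0+\cdot)-F_\delta(u_0)-DF_\delta(u_0)(\cdot)$ at the origin to extract a factor of $\rho$, and both use that $\Lambda^{-1/2}$ gains three derivatives to absorb the $\Delta^2$. The paper organises this via a second-order Taylor expansion of $g_i$ (writing $g_i$ as a double integral of $D^2F_\delta$ and then differentiating in $s$ to produce a triple integral with $D^2F_\delta$ and $D^3F_\delta$), whereas you compute $DF_\delta(u_0+w)-DF_\delta(u_0)$ explicitly using the cubic structure of $W$. Your route is slightly more elementary and makes the dependence on $w$ and $\tilde y$ transparent; the paper's route avoids any explicit formula for $W$.

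One correction: your identification $w^0=\pi\xi-u_0$ is not quite right. By the definition of $G$ via (\ref{eter5}) one has $w^0=x+h(x)$, which is the point of $\cE^{\delta,j}$ with the same $x$-coordinate as $\xi$ in the Hale--Raugel chart, \emph{not} the nearest-point projection $\pi\xi$. These two projections differ in general. The bound $\|w^s\|_{\pHa2}\le C\rho$ nevertheless holds, since $u_0+x+h(x)\in U_{\rho,\eta}(u_0)\cap\cE^{\delta,j}$ and $\|x+y\|_{\pHa2}\le\|x+h(x)\|_{\pHa2}+\|\tilde y\|_{\pHa2}\le C\rho+\eta$; this is also where the paper invokes Lemma~\ref{l0}. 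Finally, the paper handles the non-self-adjointness of $\Lambda$ simply by noting that $P_1,P_2$ commute with $\Lambda$ (they are spectral projections for $\cL^\delta$) rather than via Proposition~\ref{p2.18}; either device suffices for your duality step.
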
Note that the $x$-dependence in the estimate of Lemma \ref{lete1}  is now
multiplicative (if we estimate $g_1$ and $g_2$ in \eqref{eter3} the dependence
on $x$ is additive, although the growth in $y$ is quadratic).

\begin{proof}
Let us write $x+ h$ for $(x, h)$, where $x\in X,$ $h \in Y_1\oplus Y_2$. We recall that
$$
g_i(x, h) = P_i (F_\delta(u_0+x+h) - F_\delta(u_0)- DF_\delta(u_0)(x+h))
= P_i \int_0^1 \int_0^1 s D^2 F_\delta(u_0+s\sigma (x+h))(h,h)\,dsd\sigma, \qquad i=1,2.
$$
Since the nonlinearity $F_\delta$ is cubic,  $D^2F_\delta$ is linear.  Thus, $G(x,\tilde y)\tilde y$ takes the following form,
\begin{align*}
G(x,\tilde y)\tilde y= 
(P_1+P_2) & \int_0^1 \int_0^1 \int_0^1 s r 
\left(D^3F_\delta (u_0+s\sigma (x+h(x)+r\tilde y) )(x+h(x)+r\tilde y, x+h(x)+r\tilde y, \tilde y ) \right. \\ &+ \left.
2 D^2 F_\delta(u_0+s\sigma (x+h(x)+r\tilde y) )(x+h(x)+\tilde y, \tilde y\right) \,dsd\sigma dr.
\end{align*}
By  definition the projections $P_1$ and $P_2$ commute with $\Lambda$. Thus, if we take into account the form of $G(x,\tilde y)\tilde y$, then we reach
$$
\| \Lambda^{-1/2} G(x, \tilde y ) \tilde y \|_{L^2}
 \le C\|y \|_{\pHa{1}}  ( \|x \|_{\pHa1} + \|\tilde y \|_{\pHa1}),
$$
where $C$ is a universal constant.
  
Since $\|x \|_{\pHa2}\le \rho$ and $\|\tilde y \|_{\pHa2} \le \eta$, we reach our claim. Here, however, we use the  observation below, see Lemma \ref{l0}. 
Note that we have already taken $\eta = \rho^\beta$, where $\beta>1$, and $\rho <\overline{\rho}$ is smaller than one (cf. the hypothesis needed for the waiting time estimate, Proposition \ref{PNwait}).
\end{proof}

A technical difficulty arises from the fact that each connected component of our set of equilibria is not flat. As a direct consequence the Hale-Raugel decomposition at any point $u_0\in\SE^{\delta,j}$ is not expected to be orthogonal.
This brings into question the amount of control we have on the orthogonal distance. The following lemma deals with this issue.

\begin{lemma}\label{l0}
Let $u_0\in \SE^{\delta,j}$. There exists a Hale-Raugel neighbourhood $V$ \eqref{eter0} of $u_0$ corresponding to the $X\oplus Y_1\oplus Y_2$ decomposition of $Z_2=\pHa2$ such that
\begin{equation*}
 \dist_{{\pHa2}} (\xi_\delta, \cE^{\delta,j}) \le \|\tilde y \|_{{\pHa2}} \le c\dist_{{\pHa2}} (\xi_\delta, \cE^{\delta,j})
\,,
\end{equation*}
where $(x,y_1,y_2)$ are the Hale-Raugel coordinates in $V$, $\xi_\delta = (x,y_1,y_2)$, $\tilde y = y - h(x)$, $h(x)$ is such that $(x,h(x))\in\SE^{\delta,j}$ in $V$ and $c>0$ is a universal constant. 
\end{lemma}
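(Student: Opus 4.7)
The plan is to use the Hale-Raugel decomposition together with the geometric fact that the tangent space of $\cE^{\delta,j}$ at $u_0$ coincides with $X$, so the graphing function $h$ satisfies $Dh(0)=0$. The lower inequality (the easy direction) is essentially tautological: since $(x,h(x))\in V\cap\cE^{\delta,j}$ and $\xi_\delta-(x,h(x))=\tilde y$ as elements of $Z_2$, we get
\[
\dist_{\pHa2}(\xi_\delta,\cE^{\delta,j})\le \|\xi_\delta-(x,h(x))\|_{\pHa2}=\|\tilde y\|_{\pHa2}.
\]

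For the reverse bound, recall that the projections $P_0,P_1,P_2$ associated with the splitting $Z_2=X\oplus Y_1\oplus Y_2$ are bounded on $\pHa2$, and that $X=\ker\cL^\delta_{u_0}$ is spanned by $u_{0,x}$, which is precisely the tangent vector to $\cE^{\delta,j}$ at $u_0$; hence $T_{u_0}\cE^{\delta,j}=X$ and the graphing map $h:X\to Y_1\oplus Y_2$ representing $\cE^{\delta,j}$ near $u_0$ is $C^1$ with $Dh(0)=0$. Shrinking the neighbourhood $V$ if necessary, I may assume that $\|Dh(\xi)\|_{\rm op}\le\varepsilon$ for all relevant $\xi$, so $h$ is $\varepsilon$-Lipschitz on the chart.

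Pick the nearest point $\bar u\in\cE^{\delta,j}$ to $\xi_\delta$, whose existence and uniqueness near $u_0$ follow from compactness of $\cE^{\delta,j}$ and the nearest-point projection $\pi$ of Lemma \ref{l5.11}. Shrinking $V$ (and correspondingly $\eta$) further guarantees that $\bar u=\pi(\xi_\delta)$ lies in the same chart, so I may write $\bar u=(\bar x,h(\bar x))$. Then the triangle inequality and boundedness of $P_0$, $P_1+P_2$ give
\[
\|\tilde y\|_{\pHa2}\le \|y-h(\bar x)\|_{\pHa2}+\|h(\bar x)-h(x)\|_{\pHa2}
\le C\|\xi_\delta-\bar u\|_{\pHa2}+\varepsilon C\|\xi_\delta-\bar u\|_{\pHa2},
\]
where the first term estimates $(P_1+P_2)(\xi_\delta-\bar u)$ and the second uses Lipschitzness of $h$ together with $\bar x-x=P_0(\bar u-\xi_\delta)$. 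Since $\|\xi_\delta-\bar u\|_{\pHa2}=\dist_{\pHa2}(\xi_\delta,\cE^{\delta,j})$, the claim follows with $c=C(1+\varepsilon)$.

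The main obstacle is the non-orthogonality of the Hale-Raugel splitting: one cannot simply invoke Pythagoras, so the triangle inequality must be combined with the non-trivial geometric input $Dh(0)=0$ to beat the skew angle between $X$ and $Y_1\oplus Y_2$. A secondary technical point is to ensure that the global nearest point $\bar u$ still lies in the coordinate chart; this is handled by compactness of $\cE^{\delta,j}$ together with Lemma \ref{l5.11}, which confines the nearest-point projection to a tubular neighbourhood once $V$ is chosen small enough.
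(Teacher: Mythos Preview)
Your proof is correct and takes a cleaner, genuinely different route from the paper. The paper proceeds by introducing the orthogonal complement $N=\{e\}^\perp$ of $X$ in $Z_2$, proving the ratio bound $d/\|y\|>\alpha_0$ first on $N$ via an algebraic identity ($d^2=\|y\|^2-x^2$) and a compactness argument on the unit sphere, then extending the inequality off $N$ by continuity (with a case split $N=Y$ versus $N\neq Y$). Your argument bypasses all of this: you use directly that $T_{u_0}\cE^{\delta,j}=X$ forces $Dh(0)=0$, hence $h$ is $\varepsilon$-Lipschitz on a small chart, and then a two-line triangle inequality together with boundedness of the (non-orthogonal) projections $P_0$ and $P_1+P_2$ closes the estimate. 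Your approach is more transparent about where the constant comes from (projection norms times $1+\varepsilon$) and avoids the somewhat delicate continuity extension; the paper's approach, on the other hand, does not explicitly invoke the nearest-point projection $\bar u$ lying in the same chart, which you correctly flag and handle via Lemma~\ref{l5.11}.
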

\begin{proof} 
As before, let us omit the $\delta, j$ superscripts on $\cE$ and $\delta$ subscript on $\xi$. 
First, note that the endpoints of the vector $\tilde y$ are $\xi$ and $(x,h(x))\in \cE$. Hence, by the definition of $\dist_{\pHa2} (\xi, \cE)$ we have
$$
\|\tilde y \|_{{\pHa2}} \ge \dist_{{\pHa2}} (\xi, \cE).
$$
It remains to show that $\dist_{\pHa2}(\xi, \cE)$ is bounded from below by $c\|\tilde y\|_{\pHa2}$.

The space $X$ is spanned by $e$, the normalised (in $Z_2$) derivative of $u_0$. We avoid on purpose naming this variable in order to avoid a clash of notation. Since the decomposition $X\oplus Y$ need not be orthogonal, we introduce the normal space $N =\{ e\}^\perp$. We now consider the following dichotomy:\\ (1) $N = Y$, then for $y\in Y$ we have $\| y\| =\dist((0,y)+u_0, \cE)$; and\\ (2) $N\neq Y.$

Let us first consider case (2).
Then we have
\begin{equation}
\text{dim }( ( N\cap Y
)^\perp) = 2
\,.
\label{eq2}
\end{equation}
Indeed, since $N \ne 
{Y}$, there is a non-zero vector $w$ perpendicular to $Y$ such that the set $\{w, e\}$ is linearly independent.

We notice that any  $z\in N+ u_0$ may represented as follows with the help of the Hale-Raugel coordinates, 
\[
xe + y  = z - u_0 , 
\]
where we use the  above notation and keep in mind that  $x$ is a scalar. 
We may take the inner product in  $Z_2$ (denoted by $\cdot$) of both sides with $e$. Taking into account that $z-u_0\in 
{N}$ yields
\begin{equation}
\label{eq3}
x + y\cdot e = 0\,.
\end{equation}
Let us use $d$ to denote the distance from $z$ to $\SE$. Then
\[
    d^2 = ||z-u_0||^2
        = ||xe + y||^2
        = x^2 + ||y||^2 + 2x e\cdot y\,.
\]
Using now \eqref{eq3} we have
\begin{equation}\label{eq5-20p}
    d^2 = ||y||^2 - x^2
\,.
\end{equation}
We claim that there exists an $\alpha_0>0$ such that
\begin{equation}
\label{eqOnN}
\frac{d}{||y||} > \alpha_0\qquad\text{on $N$}\,.
\end{equation}
This estimate is invariant under scaling, so we may restrict our attention to the unit sphere in $N$, i.e. we have to prove that
$$
\frac 1{\|y\|} >\alpha_0>0
$$
for $y\in N$ belonging to the unit sphere. As the sphere is a bounded set this claim clearly holds.

Therefore we have \eqref{eqOnN} on $N$. It remains to extend this inequality off $N$, and in doing so it will be essential that we replace $y$ by $\tilde y = y - h(x)$. 
For points $(x, y)\in Y\cap N$, we have $h(x) =0$, hence $\tilde y = y$.
Thus, by the definitions of the object involved in the formula below, we have
\begin{equation}
\label{eq_onN}
\alpha_0 < \frac{d}{||\tilde y||} = 
\frac{\dist_{Z_2}((x,y) + u_0, \cE)}{\| y - h(x)\|},\qquad (x,y) \in Y\cap N.
\end{equation}
Note that the projections $P_0$, $P_1$, $P_2$, the map
$(x,y)\mapsto \tilde y$, the norm and the distance are all continuous.
Using this continuity, we may extend inequality \eqref{eq_onN} to the estimate
\begin{equation}\label{eq5-23}
\frac{\dist_{Z_2}((x,y) + u_0, \cE)}{\| y - h(x)\|} > \frac{\alpha_0}{2}.
\end{equation}
in a neighbourhood of $u_0$.
This finishes the proof in case (2). 

In the first case where $N=Y$, the estimate \eqref{eqOnN} again holds with {any} constant $\alpha_0$ {less than} 1. The argument proceeds from this point exactly as in case (2) above.
\end{proof}
Now, we complete the estimate that will give us the control we need on the waiting time $\Delta t$.
The constant variation formula applied to (\ref{eter4}) and each component of $\tilde y$ separately yields
\begin{equation}\label{eter6}
 \tilde y_1(t) = e^{\cB_1 (t-t_0)} \tilde y_1(t_0) + 
 \int_{t_0}^t e^{\cB_1 (s-t_0)} (G(x, \tilde y ) \tilde y)_1(s)\,ds\,.
\end{equation}
We recall that $\Re \sigma(\cB_1)< -\lambda_0 <0$, then (\ref{eter6}) and Lemma \ref{lete1} yield,
\begin{equation}\label{eter7}
 \| \tilde  y_1(t) \|_{\pHa2} \le M e^{-\lambda_0(t-t_0)} \| \tilde  y_1(t_0) \|_{\pHa2}
 + M \int_{t_0}^t \frac{e^{-\lambda_0(s-t_0)} }{ (s - t_0)^{1/2}}
 \| \tilde y \|_{\pHa2}(\| \tilde y \|_{\pHa1} + \| x \|_{\pHa1})\,ds\,. 
 \end{equation}
Due to the choice of $\eta$ made in Proposition \ref{PNwait}, we have
$$
\| \tilde y \|_{\pHa2} + \| x \|_{\pHa2} \le \rho + \eta \le 2 \rho
\,.
$$
We further restrict here $\rho$ by requiring that
\begin{equation}\label{eter9}
 2 M \rho \int_{0}^\infty \frac{e^{-\lambda_0 s} }{ s ^{1/2}}\,ds < \frac 18.
\end{equation}
Since (due to Lemma \ref{l0}) $\tilde y_1$ is comparable with $\eta$, and we can assume that $\| \tilde y_1\|_{\pHa2} \le \rho^{\beta}$. Thus, (\ref{eter9}) yields
\begin{equation}\label{eter10}
 \| \tilde y (t)\|_{\pHa2} \le  M e^{-\lambda_0(t-t_0)} \rho^{\beta}
 + \frac 18 \rho^{\beta}
\end{equation}
We notice that for large $t$ inequality (\ref{eter10}) gives us an improvement of the estimate
o $\| \tilde y (t)\|_{\pHa2}$. If we pick $t_c$ such that
$$
M e^{-\lambda_0 t_c} = \frac 18\,,
$$
that is
\begin{equation}\label{eter11}
 t_c = \frac{\ln(8M)}{\ln \lambda_0}\,;
\end{equation}
then we obtain that
\begin{equation}\label{eter12}
\| \tilde y (t)\|_{\pHa2} \le \frac 14 \rho^{\beta}\qquad\hbox{for }t> t_0 +t_c\,.
\end{equation}
However, we have to know that the interval 
$(t_0 + t_c, t_1)$ is not empty. We may compare $t_c$ and $\Delta t$ using the following estimate:
$$
\Delta t \ge C \rho^{1-\beta} >  t_c = \frac{\ln(8M)}{\ln \lambda_0}\,.
$$
This is another restriction on the size of $\rho$.

We will now estimate $\| \tilde y_2 (t)\|_{\pHa2}$.
Using again the constant variation formula, we find
\begin{equation}\label{eter6a}
 \tilde y_2(t) = e^{\cB_2 (t-t_0)} \tilde y_2(t_1) + 
 \int_{t_1}^t e^{\cB_1 (s-t_1)} (G(x, \tilde y ) \tilde y)_2(s)\,ds\,.
\end{equation}
This time, however, time will run backward. We may switch the direction of time here because $Y_2$ is finite dimensional. Moreover, $\Re \sigma(\cB_2)\ge \mu_0>0$, hence
estimating the norm of both sides gives us
\begin{equation}\label{eter7a}
 \| \tilde  y_2(t) \|_{\pHa2} \le M e^{\mu_0(t-t_1)} \| \tilde  y_1(t_1) \|_{\pHa2}
 + M \int_{t}^{t_1} \frac{e^{-\mu_0(t_0-s)} }{ (t_1 - s)^{1/2}}
 \| \tilde y \|_{\pHa2}(\| \tilde y \|_{\pHa1} + \| x \|_{\pHa1})\,ds. 
 \end{equation}
Arguing like before we arrive at
\begin{equation}\label{eter10a}
 \| \tilde y_2 (t)\|_{\pHa2} \le  M e^{-\mu_0(t_1-t)} 
 + \frac 18 \rho^{\beta}
\end{equation}
for $t\in (t_0, t_1)$. In our estimates we may assume that $\mu_0 = - \lambda_0$, then for $t< t_1 - t_c$ inequality (\ref{eter10a}) gives us an improvement of the estimate of  $\| \tilde y_2 (t)\|_{\pHa2}$,

Now, for $t\in (t_0 +t_c, t_1 -t_c)$ we obtain a simultaneous improvement of $\| \tilde y_i (t)\|_{\pHa2}$, $i=1,2$,
\begin{equation}\label{eter13}
 \| \tilde y(t)\|_{\pHa2} \le \frac 12 \rho^{\beta}\qquad\hbox{for }t\in( t_0 +t_c, t_1- t_c).
\end{equation}
We notice that due to our choice of $\Delta t $ and $t_c$ the interval $( t_0 +t_c, t_1- t_c)$ is not empty.

Finally, we prove the main result {of this section}.

\begin{proof}[Proof of Theorem \ref{TMgap}]
We shall apply our work above with a certain choice of the endpoints $t_0$ and $t_1$.

Let us suppose that there exists $t_*$ such that 
\begin{equation}\label{eter-f}
\dist_{\pHa2}(\xi_\delta(t_*),\cE^{\delta,j})> \frac
34 \rho^\beta. 
\end{equation}
If there is no such $t_*$, 
then we may further restrict $\rho$ and $\eta$
through Proposition \ref{PNwait}
and perform this argument again.

We position our Hale-Raugel neighborhood $U_{\rho, \rho^\beta}(u_0)$ in a such a way
that 
$\xi_\delta(t_*)\in U_{\rho, \rho^\beta}(u_0)$ and 
also $t_*\in (t_0 +t_c, t_1- t_c)$.
However, the improvements on the distance $\dist(\xi_\delta, \cE^{\delta,j})$ we showed in (\ref{eter13}) contradict (\ref{eter-f}). 

It remains to deal with the left out case of  $j=0$ corresponding to the isolated equilibrium consisting of the single point $\{u_0\}$ where $u_0(x)=0$. In this situation, we may conduct the same proof just after the choice of a sufficiently small neighborhood of $u_0$ yielded by Proposition \ref{PNwait}.
Our claim follows.
\end{proof}

\subsection{Proof of the main result}
\label{S52}
When we set $\delta =0$, then we have already noticed that (\ref{eqn:hcch}) is a gradient flow of the functional $\cF$ (defined in (\ref{eqncF})) with respect to the $(\dot H^2_{per})^*$ topology. In particular, there are no homoclinic orbits and eternal solution connect critical points of $\cF$, which are sets $\cE^i$, $i=1,\ldots, k.$

In subsection \ref{s5.1} we verified assumptions (a)--(d) of Theorem 
\ref{thmRob} in relation to invariant sets of \GW{the} semigroup $S_\delta$. Thus, there exist\GW{s} $\delta_2>0$  such that for all $\delta\in (0, \delta_2)$ equation (\ref{eqn:hcch}) is a gradient flow with respect to the family of invariant sets $\cS^\delta$, $\delta\in (0,\GW{\delta_2})$. In particular, $\cS^\delta$ does not contain any homoclinic orbits. In other words, $S_\delta$ is a gradient semigroup. As a result we may apply Theorem \ref{TH-R} to $S_\delta$. We have already checked assumptions  (H.1)--(H.4). We \GW{need} to know that if $z_0$ is a steady state, then $\omega(z_0)$ consists only of the steady states of (\ref{eqn:hcch}). But Theorem \ref{thmRob} implies that $\omega(z_0)\subset \cS^\delta$. Since $\cS^\delta$ does not contain homoclinic structures, we deduce that $\omega(z_0)$ contains only equilibria, i.e. 
\begin{equation}\label{limit}
    \lim_{t\to\infty} u(t) = \phi\in Z_{\PR{2}}\qquad \hbox{in the }Z_{\PR{2}}\hbox{ topology}.
\end{equation}
Now, we pick any $k\in\bN$, $k>0$. We see that
$$
\| (-\Delta)^{k/2}(u(t) - \phi)\|_{L^2}^2 = \int_\bT (-\Delta)^{k/2}(u(t) - \phi) \cdot (-\Delta)^{k/2}(u(t) - \phi) \,dx =
- \int_\bT (-\Delta)^{k}(u(t) - \phi) \cdot(u(t) - \phi)\,dx ,
$$
where we performed one integration by parts. Furthermore,
$$
\| (-\Delta)^{k/2}(u(t) - \phi)\|_{L^2}^2\le 
\| (-\Delta)^{k}(u(t) - \phi)\|_{L^2} \cdot \|u(t) - \phi\|_{L^2}.
$$
If we combine this with (\ref{limit}) and 
(\ref{eq-unib})
then we deduce that $u(t)$ converges to $\phi$ in the $\pHa{k}$ topology.
Our claim follows. 

\subsection*{Acknowledgements}
The authors thank the sponsoring institutions for their support. PR is grateful to the University of Wollongong for creating a very rewarding working environment, especially through the VISA and Distinguished Visitor Programs. A big part of work on this project was done at UoW.
The work of PR was also in part supported by the National Science Centre, Poland, through the grant  2017/26/M/ST1/00700. GW was in part supported by the National Science Centre, Poland, trough the grant 2015/19/P/ST1/02618 during his visit to the Banach Center, Warsaw, where a part of work was done.
GW was additionally partially supported by ARC grant DP150100375 and the Institute for Mathematics and its Applications.
The authors are grateful for this sustained support from a number of sources.

Special thanks go to our colleagues. We are grateful to Grzegorz Lukaszewicz for bringing \cite{robinson} 
to our attention. We thank Henryk Zol{\c a}dek for stimulating discussion on the monotonicity of the period function. 
Galina Pilipuk was so kind to perform the Mathematica computations for us.

\end{document}